\newtheorem{theorem}{Theorem}[section]
\newtheorem{lemma}{Lemma}
\newtheorem{assumption}{Assumption}
\newtheorem{corollary}{Corollary}
\theoremstyle{definition}
\theoremstyle{remark}
\newtheorem{remark}{Remark}
\numberwithin{equation}{section}
\crefname{theorem}{Theorem}{Theorems}
\crefname{lemma}{Lemma}{Lemmas}
\crefname{figure}{Fig.}{Figs.}
\crefname{example}{Example}{Examples}
\crefname{remark}{Remark}{Remarks}
\crefname{table}{Table}{Tables}
\crefname{assumption}{Assumption}{Assumptions}
\crefname{section}{Section}{Sections}
\begin{document}

\title[An EMA-conserving, pressure-robust and Re-semi-robust method]{An EMA-conserving, pressure-robust and Re-semi-robust reconstruction method for the unsteady incompressible Navier-Stokes equations}
\author{Xu Li}
\address{School of Mathematics, Shandong University, Jinan 250100, China}
\email{xulisdu@126.com}
\thanks{The second author is the corresponding author.}

\author{Hongxing Rui}
\address{School of Mathematics, Shandong University, Jinan 250100, China}
\email{hxrui@sdu.edu.cn}
\thanks{This work was supported by the National Natural Science Foundation of China grant 12131014.}

\subjclass[2020]{65M12, 65M15, 65M60, 76D05, 76D17}



\keywords{Finite element methods, unsteady Navier-Stokes equations, pressure-robustness, EMAC formulation, $Re$-semi-robustness}

\begin{abstract}
Proper EMA-balance (E: kinetic energy; M: momentum; A: angular momentum), pressure-robustness and $Re$-semi-robustness ($Re$: Reynolds number) are three important properties of Navier-Stokes simulations with exactly divergence-free elements. This EMA-balance makes a method conserve kinetic energy, linear momentum and angular momentum under some suitable senses; pressure-robustness means that the velocity errors are independent of the continuous pressure; $Re$-semi-robustness means that the constants appearing in the error bounds of kinetic and dissipation energies do not explicitly depend on inverse powers of the viscosity. In this paper, based on the pressure-robust reconstruction methods in [{A. Linke and C. Merdon, {\it Comput. Methods Appl. Mech.
Engrg.} 311 (2016), 304-326}], we propose a novel reconstruction method for a class of non-divergence-free simplicial elements which admits almost all the above properties. The only exception is the energy balance, where kinetic energy should be replaced by a properly redefined discrete energy. Some numerical comparisons with exactly divergence-free methods, pressure-robust reconstructions and the EMAC scheme are provided to confirm our theoretical results.
\end{abstract}

\maketitle

\section{Introduction}
\label{sec:introduction}
In this paper, we study the finite element methods for the unsteady Navier-Stokes equations (NSEs):
\begin{subequations}\label{NSE}
\begin{align}
\boldsymbol{u}_{t}-\nu\Delta\boldsymbol{u}+\left(\boldsymbol{u}\cdot\nabla\right)\boldsymbol{u}+\nabla p&=\boldsymbol{f}&\text{in} ~J\times\Omega,\label{NSE1}\\
\nabla\cdot\boldsymbol{u}&=0 &\text{in} ~J\times\Omega,\label{NSE2}\\
\boldsymbol{u}\left(0\right)&=\boldsymbol{u}^{0} &\text{in} ~\Omega,\label{NSE3}\\
\boldsymbol{u}&=\boldsymbol{0}&\text{on} ~J\times\Gamma,\label{NSE4}
\end{align}
\end{subequations}
where $J=\left(0,T\right]$ and $\Omega\subset \mathbb{R}^{d}$ $\left(d=2,3\right)$ is a bounded domain with Lipschitz-continuous polyhedral boundary $\Gamma$; $\boldsymbol{u}$ and $p$ represent the unknown velocity and pressure, respectively; $\nu>0$ is the constant kinematic viscosity; $\boldsymbol{f}$ represents the external force and $\boldsymbol{u}^{0}$ is the initial velocity. We assume $\boldsymbol{f}\left(t\right)\in \boldsymbol{L}^{2}\left(\Omega\right)$ for all $t\in J$. For simplicity, here we only consider the homogeneous Dirichlet boundary condition. Other boundary conditions are also of interest.

For designing accurate numerical schemes, it is widely believed that preserving the fundamental (physical or mathematical) properties of the continuous problem is of great importance. For the unsteady incompressible Navier-Stokes equations, these fundamental properties include the divergence constraint \cref{NSE2}, the balance laws for some physical quantities (e.g., kinetic energy, linear momentum, angular momentum, vorticity, helicity and enstrophy) \cite{Evans20132,Rebholz2017}, an invariance property for the velocity with respect to the gradient field in $\boldsymbol{f}$ \cite{Linke2014on,john_divergence_2017} and so on. Among these properties, the divergence constraint is of central importance. The papers \cite{Evans20132,john_divergence_2017} respectively showed that the exactly divergence-free mixed methods preserved the balance laws and the invariance property mentioned above. The latter means that these methods are pressure-robust; namely the velocity errors are independent of the pressure. Moreover, it was demonstrated in \cite{schroeder_towards_2018} that the constants in error estimates, including the Gronwall constant, did not depend on $\nu^{-1}$ explicitly for divergence-free finite element methods. This property was called ($Re$-)semi-robustness, uniform, or quasi-uniform estimates \cite{john_finite_2018} sometimes.

Due to these fascinating properties, constructing exactly divergence-free elements has been an increasingly hot topic in recent years \cite{Zhang2005,guzman_conforming_2013,Neilan2014,Neilan2015,Neilan2018,christiansen_generalized_2018,Neilan2021}. However, compared to classical non-divergence-free elements (e.g., Taylor-Hood, MINI and Bernardi-Raugel, cf. \cite{girault_finite_1986,John2016}), the construction of these elements is not trivial in most cases.
Another popular idea is relaxing the continuity condition but enforcing the divergence constraint strongly, which results in the so-called nonconforming $H(\operatorname{div})$-conforming methods \cite{Cockburn2007,wang_new_2007,wang2008,konno2011,johnny_family_2012,Rhebergen2020}. In this paper, we focus on the conforming mixed methods for the Navier-Stokes equations.

Modifying the formulation to preserve some fundamental properties of the continuous problem (or divergence-free mixed methods) for non-divergence-free elements is another popular research topic. With an observation that most classical elements are non-divergence-free, {this topic includes} (not just) pressure-robust reconstructions \cite{Linke2014on,linke_robust_2016,linke2016,Lederer2016,Linke2017}, the EMAC (EMA-conserving) formulation \cite{Rebholz2017,EMAC2019,Rebholz2020} and some $Re$-semi-robust methods \cite{graddiv2018,CIP2007,LPS2019,SPS2021}.
The method introduced in this paper is also included in this topic. Pressure-robustness plays an important role on the accuracy of a method when {\it `gradient forces dominate the momentum balance'} \cite{linke2019jcp}; the velocity errors of the methods which are not $Re$-semi-robust might grow quickly with respect to time for higher Reynolds number flows \cite{schroeder_towards_2018}; the EMAC scheme is one of the ``enhanced-physics" based schemes which have a long history such as \cite{Arakawa1966,Fix1975,Abramov2003,Rebholz2007,Palha2017}, and the paper \cite{Rebholz2020} showed that an improper treatment of energy, linear momentum and angular momentum produced lower bounds for $L^{2}$ velocity errors. It is worth mentioning that, the properties mentioned above are usually not mutually independent. For example, in the paper \cite{Rebholz2020}, Olshanskii and Rebholz proved that the Gronwall constants in EMAC estimates did not depend on the viscosity explicitly, which is exactly $Re$-semi-robustness except that the constant in the pressure-induced error polynomially depended on the inverse of viscosity. Another example is the popular grad-div stabilization \cite{Olshanskii2004,Linke2011}. It can not only weaken (not totally remove) the impact of the pressure on velocity errors \cite{john_divergence_2017} but also make the usual skew-symmetric scheme semi-robust with respect to the Reynolds number \cite{graddiv2018}. Finally, we also refer the readers to the review article \cite{john_finite_2018} for more details.

In this paper, we propose a novel reconstruction formulation which is EMA-conserving for the reconstructed discrete velocity (here the energy should be redefined), pressure-robust and $Re$-semi-robust. For simplicity, we shall refer to this reconstruction as the ``EMAPR" reconstruction throughout this paper. Our method is based on the pressure-robust reconstruction formulation in \cite{linke2016}. The main difference lies on the discretization of the convective term. For the convective term, two (pressure-robust) discrete forms were proposed in \cite{linke2016}: the convective form and the rotational form. Similarly to \cite{Rebholz2017}, it can be checked that the two forms do not conserve the linear momentum and angular momentum (the latter conserves kinetic energy). Here we propose an EMA-conserving form, i.e., it does not produce any extra energy, momentum and angular momentum under some appropriate senses. Then we give a pressure-robust and $Re$-semi-robust error estimate for the continuous-in-time case, provided that the continuous solution $\boldsymbol{u}$ is in $L^{2}\left(J;\boldsymbol{W}^{1,\infty}\left(\Omega\right)\right)$. To obtain such an estimate, we also need to slightly modify the discretization of the evolutionary term by introducing a stabilization. Finally, we shall prove that our formulation could be easily applied to a class of simplicial locally mass-conserving elements whose pressures are discontinuous. To the best of our knowledge, the EMAPR reconstruction is the first method on conforming non-divergence-free elements which is EMA-conserving, pressure-robust and $Re$-semi-robust simultaneously. For nonconforming and non-divergence-free methods, a reconstructed Hybrid discontinuous Galerkin method in \cite{lederer_hybrid_2019} (see formulas ``(6.3d)" and ``(6.5)" in it) probably admits most of these properties also.

The remainder of this paper is organized as follows. In \cref{sec:2} we discuss the EMAPR methods and some balance laws. \Cref{sec:3} is devoted to giving a pressure-robust and $Re$-semi-robust error estimate for the EMAPR method. We show that a class of locally divergence-free elements (include the Bernardi-Raugel element) could be easily incorporated into our framework in \cref{sec:4}. Finally we carry out some numerical experiments in \cref{sec:5}.

In what follows we will use $C$, with or without a subscript, to denote a generic positive constant. The standard inner product for $\left[L^{2}\left(D\right)\right]^{n}$ or $\left[L^{2}\left(D\right)\right]^{n\times n}$ ($n\in\mathbb{Z}$) will be denoted by $\left(\cdot,\cdot\right)_{D}$ uniformly. The notation $\left\|\cdot\right\|_{m,p,D}$ ($\left|\cdot\right|_{m,p,D}$) will be used to denote the Sobolev norm (seminorm, respectively) of $\left[W^{m,p}\left(D\right)\right]^{n}$ or $\left[W^{m,p}\left(D\right)\right]^{n\times n}$. With the convention the subscripts $m,p$ and $D$ will be omitted for $m=0$, $p=2$ and $D=\Omega$, respectively. $H^{m}\left(D\right)$ coincides with $W^{m,2}\left(D\right)$ and $\boldsymbol{W}^{m,p}\left(D\right)$ coincides with $\left[{W}^{m,p}\left(D\right)\right]^{d}$.
\section{The EMAPR reconstruction method}
\label{sec:2}
\subsection{The divergence-free reconstruction operator}
\label{sec:21}
Let $\mathcal{T}_{h}$ denote a partition of $\Omega$. We define the mesh size $h:=\max_{K\in\mathcal{T}_{h}}h_{K}$ with $h_{K}$ the diameter of elements $K$. Denote by $\varrho_{K}$ the diameter of the biggest ball inscribed in $K$. Here we assume that $\mathcal{T}_{h}$ is shape-regular \cite{Ciarlet2002The}, i.e., there exists a positive constant $\xi$ such that
\begin{equation}\label{shaperegularity}
\frac{h_{K}}{\varrho_{K}}\leq \xi \quad \forall~K\in\mathcal{T}_{h}.
\end{equation}
Introduce
$$V=\boldsymbol{H}^{1}_{0}\left(\Omega\right):=\left\{\boldsymbol{v}\in \boldsymbol{H}^{1}\left(\Omega\right): \boldsymbol{v}|_{\Gamma}=\boldsymbol{0}\right\},$$
$$X=\boldsymbol{H}_{0}\left(\operatorname{div};\Omega\right):=\left\{\boldsymbol{v}\in \boldsymbol{H}\left(\operatorname{div};\Omega\right): \boldsymbol{v}\cdot\boldsymbol{n}\right|_{\Gamma}=0\},$$
$$\boldsymbol{H}^{2}\left(\mathcal{T}_{h}\right):=\left\{\boldsymbol{v}\in \boldsymbol{L}^{2}\left(\Omega\right): \boldsymbol{v}|_{K}\in \boldsymbol{H}^{2}\left(K\right) \quad \forall~ K\in\mathcal{T}_{h}\right\},$$
and
$$W=L_{0}^{2}\left(\Omega\right):=\left\{q\in L^{2}\left(\Omega\right): \int_{\Omega}q~d\boldsymbol{x}=0\right\},$$
where $\boldsymbol{n}$ is the unit external normal vector on $\Gamma$.
Furthermore we define the bilinear form $b: X\times W\rightarrow \mathbb{R}$ by
\begin{displaymath}
b\left(\boldsymbol{v},q\right):=\left(\nabla\cdot\boldsymbol{v},q\right)\ \text{ for any } \ \left(\boldsymbol{v},q\right)\in X\times W.
\end{displaymath}
Let $\left(V_{h},X_{h},W_{h}\right)\subset \left(V,X,W\right)$ denotes a triple of finite element spaces satisfying
\begin{equation}\label{infsupcondition}
\inf_{q_{h}\in W_{h}}\sup_{\boldsymbol{v}_{h}\in V_{h}\setminus\left\{\boldsymbol{0}\right\}}\frac{b\left(\boldsymbol{v}_{h},q_{h}\right)}{\left\|\nabla\boldsymbol{v}\right\|}\geq \beta_{h}\left\|q_{h}\right\| \text { for some } \beta_{h}>0,
\end{equation}
and
\begin{equation}\label{divfreeinclude}
\nabla\cdot X_{h}\subseteq W_{h}.
\end{equation}
\begin{remark}
For simplicity, throughout this article we assume that  $\boldsymbol{u}\notin V_{h}$ for the true solution $\boldsymbol{u}$. This assumption does not influence the construction of the method, but will be beneficial to simplifying notation in error estimates and highlighting the fundamental ideas. For a more general velocity, there is no extra essential difficulty for analysis.
\end{remark}

Denote by
\begin{displaymath}
V^{0}:=\left\{\boldsymbol{v}\in V: b\left(\boldsymbol{v},q\right)=0 \quad\forall ~q\in W\right\}=\left\{\boldsymbol{v}\in V: \nabla\cdot\boldsymbol{v}=0\right\},
\end{displaymath}
and
\begin{displaymath}
V_{h}^{0}:=\left\{\boldsymbol{v}_{h}\in V_{h}: b\left(\boldsymbol{v}_{h},q_{h}\right)=0 \quad\forall ~q_{h}\in W_{h}\right\},
\end{displaymath}
the spaces of divergence-free velocity functions and discretely divergence-free velocity functions, respectively.
Note that if $\nabla\cdot V_{h} \subseteq W_{h}$, we have $V_{h}^{0}\subset V^{0}$, which means that the functions in $V_{h}^{0}$ are exactly divergence-free. For most classical elements, this relationship does not hold.

Let $P^{k}\left(K\right)$ denotes the space of polynomials on $K$ of degree no more than $k$. We also define
\begin{displaymath}
\boldsymbol{P}^{k}:=\left\{\boldsymbol{v}_{h}\in V: \boldsymbol{v}_{h}|_{K}\in \left[P^{k}(K)\right]^{d}\text{ for all } K\in\mathcal{T}_{h}\right\}.
\end{displaymath}
We suppose that the velocity space $V_{h}$ is of order $k$ ($k\geq 1$), i.e., there exists a non-negative integer $k$ such that $\boldsymbol{P}^{k}\subset V_{h}$ and $\boldsymbol{P}^{k+1}\not\subset V_{h}$.

We introduce the divergence-free reconstruction operator $\Pi_{h}: V_{h}\rightarrow X_{h}$ (we do not give the concrete definition here) which satisfies that
\begin{equation}\label{divfreedefinition}\nabla\cdot\Pi_{h}\boldsymbol{v}_{h}\equiv 0  \text{ for all } \boldsymbol{v}_{h}\in V_{h}^{0};\end{equation}
\begin{equation}\label{bconsistency}b\left(\boldsymbol{v}_{h},q_{h}\right)=b\left(\Pi_{h}\boldsymbol{v}_{h},q_{h}\right) \text{ for all } \left(\boldsymbol{v}_{h},q_{h}\right)\in V_{h}\times W_{h};\end{equation}
\begin{equation}\label{consistency}
\left(\boldsymbol{g}, \boldsymbol{v}_{h}-\Pi_{h} \boldsymbol{v}_{h}\right) \leq C h^{k}\left|\boldsymbol{g}\right|_{k-1}\left\|\nabla \boldsymbol{v}_{h}\right\|  \text { for all } \boldsymbol{g} \in \boldsymbol{H}^{k-1}\left(\Omega\right), \boldsymbol{v}_{h}\in V_{h}^{0}.
\end{equation}
Note that \cref{divfreedefinition} can be derived from \cref{divfreeinclude} and \cref{bconsistency}.

We also assume that $\Pi_{h}$ satisfies the following properties.
\begin{assumption}\label{assumption1}
There exists two operators $\Pi_{h}^{1}: V_{h}\rightarrow V_{h}$ and $\Pi_{h}^{R}: V_{h}\rightarrow X_{h}$ such that $\Pi_{h}\boldsymbol{v}=\Pi_{h}^{1}\boldsymbol{v}+\Pi_{h}^{R}\boldsymbol{v}$ for all $\boldsymbol{v}\in V_{h}$ and
\begin{equation}\label{boundassumption}
\left|\Pi_{h}^{1}\boldsymbol{v}_{h}\right|_{1,\infty,K}\leq C \left|\boldsymbol{v}_{h}\right|_{1,\infty,K}\ \ \forall~ K\in \mathcal{T}_{h}, \boldsymbol{v}_{h}\in V_{h}^{0},
\end{equation}
\begin{equation}\label{convergenceassumption}
\left\|\Pi_{h}^{R}\boldsymbol{v}_{h}\right\|_{\infty,K}\leq C h_{K} \left|\boldsymbol{v}_{h}\right|_{1,\infty,K}\ \ \forall~ K\in \mathcal{T}_{h}, \boldsymbol{v}_{h}\in V_{h}^{0}.
\end{equation}
\end{assumption}
Finally, we extend the definitions of $\Pi_{h}$, $\Pi_{h}^{1}$ and $\Pi_{h}^{R}$ to $\operatorname{span}\left\{{\boldsymbol{u}}\right\}$ for the exact solution $\boldsymbol{u}$, by defining that
\begin{equation}\label{pihontruesolution}
\Pi_{h}\boldsymbol{u}=\Pi_{h}^{1}\boldsymbol{u}=\boldsymbol{u},\quad \Pi_{h}^{R}\boldsymbol{u}=\boldsymbol{0}.
\end{equation}
Since we have assumed $\boldsymbol{u}\notin V_{h}$, this extension will not arise contradiction with their definitions on $V_{h}$.

In \cref{sec:4}, we shall show that the reconstruction operators in \cite[Remark 4.2]{linke2016} and their higher order versions on a class of locally divergence-free simplicial elements satisfy all the above properties.
\subsection{The EMAPR method for classical elements}
\label{sec:22}
Introduce
\begin{displaymath}
a\left(\boldsymbol{u}, \boldsymbol{v}\right):=\left(\nabla\boldsymbol{u},\nabla\boldsymbol{v}\right)\text{ for all } \boldsymbol{u}, \boldsymbol{v}\in V,
\end{displaymath}
and
\begin{displaymath}
c\left(\boldsymbol{u},\boldsymbol{v},\boldsymbol{w}\right):=\left(\left(\boldsymbol{u}\cdot\nabla\right)\boldsymbol{v},\boldsymbol{w}\right) \text{ for all } \left(\boldsymbol{u},\boldsymbol{v},\boldsymbol{w}\right)\in
\boldsymbol{L}^{2}\left(\Omega\right)\times V\times \boldsymbol{L}^{2}\left(\Omega\right).
\end{displaymath}

The weak formulation for \cref{NSE} characterises $\left(\boldsymbol{u},p\right): J \rightarrow V\times W$ by
\begin{subequations}\label{weakcontinuousform}
\begin{align}
\left(\frac{\partial\boldsymbol{u}}{\partial t},\boldsymbol{v}\right)+\nu a\left(\boldsymbol{u}, \boldsymbol{v}\right)+c\left(\boldsymbol{u},\boldsymbol{u},\boldsymbol{v}\right)-b\left(\boldsymbol{v}, p\right) &=\left(\boldsymbol{f}, \boldsymbol{v}\right)
\quad\forall~\boldsymbol{v}\in V,\label{weakcontinuousform1} \\
b\left(\boldsymbol{u}, q\right) &=0\quad\  \forall~q\in W,\label{weakcontinuousform2}
\end{align}
\end{subequations}
and $\boldsymbol{u}\left(0\right)=\boldsymbol{u}^{0}$. A straightforward semi-discrete analog of \cref{weakcontinuousform} is to find $\left(\boldsymbol{u}_{h},p_{h}\right): J \rightarrow V_{h}\times W_{h}$ satisfying
$\boldsymbol{u}_{h}\left(0\right)=\boldsymbol{u}_{h}^{0}\in V_{h}$ with $\boldsymbol{u}_{h}^{0}$ some approximation of $\boldsymbol{u}^{0}$ and
\begin{subequations}\label{discreteanalog}
\begin{align}
\left(\frac{\partial\boldsymbol{u}_{h}}{\partial t},\boldsymbol{v}_{h}\right)+\nu a\left(\boldsymbol{u}_{h}, \boldsymbol{v}_{h}\right)+c\left(\boldsymbol{u}_{h},\boldsymbol{u}_{h},\boldsymbol{v}_{h}\right)-b\left(\boldsymbol{v}_{h}, p_{h}\right) &=\left(\boldsymbol{f}, \boldsymbol{v}_{h}\right),\label{discreteanalog1} \\
b\left(\boldsymbol{u}_{h}, q_{h}\right) &=0,\label{discreteanalog2}
\end{align}
\end{subequations}
for all $\left(\boldsymbol{v}_{h},q_{h}\right)\in V_{h}\times W_{h}$.
However, it is well-known that the above scheme is not energy-stable and pressure-robust unless $\boldsymbol{u}_{h}$ is exactly divergence-free (or equivalently, $\nabla\cdot V_{h}\subseteq W_{h}$). To obtain a pressure-robust velocity for classical elements, in \cite{linke2016} Linke and Merdon proposed a novel finite element formulation which reads
\begin{displaymath}
\begin{aligned}
\left(\Pi_{h}\frac{\partial\boldsymbol{u}_{h}}{\partial t},\Pi_{h}\boldsymbol{v}_{h}\right)&+\nu a\left(\boldsymbol{u}_{h}, \boldsymbol{v}_{h}\right)+c\left(\boldsymbol{u}_{h},\boldsymbol{u}_{h},\Pi_{h}\boldsymbol{v}_{h}\right)\\&-b\left(\boldsymbol{v}_{h}, p_{h}\right) +
b\left(\boldsymbol{u}_{h}, q_{h}\right) =\left(\boldsymbol{f}, \Pi_{h}\boldsymbol{v}_{h}\right).
\end{aligned}
\end{displaymath}
Here $b\left(\boldsymbol{v}_{h},p_{h}\right)$ should be interpreted as $b\left(\Pi_{h}\boldsymbol{v}_{h},p_{h}\right)$ via property \cref{bconsistency}. By using divergence-free reconstructions, the above formulation restores the $L^{2}$-orthogonality between discretely divergence-free test functions and gradient fields, and thus remove the effect of the continuous pressure for velocity errors. There is a consistency error arising from the diffusion term.

To make the method energy-stable, a pressure-robust and energy-conserving discretization (the rotational form) of the nonlinear term were also proposed in \cite{linke2016}:
\begin{equation}\label{rotationalform}
c_{\operatorname{rot}}\left(\boldsymbol{u}_{h},\boldsymbol{v}_{h},\boldsymbol{w}_{h}\right):=\left(\nabla\times\boldsymbol{u}_{h}\times\Pi_{h}\boldsymbol{v}_{h},\Pi_{h}\boldsymbol{w}_{h}\right),
\end{equation}
where $\nabla\times$ is the $\operatorname{curl}$ operator \cite{girault_finite_1986}.
However, following \cite{Rebholz2017} or \cref{sec:23} below, one can prove that the rotational form does not preserve momentum and angular momentum (see \cref{remark:momentumforRot} below). To resolve this issue, we propose an EMA-conserving form, which results in the EMAPR reconstruction:
\begin{subequations}\label{EMAPRsemidiscreteform}
\begin{align}
{\rm Find}~\left(\boldsymbol{u}_{h},p_{h}\right): J\rightarrow V_{h}\times W_{h}~ {\rm such ~that}\quad\quad\quad\quad\quad\qquad&\nonumber\\
d_{h}\left(\frac{\partial\boldsymbol{u}_{h}}{\partial t},\boldsymbol{v}_{h}\right)+\nu a\left(\boldsymbol{u}_{h}, \boldsymbol{v}_{h}\right)+c_{h}\left(\boldsymbol{u}_{h},\boldsymbol{u}_{h},\boldsymbol{v}_{h}\right)-b\left(\boldsymbol{v}_{h}, p_{h}\right) &=\left(\boldsymbol{f}, \Pi_{h}\boldsymbol{v}_{h}\right), \label{EMAPRsemidiscreteform1} \\
b\left(\boldsymbol{u}_{h}, q_{h}\right) &=0,\qquad\ \ \quad\label{EMAPRsemidiscreteform2}
\end{align}
\end{subequations}
for all $\boldsymbol{v}_{h}\in V_{h},q_{h}\in W_{h}$ and $\boldsymbol{u}_{h}\left(0\right)=\boldsymbol{u}_{h}^{0}$.
Here $d_{h}$ is given by
\begin{equation}\label{dtbilinearform}
d_{h}\left(\boldsymbol{u}_{h},\boldsymbol{v}_{h}\right):=\left(\Pi_{h}\boldsymbol{u}_{h},\Pi_{h}\boldsymbol{v}_{h}\right)+\alpha\left(\Pi_{h}^{R}\boldsymbol{u}_{h},\Pi_{h}^{R}\boldsymbol{v}_{h}\right),
\end{equation}
where $\alpha$ is a positive parameter.
The trilinear form $c_{h}$ is defined by
\begin{equation}\label{EMAPRtrilinearform}
\begin{aligned}
c_{h}\left(\boldsymbol{u}_{h},\boldsymbol{v}_{h},\boldsymbol{w}_{h}\right):&=c\left(\Pi_{h}\boldsymbol{u}_{h},\Pi_{h}^{1}\boldsymbol{v}_{h},\Pi_{h}^{1}\boldsymbol{w}_{h}\right) +c\left(\Pi_{h}\boldsymbol{u}_{h},\Pi_{h}^{1}\boldsymbol{v}_{h},\Pi_{h}^{R}\boldsymbol{w}_{h}\right)\\&\ \ \ \ \ -
c\left(\Pi_{h}\boldsymbol{u}_{h},\Pi_{h}^{1}\boldsymbol{w}_{h},\Pi_{h}^{R}\boldsymbol{v}_{h}\right)\\
&=c\left(\Pi_{h}\boldsymbol{u}_{h},\Pi_{h}^{1}\boldsymbol{v}_{h},\Pi_{h}\boldsymbol{w}_{h}\right)-
c\left(\Pi_{h}\boldsymbol{u}_{h},\Pi_{h}^{1}\boldsymbol{w}_{h},\Pi_{h}^{R}\boldsymbol{v}_{h}\right).
\end{aligned}
\end{equation}
\begin{remark}
The fundamental requirement of projection $\Pi_{h}$ is: For any $\boldsymbol{v}_{h}\in V_{h}$, $\Pi_{h}\boldsymbol{v}_{h}$ can be decomposed into a sufficiently approximate $H^{1}$-conforming component and a
``small" $H(\operatorname{div})$-conforming component (consider $\Pi_{h}^{1}\boldsymbol{v}_{h}$ and $\Pi_{h}^{R}\boldsymbol{v}_{h}$). This is the prerequisite of constructing our methods. Regarding the discretization of the convection term, a rough description of our basic idea is: Apply the $H^{1}$-conforming part to guarantee the accuracy and ``abuse" the $H(\operatorname{div})$-conforming part to guarantee the conservation of energy, momentum and angular momentum. We note that the reconstruction operators in \cite{Linke2017} for Taylor-Hood and MINI elements probably satisfy this fundamental requirement also.
\end{remark}
\begin{remark}
The bilinear form $d_{h}$ with $\alpha=0$ is the classical discretization in pressure-robust reconstructions for the $\left(\boldsymbol{u},\boldsymbol{v}\right)$-like term. However, for the case $\alpha=0$ we can not obtain a $Re$-semi-robust estimate theoretically. In practice, we find that the stabilization term $\left(\Pi_{h}^{R}\boldsymbol{u}_{h},\Pi_{h}^{R}\boldsymbol{v}_{h}\right)$ is of importance for the high order locally divergence-free elements ($k\geq 2$) in the case that $\nu$ is very small or equal to zero (the Euler equation). In this case, without this term, the $H^{1}$ error of the discrete velocity might be large.
\end{remark}
\subsection{EMA-balance in semi-discrete schemes}
\label{sec:23}
Now we are in the position to analyze the discrete balance laws with the EMAPR reconstruction. We define kinetic energy $E: X\rightarrow \mathbb{R}$, momentum $M: X\rightarrow \mathbb{R}^{d}$ and angular momentum $M_{\boldsymbol{x}}: X\rightarrow \mathbb{R}^{3}$ by
$$E\left(\boldsymbol{u}^{*}\right):=\frac{1}{2} \int_{\Omega}|\boldsymbol{u}^{*}|^{2} {~d} \boldsymbol{x},\quad
M\left(\boldsymbol{u}^{*}\right):=\int_{\Omega} \boldsymbol{u}^{*} ~{d} \boldsymbol{x},\quad
M_{\boldsymbol{x}}\left(\boldsymbol{u}^{*}\right):=\int_{\Omega} \boldsymbol{u}^{*} \times \boldsymbol{x} ~{d} \boldsymbol{x},$$
for any $\boldsymbol{u}^{*}\in X$. For any two-dimensional vector $\boldsymbol{u}^{*}=\left(u_{1}^{*},u_{2}^{*}\right)$, to compute angular momentum or cross product, one can always embed it into three-dimensional spaces by setting $\boldsymbol{u}^{*}=\left(u_{1}^{*},u_{2}^{*},0\right)$.
Let $\boldsymbol{u}$ be the solution of \cref{weakcontinuousform} and it satisfies the following balance laws \cite{Rebholz2017,Rebholz2020}:
\begin{displaymath}
\frac{d}{dt}E\left(\boldsymbol{u}\right)+\nu\left\|\nabla\boldsymbol{u}\right\|^{2}=\left(\boldsymbol{f},\boldsymbol{u}\right),\quad \frac{d}{dt}M\left(\boldsymbol{u}\right)=\int_{\Omega}\boldsymbol{f}~d\boldsymbol{x}, \quad \frac{d}{dt}M_{\boldsymbol{x}}\left(\boldsymbol{u}\right)=\int_{\Omega}\boldsymbol{f}\times \boldsymbol{x}~d\boldsymbol{x},
\end{displaymath}
where the balance laws of momentum and angular momentum are based on some appropriate assumptions. Following \cite{Rebholz2017,EMAC2019,Rebholz2020}, here (only for the analysis of momentum and angular momentum) we assume that $\left(\boldsymbol{u},p\right)$ is compactly supported in $\Omega$ (e.g., consider an isolated vortex). We also define a discrete energy $E_{d}: V_{h}^{0}\oplus\operatorname{span}\left\{\boldsymbol{u}\right\}\rightarrow \mathbb{R}$ by
 $$E_{d}\left(\boldsymbol{u}^{*}\right):=\frac{1}{2} d_{h}\left(\boldsymbol{u}^{*},\boldsymbol{u}^{*}\right),$$
for any $\boldsymbol{u}^{*}\in V_{h}^{0}\oplus\operatorname{span}\left\{\boldsymbol{u}\right\}$. Note that we have $E\left(\Pi_{h}\boldsymbol{u}^{*}\right)\leq  E_{d}\left(\boldsymbol{u}^{*}\right)$ with $E\left(\boldsymbol{u}\right)=E_{d}\left(\boldsymbol{u}\right)$.

The following lemma is essential for EMA analysis.
\begin{lemma}\label{essentiallemma}
For any finite element triple $\left(\boldsymbol{u}_{h},\boldsymbol{v}_{h},\boldsymbol{w}_{h}\right)\in {\boldsymbol{H}}\left(\operatorname{div};\Omega\right)\times{\boldsymbol{H}^{1}}\left(\Omega\right)\times{\boldsymbol{H}^{1}}\left(\Omega\right)$, we have
\begin{equation}\label{essentialformu}
c\left(\boldsymbol{u}_{h},\boldsymbol{v}_{h},\boldsymbol{w}_{h}\right)=-c\left(\boldsymbol{u}_{h},\boldsymbol{w}_{h},\boldsymbol{v}_{h}\right)
\end{equation} if
\begin{itemize}
   \item[\text{1)}]$\boldsymbol{u}_{h}$ is exactly divergence-free, i.e., $\nabla\cdot\boldsymbol{u}_{h}\equiv0$;
   \item[\text{2)}] $\boldsymbol{u}_{h}|_{\Gamma}\cdot\boldsymbol{n}=0$ or $\boldsymbol{v}_{h}|_{\Gamma}=\boldsymbol{0}$ or $\boldsymbol{w}_{h}|_{\Gamma}=\boldsymbol{0}$.
\end{itemize}
\end{lemma}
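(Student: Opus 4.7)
The plan is to establish the skew-symmetry via the product rule and the divergence theorem, which is the standard route for showing skew-symmetry of the convective trilinear form. I would begin by adding the two sides of \cref{essentialformu} together and writing
\[
c(\boldsymbol{u}_{h},\boldsymbol{v}_{h},\boldsymbol{w}_{h})+c(\boldsymbol{u}_{h},\boldsymbol{w}_{h},\boldsymbol{v}_{h})=\int_{\Omega}\bigl((\boldsymbol{u}_{h}\cdot\nabla)\boldsymbol{v}_{h}\bigr)\cdot\boldsymbol{w}_{h}+\boldsymbol{v}_{h}\cdot\bigl((\boldsymbol{u}_{h}\cdot\nabla)\boldsymbol{w}_{h}\bigr)\,d\boldsymbol{x},
\]
so the goal reduces to showing the right-hand side vanishes.

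Next I would apply the product rule componentwise to recognize the integrand as $(\boldsymbol{u}_{h}\cdot\nabla)(\boldsymbol{v}_{h}\cdot\boldsymbol{w}_{h})$, and then rewrite this using the identity
\[
(\boldsymbol{u}_{h}\cdot\nabla)(\boldsymbol{v}_{h}\cdot\boldsymbol{w}_{h})=\nabla\cdot\bigl((\boldsymbol{v}_{h}\cdot\boldsymbol{w}_{h})\boldsymbol{u}_{h}\bigr)-(\boldsymbol{v}_{h}\cdot\boldsymbol{w}_{h})(\nabla\cdot\boldsymbol{u}_{h}).
\]
The divergence-free hypothesis 1) kills the second term pointwise. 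For the first term, an application of the divergence theorem (interpreted as the duality pairing between the normal trace $\boldsymbol{u}_{h}\cdot\boldsymbol{n}\in H^{-1/2}(\Gamma)$ of an $\boldsymbol{H}(\operatorname{div};\Omega)$ field and the $H^{1/2}$-trace of $\boldsymbol{v}_{h}\cdot\boldsymbol{w}_{h}$) yields
\[
\int_{\Omega}\nabla\cdot\bigl((\boldsymbol{v}_{h}\cdot\boldsymbol{w}_{h})\boldsymbol{u}_{h}\bigr)\,d\boldsymbol{x}=\langle \boldsymbol{u}_{h}\cdot\boldsymbol{n},\,\boldsymbol{v}_{h}\cdot\boldsymbol{w}_{h}\rangle_{\Gamma}.
\]
Any of the three alternatives in hypothesis 2) makes this boundary pairing vanish: $\boldsymbol{u}_{h}\cdot\boldsymbol{n}=0$ kills it directly, while $\boldsymbol{v}_{h}|_{\Gamma}=\boldsymbol{0}$ or $\boldsymbol{w}_{h}|_{\Gamma}=\boldsymbol{0}$ makes the product $\boldsymbol{v}_{h}\cdot\boldsymbol{w}_{h}$ vanish on $\Gamma$. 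Combining this with the elimination of the volume term completes the proof of \cref{essentialformu}.

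There is no real obstacle here since this is a textbook computation; the only point requiring a moment of care is justifying the integration by parts under the stated regularity: $\boldsymbol{u}_{h}\in\boldsymbol{H}(\operatorname{div};\Omega)$ only provides a normal trace in $H^{-1/2}(\Gamma)$, so one must verify that $\boldsymbol{v}_{h}\cdot\boldsymbol{w}_{h}$ lies in a space admissible as a test, which follows from $\boldsymbol{v}_{h},\boldsymbol{w}_{h}\in\boldsymbol{H}^{1}(\Omega)$ together with the usual Sobolev embeddings in dimension $d\leq 3$ (and is automatic for the piecewise-polynomial finite element functions that we will ultimately insert). Once this duality is in place, the three boundary alternatives give the unified conclusion.
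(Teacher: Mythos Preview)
Your argument is correct and is the standard direct route. The paper proceeds differently: it invokes an external result (\cite[Lemma 6.39]{di_pietro_mathematical_2012}) to obtain the diagonal identity $c(\boldsymbol{u}_{h},\boldsymbol{v},\boldsymbol{v})=0$ under the stated hypotheses, and then deduces skew-symmetry by polarization, expanding $0=c(\boldsymbol{u}_{h},\boldsymbol{v}_{h}+\boldsymbol{w}_{h},\boldsymbol{v}_{h}+\boldsymbol{w}_{h})$. Your approach is more self-contained, since you carry out the integration by parts explicitly rather than citing it, and it handles the three boundary alternatives in 2) in one stroke via the single boundary pairing $\langle \boldsymbol{u}_{h}\cdot\boldsymbol{n},\,\boldsymbol{v}_{h}\cdot\boldsymbol{w}_{h}\rangle_{\Gamma}$. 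The paper's route is shorter on the page because it outsources the computation to a reference. One minor remark: your claim that $\boldsymbol{v}_{h}\cdot\boldsymbol{w}_{h}$ has an admissible $H^{1/2}$ trace does not follow from Sobolev embeddings alone for general $\boldsymbol{v}_{h},\boldsymbol{w}_{h}\in\boldsymbol{H}^{1}(\Omega)$ in $d=2,3$ (the product is only in $W^{1,q}$ for some $q<2$), but, as you note, this is immaterial for the piecewise-polynomial finite element functions actually used here.
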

\begin{proof}
This lemma is covered by the lemma for skew-symmetry of a class of discontinuous Galerkin formulations, cf. \cite[Lemma 6.39]{di_pietro_mathematical_2012}. In fact, the trilinear form $``t_{h}\left(\cdot,\cdot,\cdot\right)"$ in \cite[Lemma 6.39]{di_pietro_mathematical_2012} is exactly $c\left(\cdot,\cdot,\cdot\right)$ provided that all inputs satisfy the conditions in \cref{essentiallemma}. Then \cite[Lemma 6.39]{di_pietro_mathematical_2012} implies, for $\left(\boldsymbol{u}_{h},\boldsymbol{v}_{h},\boldsymbol{w}_{h}\right)$ satisfying the conditions in \cref{essentiallemma},
\begin{subequations}\nonumber
\begin{align}
0=c\left(\boldsymbol{u}_{h},\boldsymbol{v}_{h}+\boldsymbol{w}_{h},\boldsymbol{v}_{h}+\boldsymbol{w}_{h}\right)=&\underbrace{c\left(\boldsymbol{u}_{h},\boldsymbol{v}_{h},\boldsymbol{v}_{h}\right)}_{=0}
+\underbrace{c\left(\boldsymbol{u}_{h},\boldsymbol{w}_{h},\boldsymbol{w}_{h}\right)}_{=0}+\\ &c\left(\boldsymbol{u}_{h},\boldsymbol{v}_{h},\boldsymbol{w}_{h}\right)+c\left(\boldsymbol{u}_{h},\boldsymbol{w}_{h},\boldsymbol{v}_{h}\right).
\end{align}
\end{subequations}
This completes the proof.
\end{proof}
\cref{essentiallemma} and \cref{EMAPRtrilinearform} imply the following lemma.
\begin{lemma}\label{secondessentiallemma}
For any $\left(\boldsymbol{u}_{h},\boldsymbol{v}_{h},\boldsymbol{w}_{h}\right)\in V_{h}^{0}\times V_{h}\times V_{h}$ we have
\begin{displaymath}
c_{h}\left(\boldsymbol{u}_{h},\boldsymbol{v}_{h},\boldsymbol{w}_{h}\right)=-c_{h}\left(\boldsymbol{u}_{h},\boldsymbol{w}_{h},\boldsymbol{v}_{h}\right).
\end{displaymath}
\end{lemma}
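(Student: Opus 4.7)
The plan is to add the two identities $c_h(\boldsymbol{u}_h,\boldsymbol{v}_h,\boldsymbol{w}_h)$ and $c_h(\boldsymbol{u}_h,\boldsymbol{w}_h,\boldsymbol{v}_h)$ together, observe that the cross terms involving the $\Pi_h^R$ component cancel in pairs by inspection, and reduce the identity to a single application of \cref{essentiallemma} on the purely $H^1$-conforming piece.

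Concretely, I would use the first (three-term) expression of $c_h$ in \cref{EMAPRtrilinearform} and abbreviate
\begin{equation*}
A := c\!\left(\Pi_h\boldsymbol{u}_h,\Pi_h^1\boldsymbol{v}_h,\Pi_h^1\boldsymbol{w}_h\right),\quad
B := c\!\left(\Pi_h\boldsymbol{u}_h,\Pi_h^1\boldsymbol{v}_h,\Pi_h^R\boldsymbol{w}_h\right),\quad
C := c\!\left(\Pi_h\boldsymbol{u}_h,\Pi_h^1\boldsymbol{w}_h,\Pi_h^R\boldsymbol{v}_h\right),
\end{equation*}
so that $c_h(\boldsymbol{u}_h,\boldsymbol{v}_h,\boldsymbol{w}_h) = A + B - C$. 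Swapping $\boldsymbol{v}_h$ and $\boldsymbol{w}_h$ in the same definition produces $c_h(\boldsymbol{u}_h,\boldsymbol{w}_h,\boldsymbol{v}_h) = A' + C - B$, where $A' := c(\Pi_h\boldsymbol{u}_h,\Pi_h^1\boldsymbol{w}_h,\Pi_h^1\boldsymbol{v}_h)$. The asymmetric terms cancel exactly, leaving
\begin{equation*}
c_h\!\left(\boldsymbol{u}_h,\boldsymbol{v}_h,\boldsymbol{w}_h\right) + c_h\!\left(\boldsymbol{u}_h,\boldsymbol{w}_h,\boldsymbol{v}_h\right) = A + A'.
\end{equation*}

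It then remains to show $A+A' = 0$, for which I would invoke \cref{essentiallemma} with triple $(\Pi_h\boldsymbol{u}_h, \Pi_h^1\boldsymbol{v}_h, \Pi_h^1\boldsymbol{w}_h)$. The hypotheses are verified directly from the setup: since $\boldsymbol{u}_h\in V_h^0$, property \cref{divfreedefinition} gives $\nabla\cdot\Pi_h\boldsymbol{u}_h\equiv 0$ and $\Pi_h\boldsymbol{u}_h\in X_h\subset\boldsymbol{H}_0(\operatorname{div};\Omega)$ ensures $\Pi_h\boldsymbol{u}_h\cdot\boldsymbol{n}|_\Gamma=0$; while $\Pi_h^1\boldsymbol{v}_h,\Pi_h^1\boldsymbol{w}_h\in V_h\subset V=\boldsymbol{H}_0^1(\Omega)$ vanish on $\Gamma$. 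Therefore conditions 1) and 2) of \cref{essentiallemma} are satisfied, yielding $A'=-A$ and completing the proof.

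The main subtlety worth pointing out (rather than an obstacle) is precisely why the construction in \cref{EMAPRtrilinearform} is arranged as it is: the $\Pi_h^R$ components of the test functions live only in $\boldsymbol{H}(\operatorname{div})$, not in $\boldsymbol{H}^1$, so \cref{essentiallemma} cannot be applied to any term containing $\Pi_h^R$. The skew-symmetry is thus achieved not by an extra integration by parts on those terms but by their \emph{algebraic} cancellation after symmetrization in $(\boldsymbol{v}_h,\boldsymbol{w}_h)$, which is exactly the design principle encoded by the sign of the last summand in \cref{EMAPRtrilinearform}.
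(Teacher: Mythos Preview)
Your argument is correct and is exactly the computation the paper leaves implicit: the paper simply states that \cref{essentiallemma} and \cref{EMAPRtrilinearform} imply the lemma, and your expansion into $A+B-C$ and $A'+C-B$, followed by one application of \cref{essentiallemma} to the $H^1$-conforming pair, is precisely how that implication is realized.
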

Setting $\boldsymbol{v}_{h}=\boldsymbol{u}_{h}$ in \cref{EMAPRsemidiscreteform} and applying \cref{EMAPRsemidiscreteform2} and \cref{secondessentiallemma},
 one immediately obtains that
\begin{theorem}\label{energytheorem}
Let $\boldsymbol{u}_{h}$ be the solution of \cref{EMAPRsemidiscreteform}. Then it satisfies the following balance of energy:
\begin{displaymath}
\frac{d}{dt}E_{d}\left(\boldsymbol{u}_{h}\right)+\nu\left\|\nabla\boldsymbol{u}_{h}\right\|^{2}=\left(\boldsymbol{f},\Pi_{h}\boldsymbol{u}_{h}\right).
\end{displaymath}
\end{theorem}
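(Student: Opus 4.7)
The plan is to substitute $\boldsymbol{v}_h=\boldsymbol{u}_h$ and $q_h=p_h$ into the system \cref{EMAPRsemidiscreteform} and show that, in the resulting identity, every term except the evolutionary one, the diffusion one, and the forcing one vanishes. Then I would recognize the evolutionary contribution as the time derivative of $E_d(\boldsymbol{u}_h)$.

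First, the pressure term $-b(\boldsymbol{u}_h,p_h)$ is zero by the discrete incompressibility constraint \cref{EMAPRsemidiscreteform2}, since $p_h\in W_h$ is an admissible test function in that equation. The diffusion term is immediate: $\nu a(\boldsymbol{u}_h,\boldsymbol{u}_h)=\nu\|\nabla\boldsymbol{u}_h\|^2$ by definition of $a$.

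Second, I would handle the convective term $c_h(\boldsymbol{u}_h,\boldsymbol{u}_h,\boldsymbol{u}_h)$. Because \cref{EMAPRsemidiscreteform2} forces $\boldsymbol{u}_h\in V_h^0$, \cref{secondessentiallemma} applies with the first argument $\boldsymbol{u}_h$, the second $\boldsymbol{v}_h=\boldsymbol{u}_h$, and the third $\boldsymbol{w}_h=\boldsymbol{u}_h$, giving $c_h(\boldsymbol{u}_h,\boldsymbol{u}_h,\boldsymbol{u}_h)=-c_h(\boldsymbol{u}_h,\boldsymbol{u}_h,\boldsymbol{u}_h)$, hence zero. (If one prefers to avoid invoking the lemma in this degenerate way, one can expand the definition \cref{EMAPRtrilinearform} and apply \cref{essentiallemma} directly to each $c(\Pi_h\boldsymbol{u}_h,\cdot,\cdot)$, since $\Pi_h\boldsymbol{u}_h$ is exactly divergence-free by \cref{divfreedefinition} and lies in $X=\boldsymbol{H}_0(\operatorname{div};\Omega)$ so has vanishing normal trace.)

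Third, for the evolutionary term I use that $d_h$ defined by \cref{dtbilinearform} is a symmetric bilinear form, since it is a sum of two symmetric $L^2$ inner products composed with linear operators. Thus
\begin{equation*}
d_h\!\left(\frac{\partial\boldsymbol{u}_h}{\partial t},\boldsymbol{u}_h\right)=\frac{1}{2}\frac{d}{dt}d_h(\boldsymbol{u}_h,\boldsymbol{u}_h)=\frac{d}{dt}E_d(\boldsymbol{u}_h).
\end{equation*}
Combining these observations with the right-hand side $(\boldsymbol{f},\Pi_h\boldsymbol{u}_h)$ yields the claimed balance. I do not anticipate a serious obstacle: the proof is essentially bookkeeping, and the only place where one must be a little careful is checking that $\boldsymbol{u}_h\in V_h^0$ so that the skew-symmetry result \cref{secondessentiallemma} is legitimately applicable.
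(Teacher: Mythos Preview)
Your proposal is correct and follows exactly the same approach as the paper: set $\boldsymbol{v}_h=\boldsymbol{u}_h$ in \cref{EMAPRsemidiscreteform}, use \cref{EMAPRsemidiscreteform2} to eliminate the pressure term, and apply \cref{secondessentiallemma} (with $\boldsymbol{u}_h\in V_h^0$) to annihilate the convective contribution. The paper states this in one sentence without spelling out the symmetry of $d_h$ or the identification of the evolutionary term with $\tfrac{d}{dt}E_d(\boldsymbol{u}_h)$, so your write-up is simply a more detailed version of the same argument.
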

Boundary conditions may influence the balance of momentum and angular momentum. For simplicity, we do some extra assumptions which are similar to the continuous case, to remove the contribution of boundary. Similar assumptions were also used for the analysis of the EMAC formulation in \cite{Rebholz2017}.
\begin{assumption}\label{assumption2}
The finite element solution $\left(\boldsymbol{u}_{h},p_{h}\right)$, $\Pi_{h}\boldsymbol{u}_{h}$ and the external force $\boldsymbol{f}$ are only supported on a sub-mesh $\hat{\mathcal{T}_{h}}\subset\mathcal{T}_{h}$ such that there exists an operator $\chi: \boldsymbol{H}^{1}\left(\Omega\right)\rightarrow \boldsymbol{P}^{k}\subseteq V_{h}$ satisfying
\begin{equation}\label{assumption2formu}\chi\left(\boldsymbol{g}\right)|_{\hat{\mathcal{T}_{h}}}=\boldsymbol{g},\quad\Pi_{h}\chi\left(\boldsymbol{g}\right)|_{\hat{\mathcal{T}_{h}}}=\Pi_{h}^{1}\chi\left(\boldsymbol{g}\right)|_{\hat{\mathcal{T}_{h}}}=\boldsymbol{g},\end{equation}
for $\boldsymbol{g}=\boldsymbol{e}_{i},\boldsymbol{x}\times\boldsymbol{e}_{i}$ $\left(1\leq i\leq d\right)$. Here $\boldsymbol{e}_{i}\in\mathbb{R}^{d}$ is the unit vector whose $i$-th component is equal to 1.
\end{assumption}

In fact, for locally divergence-free elements, the support of $\Pi_{h}\boldsymbol{u}_{h}$ is the same as $\boldsymbol{u}_{h}$, since the reconstruction could be locally performed on each element. Furthermore, note that $\nabla\cdot\boldsymbol{e}_{i}=\nabla\cdot\left(\boldsymbol{x}\times\boldsymbol{e}_{i}\right)=0$, and $\boldsymbol{e}_{i}$ and $\boldsymbol{x}\times\boldsymbol{e}_{i}$ are linear polynomials. So the equalities for $\Pi_{h}$ and $\Pi_{h}^{1}$ in \cref{assumption2formu} are not hard to satisfy. The reconstruction operators in \cref{sec:4} fulfill these equalities.

Next, the following equalities will be used to analyze angular momentum:
\begin{equation}\label{equality1}
\boldsymbol{a}\cdot\left(\boldsymbol{b}\times\boldsymbol{c}\right)=\left(\boldsymbol{a}\times\boldsymbol{b}\right)\cdot\boldsymbol{c},
\end{equation}
and
\begin{equation}\label{equality2}
\left(\boldsymbol{a}\cdot\nabla\right)\left(\boldsymbol{x}\times\boldsymbol{e}_{i}\right)\cdot\boldsymbol{b}=-\left(\boldsymbol{a}\times\boldsymbol{b}\right)\cdot\boldsymbol{e}_{i},
\end{equation}
for any $\boldsymbol{a},\boldsymbol{b},\boldsymbol{c}\in\mathbb{R}^{3}$. These two equalities can be obtained by expanding out each term.

\begin{theorem}\label{MAconservingthm}
Let $\boldsymbol{u}_{h}$ be the solution of \cref{EMAPRsemidiscreteform}. Then under \cref{assumption2} $\Pi_{h}{u}_{h}$ satisfies the following balances:
\begin{displaymath}
\frac{d}{dt}M\left(\Pi_{h}\boldsymbol{u}_{h}\right)=\int_{\Omega}\boldsymbol{f}~d\boldsymbol{x}, \quad \frac{d}{dt}M_{\boldsymbol{x}}\left(\Pi_{h}\boldsymbol{u}_{h}\right)=\int_{\Omega}\boldsymbol{f}\times \boldsymbol{x}~d\boldsymbol{x}.
\end{displaymath}
\end{theorem}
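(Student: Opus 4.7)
The plan is to test \cref{EMAPRsemidiscreteform1} against the specific discrete functions $\boldsymbol{v}_h = \chi(\boldsymbol{e}_i)$ (for linear momentum) and $\boldsymbol{v}_h = \chi(\boldsymbol{x}\times\boldsymbol{e}_i)$ (for angular momentum), both of which lie in $V_h$ by \cref{assumption2}. The key simplification is that on the support $\hat{\mathcal{T}_h}$ we have $\Pi_h \chi(\boldsymbol{g}) = \Pi_h^1 \chi(\boldsymbol{g}) = \boldsymbol{g}$ and hence $\Pi_h^R \chi(\boldsymbol{g}) \equiv \boldsymbol{0}$ there; combined with the hypothesis that $\boldsymbol{u}_h$, $p_h$, $\boldsymbol{f}$ and $\Pi_h \boldsymbol{u}_h$ all vanish off $\hat{\mathcal{T}_h}$, this lets me replace $\Pi_h \chi(\boldsymbol{g})$ and $\Pi_h^1 \chi(\boldsymbol{g})$ by $\boldsymbol{g}$ inside every integral and discard pieces carrying a $\Pi_h^R \chi(\boldsymbol{g})$ factor.

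First I would sweep up the easy terms. The stabilization piece of $d_h$ vanishes because $\Pi_h^R \chi(\boldsymbol{g}) \equiv \boldsymbol{0}$ on the support, so $d_h(\partial_t \boldsymbol{u}_h, \chi(\boldsymbol{g})) = (\partial_t \Pi_h \boldsymbol{u}_h, \boldsymbol{g})$; for $\boldsymbol{g} = \boldsymbol{e}_i$ this equals $\frac{d}{dt} M(\Pi_h \boldsymbol{u}_h) \cdot \boldsymbol{e}_i$, while for $\boldsymbol{g} = \boldsymbol{x} \times \boldsymbol{e}_i$ a single application of \cref{equality1} converts it to $\frac{d}{dt} M_{\boldsymbol{x}}(\Pi_h \boldsymbol{u}_h) \cdot \boldsymbol{e}_i$. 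The pressure term $-b(\chi(\boldsymbol{g}), p_h)$ is zero because $\nabla \cdot \boldsymbol{e}_i = \nabla \cdot (\boldsymbol{x} \times \boldsymbol{e}_i) = 0$ on the support of $p_h$. The diffusion $\nu a(\boldsymbol{u}_h, \chi(\boldsymbol{g}))$ reduces to $\nu(\nabla \boldsymbol{u}_h, \nabla \boldsymbol{g})$ by the same support argument, hence is trivially zero for constant $\boldsymbol{g} = \boldsymbol{e}_i$; for $\boldsymbol{g} = \boldsymbol{x} \times \boldsymbol{e}_i$ it takes the form $\nu M_{jk} \int_\Omega \partial_k u_{h,j}\, d\boldsymbol{x}$ with a constant antisymmetric matrix $M$, which vanishes after Gauss--Green since $\boldsymbol{u}_h|_\Gamma = \boldsymbol{0}$. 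The right-hand side $(\boldsymbol{f}, \Pi_h \chi(\boldsymbol{g})) = (\boldsymbol{f}, \boldsymbol{g})$ directly matches the claimed forcing (invoking \cref{equality1} a second time in the angular case).

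The main (and only delicate) step is the convection. Using $\boldsymbol{u}_h \in V_h^0$ from \cref{EMAPRsemidiscreteform2} together with \cref{secondessentiallemma}, I would rewrite $c_h(\boldsymbol{u}_h, \boldsymbol{u}_h, \chi(\boldsymbol{g})) = -c_h(\boldsymbol{u}_h, \chi(\boldsymbol{g}), \boldsymbol{u}_h)$. Expanding via \cref{EMAPRtrilinearform}, the summand carrying $\Pi_h^R \chi(\boldsymbol{g})$ dies on $\hat{\mathcal{T}_h}$ (by \cref{assumption2}) and dies elsewhere (since $\Pi_h \boldsymbol{u}_h = \boldsymbol{0}$ off the support), and the remaining two summands recombine into $c(\Pi_h \boldsymbol{u}_h, \boldsymbol{g}, \Pi_h^1 \boldsymbol{u}_h + \Pi_h^R \boldsymbol{u}_h) = c(\Pi_h \boldsymbol{u}_h, \boldsymbol{g}, \Pi_h \boldsymbol{u}_h)$. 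This is trivially zero when $\boldsymbol{g} = \boldsymbol{e}_i$, and for $\boldsymbol{g} = \boldsymbol{x} \times \boldsymbol{e}_i$ the integrand becomes $-(\Pi_h \boldsymbol{u}_h \times \Pi_h \boldsymbol{u}_h) \cdot \boldsymbol{e}_i = 0$ by \cref{equality2}. The crux of the theorem, and the very reason for the asymmetric design of $c_h$, lies exactly in this collapse: the last two summands of \cref{EMAPRtrilinearform} must line up after the skew-flip so that the full decomposition $\Pi_h \boldsymbol{u}_h = \Pi_h^1 \boldsymbol{u}_h + \Pi_h^R \boldsymbol{u}_h$ reassembles and \cref{equality2} closes the argument. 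Letting $i$ range over $1, \dots, d$ then yields both balance laws simultaneously.
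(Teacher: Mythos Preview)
Your proposal is correct and follows essentially the same route as the paper: test \cref{EMAPRsemidiscreteform1} with $\chi(\boldsymbol{e}_i)$ and $\chi(\boldsymbol{x}\times\boldsymbol{e}_i)$, use \cref{assumption2} to reduce every term to one involving $\boldsymbol{g}$ itself, kill pressure and diffusion by $\nabla\cdot\boldsymbol{g}=0$ and the linearity of $\boldsymbol{g}$, and handle the convection via the skew-symmetry of \cref{secondessentiallemma} plus \cref{equality2}. The only cosmetic difference is that the paper disposes of the diffusion term for angular momentum by invoking $\Delta(\boldsymbol{x}\times\boldsymbol{e}_i)=\boldsymbol{0}$ after an integration by parts, whereas you apply Gauss--Green to each $\int_\Omega \partial_k u_{h,j}\,d\boldsymbol{x}$ directly (the antisymmetry of $M$ is not actually needed there, since each such integral already vanishes by $\boldsymbol{u}_h|_\Gamma=\boldsymbol{0}$).
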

\begin{proof}
Taking $\boldsymbol{v}_{h}=\chi\left(\boldsymbol{e}_{i}\right), \chi\left(\tilde{\boldsymbol{e}}_{i}\right)$ with $\tilde{\boldsymbol{e}}_{i}:=\boldsymbol{x}\times\boldsymbol{e}_{i}$ in \cref{EMAPRsemidiscreteform}, by \cref{assumption2} one respectively obtains
\begin{displaymath}
\left(\frac{\partial\left(\Pi_{h}\boldsymbol{u}_{h}\right)}{\partial t},\boldsymbol{e}_{i}\right)+\nu a\left(\boldsymbol{u}_{h}, \boldsymbol{e}_{i}\right)+c_{h}\left(\boldsymbol{u}_{h},\boldsymbol{u}_{h},\boldsymbol{e}_{i}\right)-b\left(\boldsymbol{e}_{i}, p_{h}\right) =\left(\boldsymbol{f}, \boldsymbol{e}_{i}\right),
\end{displaymath}
and
\begin{displaymath}
\left(\frac{\partial\left(\Pi_{h}\boldsymbol{u}_{h}\right)}{\partial t},\tilde{\boldsymbol{e}}_{i}\right)+\nu a\left(\boldsymbol{u}_{h}, \tilde{\boldsymbol{e}}_{i}\right)+c_{h}\left(\boldsymbol{u}_{h},\boldsymbol{u}_{h},\tilde{\boldsymbol{e}}_{i}\right)-b\left(\tilde{\boldsymbol{e}}_{i}, p_{h}\right) =\left(\boldsymbol{f}, \tilde{\boldsymbol{e}}_{i}\right)=\left(\boldsymbol{f}\times\boldsymbol{x},\boldsymbol{e}_{i}\right),
\end{displaymath}
where in the last inequality we also apply \cref{equality1}.
Since $\Delta\boldsymbol{e}_{i}=\Delta\tilde{\boldsymbol{e}}_{i}=\boldsymbol{0}$ and $\nabla\cdot\boldsymbol{e}_{i}=\nabla\cdot\tilde{\boldsymbol{e}}_{i}=0$, it suffices to prove that
\begin{displaymath}
c_{h}\left(\boldsymbol{u}_{h},\boldsymbol{u}_{h},\boldsymbol{e}_{i}\right)=0, \quad c_{h}\left(\boldsymbol{u}_{h},\boldsymbol{u}_{h},\tilde{\boldsymbol{e}}_{i}\right)=0.
\end{displaymath}
By $\nabla\boldsymbol{e}_{i}=\boldsymbol{0}$, \cref{secondessentiallemma} and \cref{assumption2formu} imply that
\begin{displaymath}
c_{h}\left(\boldsymbol{u}_{h},\boldsymbol{u}_{h},\boldsymbol{e}_{i}\right)=-c_{h}\left(\boldsymbol{u}_{h},\boldsymbol{e}_{i},\boldsymbol{u}_{h}\right)
=-c\left(\Pi_{h}\boldsymbol{u}_{h},\boldsymbol{e}_{i},\Pi_{h}\boldsymbol{u}_{h}\right)=0,
\end{displaymath}
and
\begin{displaymath}
c_{h}\left(\boldsymbol{u}_{h},\boldsymbol{u}_{h},\tilde{\boldsymbol{e}}_{i}\right)=-c\left(\Pi_{h}\boldsymbol{u}_{h},\tilde{\boldsymbol{e}}_{i},\Pi_{h}\boldsymbol{u}_{h}\right)
=\left(\Pi_{h}\boldsymbol{u}_{h}\times\Pi_{h}\boldsymbol{u}_{h},\boldsymbol{e}_{i}\right)=0,
\end{displaymath}
together with \cref{equality2}. Thus we complete the proof.
\end{proof}

For the case $\nu=0$ (the Euler equations), if we apply the no-penetration boundary condition ($\boldsymbol{u}\cdot\boldsymbol{n}=0$ on $J\times\Gamma$), the skew-symmetry of $c_{h}$ still holds by \cref{essentiallemma}. Thus \cref{energytheorem} implies that the method \cref{EMAPRsemidiscreteform} conserves a discrete energy for $\nu=0$ and $\boldsymbol{f}=\boldsymbol{0}$, with only no-penetration boundary condition strongly imposed. \cref{MAconservingthm} implies that \cref{EMAPRsemidiscreteform} conserves linear momentum and angular momentum (of $\Pi_{h}\boldsymbol{u}_{h}$) for $\boldsymbol{f}$ with zero momentum and zero angular momentum, respectively (under \cref{assumption2}).
\begin{remark}[Momentum analysis for the rotational form \cref{rotationalform}]\label{remark:momentumforRot}
We use linear momentum as an example. By \cite[Eq. (8)]{Rebholz2017}, we have
\begin{displaymath}
c_{\operatorname{rot}}\left(\boldsymbol{u}_{h},\boldsymbol{u}_{h},\boldsymbol{v}_{h}\right)=
c\left(\Pi_{h}\boldsymbol{u}_{h},\boldsymbol{u}_{h},\Pi_{h}\boldsymbol{v}_{h}\right)-
c\left(\Pi_{h}\boldsymbol{v}_{h},\boldsymbol{u}_{h},\Pi_{h}\boldsymbol{u}_{h}\right).
\end{displaymath}
Then taking $\boldsymbol{v}_{h}=\chi\left(\boldsymbol{e}_{i}\right)$ gives
\begin{displaymath}
\begin{aligned}
c_{\operatorname{rot}}\left(\boldsymbol{u}_{h},\boldsymbol{u}_{h},\boldsymbol{e}_{i}\right)&=
c\left(\Pi_{h}\boldsymbol{u}_{h},\boldsymbol{u}_{h},\boldsymbol{e}_{i}\right)-
c\left(\boldsymbol{e}_{i},\boldsymbol{u}_{h},\Pi_{h}\boldsymbol{u}_{h}\right)\\
&=-c\left(\Pi_{h}\boldsymbol{u}_{h},\boldsymbol{e}_{i},\boldsymbol{u}_{h}\right)-
c\left(\boldsymbol{e}_{i},\boldsymbol{u}_{h},\Pi_{h}\boldsymbol{u}_{h}\right) \quad \text {(by \cref{essentiallemma})}\\
&=-c\left(\boldsymbol{e}_{i},\boldsymbol{u}_{h},\Pi_{h}\boldsymbol{u}_{h}\right)\neq 0.
\end{aligned}
\end{displaymath}
Thus linear momentum is not preserved by $c_{\operatorname{rot}}$. Angular momentum can be similarly checked.
\end{remark}
\section{A pressure-robust and $Re$-semi-robust error estimate}
\label{sec:3}
Let $\boldsymbol{u}$ solve \cref{weakcontinuousform}. We assume $\Delta\boldsymbol{u}\in L^{2}\left(J; \boldsymbol{L}^{2}\left(\Omega\right)\right)$. Multiplying $\Pi_{h}\boldsymbol{v}_{h}\in \Pi_{h}V_{h}^{0}$ on the two sides of \cref{NSE} and integrating over $\Omega$, one arrives at
\begin{equation}\label{continuousform}
\left(\frac{\partial\boldsymbol{u}}{\partial t},\Pi_{h}\boldsymbol{v}_{h}\right)-\nu \left(\Delta\boldsymbol{u}, \Pi_{h}\boldsymbol{v}_{h}\right)+c\left(\boldsymbol{u},\boldsymbol{u},\Pi_{h}\boldsymbol{v}_{h}\right)=\left(\boldsymbol{f}, \Pi_{h}\boldsymbol{v}_{h}\right)
\quad\forall~\boldsymbol{v}_{h}\in V_{h}^{0},
\end{equation}
where the term $\left(\nabla p,\Pi_{h}\boldsymbol{v}_{h}\right)$ has been removed since $\nabla\cdot\Pi_{h}\boldsymbol{v}_{h}\equiv 0$. According to the definition of $\Pi_{h}$, $\Pi_{h}^{1}$ and $\Pi_{h}^{R}$ on the exact solution $\boldsymbol{u}$ (see \Cref{pihontruesolution}) and \cref{dtbilinearform,EMAPRtrilinearform}, \Cref{continuousform} could be rewritten as
\begin{equation}\label{secondcontinuousform}
d_{h}\left(\frac{\partial\boldsymbol{u}}{\partial t},\boldsymbol{v}_{h}\right)-\nu \left(\Delta\boldsymbol{u}, \Pi_{h}\boldsymbol{v}_{h}\right)+c_{h}\left(\boldsymbol{u},\boldsymbol{u},\boldsymbol{v}_{h}\right)=\left(\boldsymbol{f}, \Pi_{h}\boldsymbol{v}_{h}\right)
\quad\forall~\boldsymbol{v}_{h}\in V_{h}^{0}.
\end{equation}

Subtracting \cref{EMAPRsemidiscreteform} from \cref{continuousform} we get the error equation
\begin{equation}\label{errorequation}
\begin{aligned}
d_{h}\left(\frac{\partial\left(\boldsymbol{u}-\boldsymbol{u}_{h}\right)}{\partial t}, \boldsymbol{v}_{h}\right)&+\nu \left(\nabla\left(\boldsymbol{u}-\boldsymbol{u}_{h}\right),\nabla\boldsymbol{v}_{h}\right)+
c_{h}\left(\boldsymbol{u},\boldsymbol{u},\boldsymbol{v}_{h}\right)\\&-c_{h}\left(\boldsymbol{u}_{h},\boldsymbol{u}_{h},\boldsymbol{v}_{h}\right)=\delta_{h}\left(\boldsymbol{u},\boldsymbol{v}_{h}\right)\quad \forall~\boldsymbol{v}_{h}\in V_{h}^{0},
\end{aligned}
\end{equation}
where
\begin{displaymath}
\delta_{h}\left(\boldsymbol{u},\boldsymbol{v}_{h}\right):=\nu\left(\Delta\boldsymbol{u},\Pi_{h}\boldsymbol{v}_{h}\right)+\nu\left(\nabla\boldsymbol{u},\nabla\boldsymbol{v}_{h}\right)=
-\nu\left(\Delta\boldsymbol{u},(1-\Pi_{h})\boldsymbol{v}_{h}\right)
\end{displaymath}
is the consistency error from the diffusion term \cite{linke_robust_2016,linke2016}.

Denote by $\Pi_{h}^{S}: V^{0}\rightarrow V_{h}^{0}$ the Stokes projection which satisfies
\begin{equation}\label{Stokesprojection}
\left(\nabla\left(\boldsymbol{v}-\Pi_{h}^{S}\boldsymbol{v}\right),\nabla\boldsymbol{w}\right)=0 \ \ \ \forall~\boldsymbol{v}\in V^{0}, \boldsymbol{w}\in V_{h}^{0}.
\end{equation}
\begin{assumption}\label{thirdassumption}
For any $\boldsymbol{v}\in V^{0}\cap \boldsymbol{W}^{1,\infty}\left(\Omega\right)$, it holds that
\begin{equation}\label{regularity}
\left\|\nabla\Pi_{h}^{S}\boldsymbol{v}\right\|_{\infty}\leq C \left\|\nabla\boldsymbol{v}\right\|_{\infty}.
\end{equation}
\end{assumption}
We refer the readers to \cite{girault_max-norm_2015} for some analysis of \cref{regularity}.

Split the error $\boldsymbol{u}-\boldsymbol{u}_{h}$ as
\begin{equation}\label{splitting}
\boldsymbol{e}_{h}:=\boldsymbol{u}-\boldsymbol{u}_{h}=\boldsymbol{u}-\Pi_{h}^{S}\boldsymbol{u}+\Pi_{h}^{S}\boldsymbol{u}-\boldsymbol{u}_{h}=\boldsymbol{\eta}+\boldsymbol{\phi}_{h}.
\end{equation}
Next, we introduce the dual norm $\left\|\cdot\right\|_{(V_{h}^{0})^{'}}$ for any linear functional $L$ on $V_{h}^{0}$:
\begin{displaymath}
\left\|L\right\|_{\left(V_{h}^{0}\right)^{'}}:=\sup _{\boldsymbol{v}_{h} \in V_{h}^{0}\setminus\left\{\boldsymbol{0}\right\}} \frac{L\left(\boldsymbol{v}_{h}\right)}{\left\|\nabla \boldsymbol{v}_{h}\right\|},
\end{displaymath}
or for any $\boldsymbol{g}\in \boldsymbol{L}^{2}\left(\Omega\right)$:
\begin{displaymath}
\left\|\boldsymbol{g}\right\|_{\left(V_{h}^{0}\right)^{'}}:=\sup _{\boldsymbol{v}_{h} \in V_{h}^{0}\setminus\left\{\boldsymbol{0}\right\}} \frac{\left(\boldsymbol{g},\boldsymbol{v}_{h}\right)}{\left\|\nabla \boldsymbol{v}_{h}\right\|}.
\end{displaymath}
We also define a mesh-dependent seminorm $|||\cdot|||_{*}$ on $V_{h}$ by
\begin{equation}\label{seminorm}
|||\boldsymbol{w}|||_{*}^{2}:=\sum_{K\in\mathcal{T}_{h}}h_{K}^{-2}\left\|\Pi_{h}^{R}\boldsymbol{w}\right\|_{K}^{2}.
\end{equation}
The following two inequalities will be used to estimate the nonlinear terms.
\begin{lemma}\label{nonlinearestimate}
There exists a positive constant $C$, independent of $h$, such that
\begin{equation}\label{nonlinearestimate1}
c\left(\boldsymbol{z},\boldsymbol{v},\Pi_{h}^{R}\boldsymbol{w}\right)\leq C \left\|\boldsymbol{z}\right\|_{\infty}\left\|\boldsymbol{v}\right\||||\boldsymbol{w}|||_{*},
\end{equation}
and
\begin{equation}\label{nonlinearestimate2}
c\left(\boldsymbol{z},\boldsymbol{v},\Pi_{h}^{R}\boldsymbol{w}\right)\leq C \left\|\boldsymbol{z}\right\|\left\|\boldsymbol{v}\right\|\left|\boldsymbol{w}\right|_{1,\infty},
\end{equation}
for $\left(\boldsymbol{z},\boldsymbol{v},\boldsymbol{w}\right)\in \boldsymbol{L}^{2}\left(\Omega\right)\times V_{h}\times V_{h}^{0}$
with some corresponding regularity conditions.
\end{lemma}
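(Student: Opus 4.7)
The plan is to prove both bounds by an elementwise Hölder split of the trilinear form, using the inverse inequality on the piecewise polynomial space $V_h$ to convert factors of $\|\nabla\boldsymbol{v}\|_K$ into $h_K^{-1}\|\boldsymbol{v}\|_K$, and then either reading off the seminorm $|||\cdot|||_*$ directly (for \cref{nonlinearestimate1}) or invoking Assumption~\ref{assumption1} via \cref{convergenceassumption} to trade a factor $h_K$ against $|\boldsymbol{w}|_{1,\infty,K}$ (for \cref{nonlinearestimate2}).

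For \cref{nonlinearestimate1}, I start from the identity
\begin{displaymath}
c(\boldsymbol{z},\boldsymbol{v},\Pi_h^R\boldsymbol{w})=\sum_{K\in\mathcal{T}_h}\int_K(\boldsymbol{z}\cdot\nabla)\boldsymbol{v}\cdot\Pi_h^R\boldsymbol{w}\,d\boldsymbol{x},
\end{displaymath}
and apply an elementwise Hölder inequality with the split $L^\infty\cdot L^2\cdot L^2$, bounding the $K$-integral by $\|\boldsymbol{z}\|_{\infty,K}\|\nabla\boldsymbol{v}\|_K\|\Pi_h^R\boldsymbol{w}\|_K$. Because $\boldsymbol{v}\in V_h$ is piecewise polynomial of fixed degree on a shape-regular mesh (see \cref{shaperegularity}), the standard inverse inequality yields $\|\nabla\boldsymbol{v}\|_K\leq Ch_K^{-1}\|\boldsymbol{v}\|_K$. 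Substituting and applying a global Cauchy–Schwarz across the elements gives
\begin{displaymath}
|c(\boldsymbol{z},\boldsymbol{v},\Pi_h^R\boldsymbol{w})|\leq C\|\boldsymbol{z}\|_\infty\Bigl(\sum_K\|\boldsymbol{v}\|_K^2\Bigr)^{1/2}\Bigl(\sum_K h_K^{-2}\|\Pi_h^R\boldsymbol{w}\|_K^2\Bigr)^{1/2},
\end{displaymath}
which is exactly $C\|\boldsymbol{z}\|_\infty\|\boldsymbol{v}\|\,|||\boldsymbol{w}|||_*$ by the definition \cref{seminorm}.

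For \cref{nonlinearestimate2}, I use instead the Hölder split $L^2\cdot L^2\cdot L^\infty$, bounding the $K$-integral by $\|\boldsymbol{z}\|_K\|\nabla\boldsymbol{v}\|_K\|\Pi_h^R\boldsymbol{w}\|_{\infty,K}$. Since $\boldsymbol{w}\in V_h^0$, the convergence estimate \cref{convergenceassumption} in Assumption~\ref{assumption1} gives $\|\Pi_h^R\boldsymbol{w}\|_{\infty,K}\leq Ch_K|\boldsymbol{w}|_{1,\infty,K}$, and the same inverse inequality as above produces $h_K^{-1}\|\boldsymbol{v}\|_K$ in place of $\|\nabla\boldsymbol{v}\|_K$. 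The two factors of $h_K^{\pm 1}$ cancel exactly, leaving $C\|\boldsymbol{z}\|_K\|\boldsymbol{v}\|_K|\boldsymbol{w}|_{1,\infty,K}$; bounding $|\boldsymbol{w}|_{1,\infty,K}$ by the global $|\boldsymbol{w}|_{1,\infty}$ and summing with Cauchy–Schwarz over elements closes the estimate.

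The ``corresponding regularity conditions'' implicit in the statement are just those needed to make the right-hand sides finite ($\boldsymbol{z}\in\boldsymbol{L}^\infty$ for \cref{nonlinearestimate1}, $\boldsymbol{w}\in\boldsymbol{W}^{1,\infty}$ for \cref{nonlinearestimate2}) together with the membership $\boldsymbol{w}\in V_h^0$ required by Assumption~\ref{assumption1}. There is no real obstacle here: the argument is a two-line bookkeeping exercise whose only nontrivial ingredient is that the factor $h_K$ supplied by \cref{convergenceassumption} (respectively the explicit $h_K^{-1}$ encoded in $|||\cdot|||_*$) lines up precisely with the factor produced by the inverse inequality applied to $\boldsymbol{v}\in V_h$; this cancellation is the point of the lemma and is what allows the $\boldsymbol{v}$-factor to appear in $L^2$ rather than $H^1$ norm.
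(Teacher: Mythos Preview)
Your proposal is correct and follows essentially the same approach as the paper's proof: an elementwise H\"older split ($L^\infty\cdot L^2\cdot L^2$ for \cref{nonlinearestimate1} and $L^2\cdot L^2\cdot L^\infty$ for \cref{nonlinearestimate2}), the inverse inequality on $\boldsymbol{v}\in V_h$ to trade $\|\nabla\boldsymbol{v}\|_K$ for $h_K^{-1}\|\boldsymbol{v}\|_K$, and either the definition of $|||\cdot|||_*$ or \cref{convergenceassumption} to absorb the remaining $h_K$-factor. The only cosmetic difference is that the paper applies the weighted Cauchy--Schwarz (with weights $h_K^{\pm 1}$) before invoking the inverse inequality, whereas you apply the inverse inequality first; the bookkeeping is identical.
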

\begin{proof}
It follows from the Schwarz's inequality and the inverse inequality that
\begin{displaymath}
\begin{aligned}
c\left(\boldsymbol{z},\boldsymbol{v},\Pi_{h}^{R}\boldsymbol{w}\right)&=\sum_{K\in\mathcal{T}_{h}}\left(\left(\boldsymbol{z}\cdot\nabla\right)\boldsymbol{v},\Pi_{h}^{R}\boldsymbol{w}\right)_{K}\leq \left\|\boldsymbol{z}\right\|_{\infty}
\sum_{K\in\mathcal{T}_{h}}\left\|\nabla\boldsymbol{v}\right\|_{K}\left\|\Pi_{h}^{R}\boldsymbol{w}\right\|_{K}
\\& \leq \left\|\boldsymbol{z}\right\|_{\infty}\left(\sum_{K\in\mathcal{T}_{h}}h_{K}^{2}\left\|\nabla\boldsymbol{v}\right\|_{K}^{2}\right)^{1/2}\left(\sum_{K\in\mathcal{T}_{h}}h_{K}^{-2}\left\|\Pi_{h}^{R}\boldsymbol{w}\right\|_{K}^{2}\right)^{1/2}\\
&\leq C \left\|\boldsymbol{z}\right\|_{\infty}\left\|\boldsymbol{v}\right\||||\boldsymbol{w}|||_{*}.
\end{aligned}
\end{displaymath}
For the second inequality, from \cref{convergenceassumption} similarly we have
\begin{displaymath}
\begin{aligned}
c\left(\boldsymbol{z},\boldsymbol{v},\Pi_{h}^{R}\boldsymbol{w}\right)&=\sum_{K\in\mathcal{T}_{h}}\left(\left(\boldsymbol{z}\cdot\nabla\right)\boldsymbol{v},\Pi_{h}^{R}\boldsymbol{w}\right)_{K}\leq
\sum_{K\in\mathcal{T}_{h}}\left\|\boldsymbol{z}\right\|_{K}\left\|\nabla\boldsymbol{v}\right\|_{K}\left\|\Pi_{h}^{R}\boldsymbol{w}\right\|_{\infty, K}
\\& \leq \sum_{K\in\mathcal{T}_{h}}\left\|\boldsymbol{z}\right\|_{K}h_{K}\left\|\nabla\boldsymbol{v}\right\|_{K}h_{K}^{-1}\left\|\Pi_{h}^{R}\boldsymbol{w}\right\|_{\infty, K}
\\& \leq C \sum_{K\in\mathcal{T}_{h}}\left\|\boldsymbol{z}\right\|_{K}\left\|\boldsymbol{v}\right\|_{K}\left|\boldsymbol{w}\right|_{1,\infty,K}
\leq C \left\|\boldsymbol{z}\right\|\left\|\boldsymbol{v}\right\|\left|\boldsymbol{w}\right|_{1,\infty}.
\end{aligned}
\end{displaymath}
This completes the proof.
\end{proof}
\begin{theorem}\label{mainresulttheorem}
Let $\boldsymbol{u}$ be the solution of \cref{weakcontinuousform} and $\boldsymbol{u}_{h}$ be the solution of \cref{EMAPRsemidiscreteform}. Under \cref{assumption1}, \cref{thirdassumption} and the assumptions that $\boldsymbol{u}\in L^{2}\left(J; \boldsymbol{W}^{1,\infty}\left(\Omega\right)\right)\cap L^{4}\left(J; \boldsymbol{H}^{1}\left(\Omega\right)\right)$, $\boldsymbol{u}_{t}\in L^{2}\left(J; \boldsymbol{L}^{2}\left(\Omega\right)\right)$,
 $\Delta\boldsymbol{u}\in L^{2}\left(J; \boldsymbol{L}^{2}\left(\Omega\right)\right)$ and $\boldsymbol{u}_{h}^{0}=\Pi_{h}^{S}\boldsymbol{u}^{0}$, with $C$ independent of $h$ and $\nu$ the following estimate holds:
\begin{equation}\label{mainresult}
\begin{aligned}
E_{d}(\boldsymbol{e}_{h}&\left(T\right))+\frac{\nu}{2}\int_{0}^{T}\left\|\nabla\boldsymbol{e}_{h}\right\|^{2} ~dt\leq E_{d}\left(\boldsymbol{\eta}\left(T\right)\right)+\frac{\nu}{2}\int_{0}^{T}\left\|\nabla\boldsymbol{\eta}\right\|^{2} ~dt\\&+e^{{G\left(\boldsymbol{u},T\right)}}\int_{0}^{T}\left.\bigg\{ E_{d}\left(\boldsymbol{\eta}_{t}\right)+\frac{\nu}{2}\left\|\Delta \boldsymbol{u} \circ(1-\Pi_{h})\right\|_{\left(V_{h}^{0}\right)^{'}}^{2}\right.
\\&\left.+C\left\|\boldsymbol{u}\right\|_{1,\infty}\left(\left\|\Pi_{h}\boldsymbol{\eta}\right\|^{2}+\left\|\nabla\Pi_{h}^{1}\boldsymbol{\eta}\right\|^{2}+|||\boldsymbol{\eta}|||_{*}^{2}\right)\right.\bigg\}~ dt,
\end{aligned}
\end{equation}
where $G(\boldsymbol{u},T)=T+C_{\alpha}\left\|\boldsymbol{u}\right\|_{L^{1}\left(J;\boldsymbol{W}^{1,\infty}\left(\Omega\right)\right)}$ with $C_{\alpha}$ dependent on $\alpha$ and the shape regularity of mesh but independent of $h$ and $\nu$.
\end{theorem}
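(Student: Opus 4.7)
The plan is to test the error equation \cref{errorequation} with $\boldsymbol{v}_h = \boldsymbol{\phi}_h$, which lies in $V_h^0$ because both $\Pi_h^S\boldsymbol{u}$ and $\boldsymbol{u}_h$ do, derive a differential inequality for the discrete energy $E_d(\boldsymbol{\phi}_h)$, and close it by Gronwall in such a way that the exponential factor involves only $\|\boldsymbol{u}\|_{1,\infty}$ and not any norm of $\boldsymbol{u}_h$. After the substitution, symmetry of $d_h$ converts the time-derivative term into $\tfrac{d}{dt}E_d(\boldsymbol{\phi}_h) + d_h(\partial_t\boldsymbol{\eta},\boldsymbol{\phi}_h)$, the Stokes-projection orthogonality \cref{Stokesprojection} collapses the diffusion term to $\nu\|\nabla\boldsymbol{\phi}_h\|^2$, and the consistency error $\delta_h(\boldsymbol{u},\boldsymbol{\phi}_h)$ is handled by its duality definition together with Young's inequality, which contributes $\tfrac{\nu}{2}\|\Delta\boldsymbol{u}\circ(1-\Pi_h)\|_{(V_h^0)'}^2$ to the RHS and leaves $\tfrac{\nu}{2}\|\nabla\boldsymbol{\phi}_h\|^2$ absorbed on the left.

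The decisive step is the nonlinear difference $c_h(\boldsymbol{u}_h,\boldsymbol{u}_h,\boldsymbol{\phi}_h) - c_h(\boldsymbol{u},\boldsymbol{u},\boldsymbol{\phi}_h)$. I write $\boldsymbol{u}_h = \boldsymbol{u}-\boldsymbol{\eta}-\boldsymbol{\phi}_h$, expand trilinearly in the first two slots, and exploit two skew-symmetry cancellations: $c_h(\boldsymbol{u}_h,\boldsymbol{\phi}_h,\boldsymbol{\phi}_h)=0$ by \cref{secondessentiallemma} since $\boldsymbol{u}_h\in V_h^0$, and $c_h(\boldsymbol{u},\boldsymbol{\phi}_h,\boldsymbol{\phi}_h) = c(\boldsymbol{u},\Pi_h^1\boldsymbol{\phi}_h,\Pi_h^1\boldsymbol{\phi}_h)=0$ by \cref{essentiallemma}, using $\nabla\cdot\boldsymbol{u}=0$ together with $\Pi_h^1\boldsymbol{\phi}_h\in V$. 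The only surviving Gronwall-type term is $c_h(\boldsymbol{\phi}_h,\boldsymbol{u},\boldsymbol{\phi}_h)$, which by $\Pi_h^R\boldsymbol{u}=\boldsymbol{0}$ reduces to $c(\Pi_h\boldsymbol{\phi}_h,\boldsymbol{u},\Pi_h\boldsymbol{\phi}_h)$ and is bounded by $|\boldsymbol{u}|_{1,\infty}\|\Pi_h\boldsymbol{\phi}_h\|^2 \le 2|\boldsymbol{u}|_{1,\infty}E_d(\boldsymbol{\phi}_h)$. The remaining pieces mix $\boldsymbol{u}$, $\boldsymbol{\eta}$ and a single factor of $\boldsymbol{\phi}_h$, and I would estimate them by combining the classical convective bounds on $c(\cdot,\cdot,\cdot)$ with \cref{nonlinearestimate} whenever a $\Pi_h^R$ factor appears, then Young's inequality to split the $\boldsymbol{\phi}_h$-factor against the available $\tfrac{\nu}{2}\|\nabla\boldsymbol{\phi}_h\|^2$ and $E_d(\boldsymbol{\phi}_h)$ reservoirs.

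Integrating in time, using $\boldsymbol{\phi}_h(0)=\boldsymbol{0}$ from the initial choice $\boldsymbol{u}_h^0=\Pi_h^S\boldsymbol{u}^0$, and invoking Gronwall with multiplier $1 + C_\alpha\|\boldsymbol{u}(t)\|_{1,\infty}$ gives the bound on $E_d(\boldsymbol{\phi}_h(T)) + \tfrac{\nu}{2}\int_0^T\|\nabla\boldsymbol{\phi}_h\|^2\,dt$ already in the stated form; the triangle inequalities $E_d(\boldsymbol{e}_h)\le 2E_d(\boldsymbol{\eta})+2E_d(\boldsymbol{\phi}_h)$ and $\|\nabla\boldsymbol{e}_h\|^2\le 2\|\nabla\boldsymbol{\eta}\|^2+2\|\nabla\boldsymbol{\phi}_h\|^2$ (with the factor-two constants absorbed into the generic $C$) then convert the estimate for the pair $(\boldsymbol{\eta},\boldsymbol{\phi}_h)$ into the one for $\boldsymbol{e}_h$ stated in \cref{mainresult}. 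The main obstacle is keeping $\|\boldsymbol{u}_h\|_{1,\infty}$ out of the Gronwall constant when estimating the cross term $c_h(\boldsymbol{u}_h,\boldsymbol{\eta},\boldsymbol{\phi}_h)$: the substitution $\boldsymbol{u}_h = \boldsymbol{u}-\boldsymbol{\eta}-\boldsymbol{\phi}_h$ pushes the residual onto a $c_h(\boldsymbol{\phi}_h,\cdot,\cdot)$ piece that must be controlled simultaneously by $E_d(\boldsymbol{\phi}_h)$ and by the seminorm $|||\boldsymbol{\phi}_h|||_*^2$. This is precisely where the stabilization $\alpha>0$ is indispensable: the $\alpha$-weighted $\|\Pi_h^R\boldsymbol{\phi}_h\|^2$ contribution in $E_d$ is what lets the $|||\cdot|||_*^2$-type contributions coming from \cref{nonlinearestimate} be dominated by $C_\alpha E_d(\boldsymbol{\phi}_h)$, producing the constant $C_\alpha$ in $G(\boldsymbol{u},T)$ and explaining the failure of the argument at $\alpha=0$.
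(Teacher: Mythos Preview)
Your overall architecture—test \cref{errorequation} with $\boldsymbol{\phi}_h$, kill the diffusion term via \cref{Stokesprojection}, bound $\delta_h$ by duality plus Young, absorb into $\tfrac{\nu}{2}\|\nabla\boldsymbol{\phi}_h\|^2$, then Gronwall—matches the paper exactly. Your treatment of the nonlinear difference is also workable, but it differs from the paper's in a way worth noting: the paper first writes $\boldsymbol{u}=\boldsymbol{\eta}+\Pi_h^S\boldsymbol{u}$ in the \emph{second} slot, obtaining the three pieces $c_h(\boldsymbol{u},\boldsymbol{\eta},\boldsymbol{\phi}_h)$, $c_h(\boldsymbol{\eta},\Pi_h^S\boldsymbol{u},\boldsymbol{\phi}_h)$, and the single Gronwall term $c_h(\boldsymbol{\phi}_h,\Pi_h^S\boldsymbol{u},\boldsymbol{\phi}_h)$. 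Your decomposition instead places $\boldsymbol{u}$ in the middle slot and then expands $c_h(\boldsymbol{u}_h,\boldsymbol{\eta},\boldsymbol{\phi}_h)$ further, which generates the additional pieces $c_h(\boldsymbol{\eta},\boldsymbol{\eta},\boldsymbol{\phi}_h)$ and $c_h(\boldsymbol{\phi}_h,\boldsymbol{\eta},\boldsymbol{\phi}_h)$. These can all be estimated (after noticing $|\Pi_h^1\boldsymbol{\eta}|_{1,\infty}\le C|\boldsymbol{u}|_{1,\infty}$ via \cref{assumption1} and \cref{thirdassumption}, and $\Pi_h^R\boldsymbol{\eta}=-\Pi_h^R\Pi_h^S\boldsymbol{u}$), and in fact your extra terms regroup back into the paper's: $c_h(\boldsymbol{\phi}_h,\boldsymbol{u},\boldsymbol{\phi}_h)-c_h(\boldsymbol{\phi}_h,\boldsymbol{\eta},\boldsymbol{\phi}_h)=c_h(\boldsymbol{\phi}_h,\Pi_h^S\boldsymbol{u},\boldsymbol{\phi}_h)$. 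So the two routes coincide; the paper's is just cleaner.

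There is, however, a genuine misconception in your last paragraph. You assert that the stabilization $\alpha>0$ is needed so that ``$|||\boldsymbol{\phi}_h|||_*^2$-type contributions'' can be dominated by $C_\alpha E_d(\boldsymbol{\phi}_h)$. This is false: by \cref{seminorm}, $|||\boldsymbol{\phi}_h|||_*^2=\sum_K h_K^{-2}\|\Pi_h^R\boldsymbol{\phi}_h\|_K^2$, which carries an $h_K^{-2}$ weight and cannot be controlled by $E_d(\boldsymbol{\phi}_h)=\tfrac12\|\Pi_h\boldsymbol{\phi}_h\|^2+\tfrac{\alpha}{2}\|\Pi_h^R\boldsymbol{\phi}_h\|^2$ uniformly in $h$, regardless of $\alpha$. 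No $|||\boldsymbol{\phi}_h|||_*$ term should ever appear on the Gronwall side; if your nonlinear estimates produce one, the argument fails. The actual mechanism is different: the Gronwall-type terms (both in the paper's route and, once estimated via \cref{nonlinearestimate2}, in yours) produce factors $\|\Pi_h^1\boldsymbol{\phi}_h\|^2$, and the role of $\alpha$ is the elementary bound $\|\Pi_h^1\boldsymbol{\phi}_h\|^2\le 2\|\Pi_h\boldsymbol{\phi}_h\|^2+2\|\Pi_h^R\boldsymbol{\phi}_h\|^2\le (4+4/\alpha)E_d(\boldsymbol{\phi}_h)$. A related caution: you mention splitting ``against the available $\tfrac{\nu}{2}\|\nabla\boldsymbol{\phi}_h\|^2$'' for nonlinear pieces; since those carry no factor of $\nu$, doing so would introduce $\nu^{-1}$ into the right-hand side and destroy $Re$-semi-robustness. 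All nonlinear $\boldsymbol{\phi}_h$-contributions must be absorbed solely into $E_d(\boldsymbol{\phi}_h)$.
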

\begin{proof}
Substituting \cref{splitting} into \cref{errorequation} and taking $\boldsymbol{v}_{h}=\boldsymbol{\phi}_{h}$ give that
\begin{equation}\label{realerroreq}
\begin{aligned}
\frac{d}{dt}E_{d}\left(\boldsymbol{\phi}_{h}\right)&+\nu\left\|\nabla\boldsymbol{\phi}_{h}\right\|^{2}=d_{h}\left(\boldsymbol{\eta}_{t},\boldsymbol{\phi}_{h}\right)-\underbrace{\nu\left(\nabla\boldsymbol{\eta},\nabla\boldsymbol{\phi}_{h}\right)}_{=0 \text{ by } \cref{Stokesprojection}}\\
&\underbrace{-c_{h}\left(\boldsymbol{u},\boldsymbol{u},\boldsymbol{\phi}_{h}\right)+c_{h}\left(\boldsymbol{u}_{h},\boldsymbol{u}_{h},\boldsymbol{\phi}_{h}\right)}_{\mathcal{NL}}+\delta_{h}\left(\boldsymbol{u},\boldsymbol{\phi}_{h}\right).
\end{aligned}
\end{equation}
Let us estimate each term in \cref{realerroreq}. For the evolutionary term we have
\begin{equation}\label{estimate1}
\left|d_{h}\left(\boldsymbol{\eta}_{t},\boldsymbol{\phi}_{h}\right)\right|\leq E_{d}\left(\boldsymbol{\eta}_{t}\right)+E_{d}\left(\boldsymbol{\phi}_{h}\right).
\end{equation}
For $\delta_{h}$ the estimate could be found in \cite{linke2016}:
\begin{equation}\label{estimate2}
\left|\delta_{h}\left(\boldsymbol{u},\boldsymbol{\phi}_{h}\right)\right|
\leq \frac{1}{2}\nu\left\|\Delta \boldsymbol{u} \circ\left(1-\Pi_{h}\right)\right\|_{\left(V_{h}^{0}\right)^{'}}^{2}+\frac{1}{2}\nu\left\|\nabla \boldsymbol{\phi}_{h}\right\|^{2}.
\end{equation}
Now, let us estimate the convective terms. We use a similar decomposition with \cite{schroeder_towards_2018,Rebholz2020}:
\begin{equation}\label{estimate3}
-\mathcal{NL}=c_{h}\left(\boldsymbol{u},\boldsymbol{\eta},\boldsymbol{\phi}_{h}\right)
+c_{h}\left(\boldsymbol{u},\Pi_{h}^{S}\boldsymbol{u},\boldsymbol{\phi}_{h}\right)
-c_{h}\left(\boldsymbol{u}_{h},\boldsymbol{u}_{h},\boldsymbol{\phi}_{h}\right).
\end{equation}
Further,
\begin{equation}\label{estimate4}
\begin{aligned}
c_{h}\left(\boldsymbol{u},\Pi_{h}^{S}\boldsymbol{u},\boldsymbol{\phi}_{h}\right)
-c_{h}\left(\boldsymbol{u}_{h},\boldsymbol{u}_{h},\boldsymbol{\phi}_{h}\right)&=
c_{h}\left(\boldsymbol{e}_{h},\Pi_{h}^{S}\boldsymbol{u},\boldsymbol{\phi}_{h}\right)
+c_{h}\left(\boldsymbol{u}_{h},\Pi_{h}^{S}\boldsymbol{u},\boldsymbol{\phi}_{h}\right)\\
&-c_{h}\left(\boldsymbol{u}_{h},\boldsymbol{u}_{h},\boldsymbol{\phi}_{h}\right)\\&=
c_{h}\left(\boldsymbol{e}_{h},\Pi_{h}^{S}\boldsymbol{u},\boldsymbol{\phi}_{h}\right)
+c_{h}\left(\boldsymbol{u}_{h},\boldsymbol{\phi}_{h},\boldsymbol{\phi}_{h}\right)\\&=
c_{h}\left(\boldsymbol{e}_{h},\Pi_{h}^{S}\boldsymbol{u},\boldsymbol{\phi}_{h}\right)\\
& = c_{h}\left(\boldsymbol{\eta},\Pi_{h}^{S}\boldsymbol{u},\boldsymbol{\phi}_{h}\right)+
c_{h}\left(\boldsymbol{\phi}_{h},\Pi_{h}^{S}\boldsymbol{u},\boldsymbol{\phi}_{h}\right).
\end{aligned}
\end{equation}
Then it follows from the Schwarz's inequality, Young's inequality, \cref{nonlinearestimate}, \cref{assumption1} and \cref{thirdassumption} that
\begin{equation}\label{estimate5}
\begin{aligned}
|c_{h}(\boldsymbol{u},&\boldsymbol{\eta},\boldsymbol{\phi}_{h})|=\left|c\left(\boldsymbol{u},\Pi_{h}^{1}\boldsymbol{\eta},\Pi_{h}\boldsymbol{\phi}_{h}\right)
-c\left(\boldsymbol{u},\Pi_{h}^{1}\boldsymbol{\phi}_{h},{\Pi_{h}^{R}\boldsymbol{\eta}}\right)\right|_{{\left(\Pi_{h}^{R}\boldsymbol{\eta}=-\Pi_{h}^{R}\Pi_{h}^{S}\boldsymbol{u}\right)}}\\
&\leq \left\|\boldsymbol{u}\right\|_{\infty}\left(\left\|\nabla\Pi_{h}^{1}\boldsymbol{\eta}\right\|\left\|\Pi_{h}\boldsymbol{\phi}_{h}\right\|+
C \left\|\Pi_{h}^{1}\boldsymbol{\phi}_{h}\right\||||\Pi_{h}^{S}\boldsymbol{u}|||_{*}\right)\\
&\leq \left\|\boldsymbol{u}\right\|_{\infty}\left(\frac{1}{2}\left\|\nabla\Pi_{h}^{1}\boldsymbol{\eta}\right\|^{2}+\frac{1}{2}\left\|\Pi_{h}\boldsymbol{\phi}_{h}\right\|^{2}+
C \left(\left\|\Pi_{h}^{1}\boldsymbol{\phi}_{h}\right\|^{2}+|||\boldsymbol{\eta}|||_{*}^{2}\right)\right),
\end{aligned}
\end{equation}
\begin{equation}\label{estimate6}
\begin{aligned}
\left|c_{h}\left(\boldsymbol{\eta},\Pi_{h}^{S}\boldsymbol{u},\boldsymbol{\phi}_{h}\right)\right|&=\left|c\left(\Pi_{h}\boldsymbol{\eta},\Pi_{h}^{1}\Pi_{h}^{S}\boldsymbol{u},\Pi_{h}\boldsymbol{\phi}_{h}\right)-
c\left(\Pi_{h}\boldsymbol{\eta},\Pi_{h}^{1}\boldsymbol{\phi}_{h},\Pi_{h}^{R}\Pi_{h}^{S}\boldsymbol{u}\right)\right|\\
& \leq C \left|\boldsymbol{u}\right|_{1,\infty}\left\|\Pi_{h}\boldsymbol{\eta}\right\|\left(\left\|\Pi_{h}\boldsymbol{\phi}_{h}\right\|+\left\|\Pi_{h}^{1}\boldsymbol{\phi}_{h}\right\|\right)\\
& \leq C \left|\boldsymbol{u}\right|_{1,\infty}\left(\left\|\Pi_{h}\boldsymbol{\eta}\right\|^{2}+
\left\|\Pi_{h}\boldsymbol{\phi}_{h}\right\|^{2}+\left\|\Pi_{h}^{1}\boldsymbol{\phi}_{h}\right\|^{2}\right),
\end{aligned}
\end{equation}
and
\begin{equation}\label{estimate7}
\begin{aligned}
&\left|c_{h}\left(\boldsymbol{\phi}_{h},\Pi_{h}^{S}\boldsymbol{u},\boldsymbol{\phi}_{h}\right)\right|=
\big|c\left(\Pi_{h}\boldsymbol{\phi}_{h},\Pi_{h}^{1}\Pi_{h}^{S}\boldsymbol{u},\Pi_{h}\boldsymbol{\phi}_{h}\right)\\
&-c\left(\Pi_{h}\boldsymbol{\phi}_{h},\Pi_{h}^{1}\boldsymbol{\phi}_{h},\Pi_{h}^{R}\Pi_{h}^{S}\boldsymbol{u}\right)\big|
 \leq C \left|\boldsymbol{u}\right|_{1,\infty}\left(\left\|\Pi_{h}\boldsymbol{\phi}_{h}\right\|^{2}+\left\|\Pi_{h}^{1}\boldsymbol{\phi}_{h}\right\|^{2}\right).
\end{aligned}
\end{equation}
Substituting \crefrange{estimate4}{estimate7} into \cref{estimate3} one could obtain that
\begin{equation}\label{estimate8}
\left|\mathcal{NL}\right|\leq C \left\|\boldsymbol{u}\right\|_{1,\infty}\left(\left\|\Pi_{h}\boldsymbol{\eta}\right\|^{2}+\left\|\nabla\Pi_{h}^{1}\boldsymbol{\eta}\right\|^{2}+|||\boldsymbol{\eta}|||_{*}^{2}+
\left\|\Pi_{h}\boldsymbol{\phi}_{h}\right\|^{2}+\left\|\Pi_{h}^{1}\boldsymbol{\phi}_{h}\right\|^{2}\right).
\end{equation}
Then substituting \cref{estimate1,estimate2,estimate8} into \cref{realerroreq} provides
\begin{equation}\label{mainresults0}
\begin{aligned}
\frac{d}{dt}&E_{d}\left(\boldsymbol{\phi}_{h}\right)+\frac{\nu}{2}\left\|\nabla\boldsymbol{\phi}_{h}\right\|^{2}\leq E_{d}\left(\boldsymbol{\eta}_{t}\right)+E_{d}\left(\boldsymbol{\phi}_{h}\right)+\frac{1}{2}\nu\left\|\Delta \boldsymbol{u} \circ\left(1-\Pi_{h}\right)\right\|_{\left(V_{h}^{0}\right)^{'}}^{2}\\
&+C\left\|\boldsymbol{u}\right\|_{1,\infty}\left(\left\|\Pi_{h}\boldsymbol{\eta}\right\|^{2}+\left\|\nabla\Pi_{h}^{1}\boldsymbol{\eta}\right\|^{2}+|||\boldsymbol{\eta}|||_{*}^{2}+
\left\|\Pi_{h}\boldsymbol{\phi}_{h}\right\|^{2}+\left\|\Pi_{h}^{1}\boldsymbol{\phi}_{h}\right\|^{2}\right).
\end{aligned}
\end{equation}
Note that $\left\|\Pi_{h}\boldsymbol{\phi}_{h}\right\|^{2}\leq 2 E_{d}\left(\boldsymbol{\phi}_{h}\right)$ and $\left\|\Pi_{h}^{1}\boldsymbol{\phi}_{h}\right\|^{2}\leq \left(4+4/\alpha\right) E_{d}\left(\boldsymbol{\phi}_{h}\right)$ by \cite[Lemma 3]{brezzi_mixed_2005}. Finally, integrating over $J$, and applying the fact $\boldsymbol{u}_{h}\left(0\right)=\Pi_{h}^{S}\boldsymbol{u}\left(0\right)$ and the Gronwall inequality, we can get
\begin{equation}\nonumber
\begin{aligned}
E_{d}\left(\boldsymbol{\phi}_{h}\left(T\right)\right)&+\frac{\nu}{2}\int_{0}^{T}\left\|\nabla\boldsymbol{\phi}_{h}\right\|^{2} ~dt \leq e^{{G\left(\boldsymbol{u},T\right)}}\int_{0}^{T}\left.\bigg\{ E_{d}\left(\boldsymbol{\eta}_{t}\right)+\frac{\nu}{2}\left\|\Delta \boldsymbol{u} \circ\left(1-\Pi_{h}\right)\right\|_{\left(V_{h}^{0}\right)^{'}}^{2}\right.
\\&\left.+C\left\|\boldsymbol{u}\right\|_{1,\infty}\left(\left\|\Pi_{h}\boldsymbol{\eta}\right\|^{2}+\left\|\nabla\Pi_{h}^{1}\boldsymbol{\eta}\right\|^{2}+|||\boldsymbol{\eta}|||_{*}^{2}\right)\right.\bigg\} ~dt.
\end{aligned}
\end{equation}
Then \cref{mainresult} follows immediately from a combination of the above inequality and the triangle inequality.
\end{proof}
\section{The reconstruction on simiplicial locally mass-conserving elements}
\label{sec:4}
In this section, we focus on a class of simiplicial locally divergence-free elements which satisfy the inf-sup condition \cref{infsupcondition}, and give the corresponding divergence-free reconstruction operators. First, let us recall the construction of the locally divergence-free elements in \cite[pp. 132-144]{girault_finite_1986}, where the lowest order case is the well-known Bernardi-Raugel element \cite{bernardi_analysis_1985}. Consider an arbitrary element $K$ with vertices $a_{i},1\leq i\leq d+1$. Denote by $e_{i}$ the edge/face opposite to $a_{i}$ and $\boldsymbol{n}_{i}$ the unit outward normal vector corresponding to $e_{i}, 1\leq i\leq d+1$. Further, $\lambda_{i}, 1\leq i\leq d+1$ denote the corresponding barycentric coordinates. Then the face bubbles are defined by
\begin{displaymath}
\boldsymbol{b}_{i}:=\left(\prod_{1\leq j\leq d+1; j\neq i}\lambda_{j}\right)\boldsymbol{n}_{i}, 1\leq i \leq d+1.
\end{displaymath}
We also define
\begin{displaymath}
{b}_{K}:=\prod_{1\leq j\leq d+1}\lambda_{j},\quad \tilde{P}^{k}\left(K\right):=\operatorname{span}\left\{\prod_{i=1}^{d}x_{i}^{k_{i}}, k_{i}\geq 0, 1\leq i\leq d, \sum_{i=1}^{d}k_{i}=k\right\}.
\end{displaymath}
Then the local finite element spaces for velocity on an element $K$ are defined as \cref{tab:elements}.
\begin{table}
\centering
\caption{Local velocity spaces on an element $K$.}
\begin{tabular}{|c|c|l|}
  \hline
  Order    & Dimension     & Local space \\
  \hline
  $k=1$    &  2D/3D        & $\boldsymbol{{\mathcal{P}}}_{k}\left(K\right)=\left[P^{1}\left(K\right)\right]^{d}\oplus\operatorname{span}\left\{\boldsymbol{b}_{i},1\leq i\leq d+1\right\}$\\
  \hline
  $k\geq2$ &  2D           & $\boldsymbol{{\mathcal{P}}}_{k}\left(K\right)=\left[P^{k}\left(K\right)\oplus{b}_{K}\tilde{P}^{k-2}\left(K\right)\right]^{2}$\\
  \hline
  $k=2$    &  3D           & $\boldsymbol{{\mathcal{P}}}_{k}\left(K\right)=\left[P^{2}\left(K\right)\oplus{b}_{K}\tilde{P}^{0}\left(K\right)\right]^{3}\oplus\operatorname{span}\left\{\boldsymbol{b}_{i},1\leq i\leq 4\right\}$\\
  \hline
  $k\geq3$ &  3D           & $\boldsymbol{{\mathcal{P}}}_{k}\left(K\right)=\left[P^{k}\left(K\right)\oplus{b}_{K}\left(\tilde{P}^{k-2}\left(K\right)\oplus \tilde{P}^{k-3}\left(K\right)\right)\right]^{3}$\\
  \hline
\end{tabular}
\label{tab:elements}
\end{table}
For $k$-th order velocity spaces, the matching pressure space is the space of discontinuous piecewise polynomials of degree no more than $k-1$, whatever the dimension is. In what follows, $V_{h}$ will denote a velocity space of order $k$ mentioned above and $W_{h}$ is the corresponding pressure space. From \cref{tab:elements} one can see there exists a space of bubble functions $V_{h}^{b}\cong V_{h}/\boldsymbol{P}^{k}$ such that $V_{h}=\boldsymbol{P}^{k}\oplus V_{h}^{b}$. For any $\boldsymbol{v}_{h}\in V_{h}$, it is natural to split it into two parts:
\begin{displaymath}
\boldsymbol{v}_{h}=\boldsymbol{v}_{h}^{1}+\boldsymbol{v}_{h}^{b} \text{ with } \boldsymbol{v}_{h}^{1}\in \boldsymbol{P}^{k}, \boldsymbol{v}_{h}^{b} \in  V_{h}^{b}.
\end{displaymath}
We consider a class of divergence-free reconstruction operators which were also discussed in \cite[Remark 4.2]{linke2016}. These operators are defined as follows. Let $I_{h}: \boldsymbol{C}^{0}\left(\bar{\Omega}\right)\rightarrow \boldsymbol{P}^{k}$ denote the usual nodal interpolation operator. Then $\Pi_{h}^{1}$ is defined by
\begin{displaymath}
\Pi_{h}^{1}|_{V_{h}}:=I_{h}.
\end{displaymath}
 Let $\Pi_{h}^{RT}$ be the common Raviart-Thomas interpolation of order $k-1$ \cite{boffi_mixed_2013}. The operator $\Pi_{h}^{R}$ is defined by
\begin{equation}\label{realpihr}
\Pi_{h}^{R}|_{V_{h}}:=\Pi_{h}^{RT}\circ\left(1-I_{h}\right).
\end{equation}
Hence, for any $\boldsymbol{v}_{h}\in V_{h}$ one has
\begin{equation}\label{realpih}
\Pi_{h}\boldsymbol{v}_{h}=\Pi_{h}^{1}\boldsymbol{v}_{h}+\Pi_{h}^{R}\boldsymbol{v}_{h}=I_{h}\boldsymbol{v}_{h}+\Pi_{h}^{RT}\left(\boldsymbol{v}_{h}-I_{h}\boldsymbol{v}_{h}\right).
\end{equation}
At this time the space $X_{h}$ could be chosen as
\begin{displaymath}
X_{h}:=\left\{\boldsymbol{v}_{h}\in \boldsymbol{H}\left(\operatorname{div};\Omega\right): \boldsymbol{v}_{h}|_{K}\in \left[P^{k}\left(K\right)\right]^{d}\text{ for all } K\in\mathcal{T}_{h}\right\}.
\end{displaymath}
Clearly $X_{h}$ and $W_{h}$ satisfy the relationship \cref{divfreeinclude}.
\begin{remark}
Under the setting above, for any $\boldsymbol{v}_{h}\in V_{h}$ it holds that
$\Pi_{h}\boldsymbol{v}_{h}=\boldsymbol{v}_{h}^{1}+\Pi_{h}\boldsymbol{v}_{h}^{b}.$ In other words, these reconstruction operators only change the bubble part of the elements. Thus the reconstruction is low-cost and has not changed much compared to the previous classical formulation. This is especially the case for the first order element and the second order element in two dimensions, and the first order element in three dimensions, since it is not hard to find that, for any $\boldsymbol{v}_{h}$ belonging to these spaces,
\begin{displaymath}
\Pi_{h}^{1}\boldsymbol{v}_{h}^{b}=0\Rightarrow \Pi_{h}\boldsymbol{v}_{h}^{b}=\Pi_{h}^{R}\boldsymbol{v}_{h}^{b}=\Pi_{h}^{RT}\boldsymbol{v}_{h}^{b}\Rightarrow \Pi_{h}\boldsymbol{v}_{h}=\boldsymbol{v}_{h}^{1}+\Pi_{h}^{RT}\boldsymbol{v}_{h}^{b}.
\end{displaymath}
\end{remark}
Next, let us analyze the properties of the reconstruction operators defined above. To analyze the convergence rates of EMAPR for the elements mentioned above, we shall assume that the true solution $\boldsymbol{u}\left(t\right)$ ($t\in J$) is in $\boldsymbol{H}^{\frac{3}{2}+\epsilon}\left(\Omega\right)$ with some $\epsilon>0$. This assumption guarantees $\boldsymbol{u}\left(t\right)\in \boldsymbol{C}^{0}\left(\bar{\Omega}\right)$ and thus $I_{h}\boldsymbol{u}$ is well-defined. We define $\boldsymbol{V}\left(\mathcal{T}_{h}\right):=\boldsymbol{C}^{0}\left(\bar{\Omega}\right)\cap \boldsymbol{H}^{2}\left(\mathcal{T}_{h}\right)$.
\begin{lemma}\label{essentiallemma2}
The operators $I_{h}$ and $\Pi_{h}^{RT}$ satisfy the following properties:
\begin{equation}\label{Ihestimate1}
\left\|\boldsymbol{v}-I_{h}\boldsymbol{v}\right\|_{K}+h_{K}\left\|\nabla(\boldsymbol{v}-I_{h}\boldsymbol{v})\right\|_{K}\leq C h_{K}^{2} \left|\boldsymbol{v}\right|_{2,K} \quad\forall \ \boldsymbol{v}\in \boldsymbol{V}\left(\mathcal{T}_{h}\right) ;
\end{equation}
\begin{equation}\label{Ihestimate2}
\left\|\boldsymbol{v}-I_{h}\boldsymbol{v}\right\|_{K}+h_{K}\left\|\nabla(\boldsymbol{v}-I_{h}\boldsymbol{v})\right\|_{K}\leq C h_{K}^{2} \left|\boldsymbol{v}-I_{h}\boldsymbol{v}\right|_{2,K}  \quad\forall \ \boldsymbol{v}\in \boldsymbol{V}\left(\mathcal{T}_{h}\right) ;
\end{equation}
\begin{equation}\label{pihrtestimate1}
\left\|\boldsymbol{v}-\Pi_{h}^{RT}\boldsymbol{v}\right\|_{K}\leq C h_{K}\left\|\nabla\boldsymbol{v}\right\|_{K} \quad\forall \ \boldsymbol{v}\in V;
\end{equation}
\begin{equation}\label{pihrtestimate2}
\left\|\Pi_{h}^{RT}\boldsymbol{v}_{h}\right\|_{p,K}\leq \left\|\boldsymbol{v}_{h}\right\|_{p,K},\quad p=2,\infty, \quad \forall\  \boldsymbol{v}_{h}\in V_{h};
\end{equation}
\begin{equation}\label{rtconsistency}
\left(\boldsymbol{v}-\Pi_{h}^{RT} \boldsymbol{v}, \boldsymbol{w}\right)_{K}=0 \quad \forall~ \boldsymbol{v}\in V, \boldsymbol{w} \in \left[P^{k-2}(K)\right]^{d},
\end{equation}
for all $K\in\mathcal{T}_{h}$.
\end{lemma}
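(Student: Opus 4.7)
The plan is to establish each of the five estimates through classical interpolation theory on a reference simplex combined with affine scaling, relying on well-known results and lightweight Bramble-Hilbert / Deny-Lions arguments. I would treat the Lagrange-interpolation estimates first and then the Raviart-Thomas ones, since they are logically independent.

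Inequality \cref{Ihestimate1} is the classical $P^{k}$ nodal-interpolation error bound on shape-regular simplices under \cref{shaperegularity}, for which I would simply cite a standard reference (e.g.\ Ciarlet). For \cref{Ihestimate2}, set $w:=v-I_{h}v$ and use the idempotence of $I_{h}$ to write $I_{h}w\equiv 0$; equivalently, $w$ vanishes at every Lagrange node of $K$. On the reference simplex $\hat{K}$, the seminorm $|\hat{w}|_{2,\hat{K}}$ is in fact a \emph{norm} on the closed subspace $\{\hat{w}\in H^{2}(\hat{K}):I_{\hat{K}}\hat{w}=0\}$: vanishing of the seminorm forces $\hat{w}$ to be affine, and an affine function that vanishes at the $d+1$ vertices (which are Lagrange nodes for every $k\geq 1$) must vanish identically. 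A Deny-Lions / compactness argument then yields $\|\hat{w}\|_{\hat{K}}+\|\hat{\nabla}\hat{w}\|_{\hat{K}}\leq C|\hat{w}|_{2,\hat{K}}$ on this kernel, and the usual affine pullback $K\to\hat{K}$ together with the standard scaling of the $H^{m}$-seminorms produces exactly the $h_{K}^{2}$ factor on the right.

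For the Raviart-Thomas properties, \cref{rtconsistency} is immediate from the definition of the interior degrees of freedom of $\Pi_{h}^{RT}$, which are precisely the $L^{2}(K)$-moments against $[P^{k-2}(K)]^{d}$; thus it is an identity built into the construction. Estimate \cref{pihrtestimate1} is the standard first-order RT approximation bound for $H^{1}$-functions, proved by a Bramble-Hilbert argument on the reference simplex together with local stability of $\Pi_{h}^{RT}$ in $H^{1}$, for which I would cite Boffi-Brezzi-Fortin. Finally, \cref{pihrtestimate2} is a local stability estimate that reduces to norm equivalence on a finite-dimensional polynomial space: on $\hat{K}$ the map $\Pi_{\hat{K}}^{RT}$ is linear between finite-dimensional spaces, and since $L^{p}$-norms transform by the same factor on both sides of the affine map $K\to\hat{K}$ (no extra power of $h_{K}$ appears), a single reference-element bound suffices.

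The main obstacle is therefore not conceptual but administrative: one has to track through each scaling step that every constant depends only on the shape-regularity parameter $\xi$ from \cref{shaperegularity} and the polynomial order $k$, and never on $h$ or on the particular element $K$. None of the five pieces requires a genuinely new idea; the lemma is essentially a convenient packaging of standard interpolation facts to be used in the subsequent error analysis.
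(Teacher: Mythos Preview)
Your proposal is correct and broadly matches the paper for \cref{Ihestimate1}, \cref{pihrtestimate1} and \cref{rtconsistency}, which are all dispatched by citation. The two points where you diverge are \cref{Ihestimate2} and \cref{pihrtestimate2}. For \cref{Ihestimate2} the paper is much more economical: since $I_{h}(\boldsymbol{v}-I_{h}\boldsymbol{v})=\boldsymbol{0}$, one simply substitutes $\boldsymbol{v}-I_{h}\boldsymbol{v}$ for $\boldsymbol{v}$ in the already-proved \cref{Ihestimate1} and reads off the result in one line; your Deny--Lions argument is valid but reproves what \cref{Ihestimate1} already gives you. For \cref{pihrtestimate2} the paper takes a genuinely different route: it bounds $\|\boldsymbol{v}_{h}-\Pi_{h}^{RT}\boldsymbol{v}_{h}\|_{p,K}$ by chaining an inverse inequality, the approximation bound \cref{pihrtestimate1}, and another inverse inequality (this is where $\boldsymbol{v}_{h}\in V_{h}$ being piecewise polynomial is essential), then concludes by the triangle inequality. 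Your finite-dimensional scaling argument also works, but your claim that ``$L^{p}$-norms transform by the same factor on both sides'' is not quite right as stated: the Raviart--Thomas interpolant commutes with the Piola transform, not the plain affine pullback, and under Piola the two sides pick up factors differing by $\|B_{K}\|\,\|B_{K}^{-1}\|$, which is bounded only thanks to shape regularity \cref{shaperegularity}. The paper's approach avoids this subtlety entirely and has the bonus of explaining why the estimate fails for general $\boldsymbol{v}\in V$.
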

\begin{proof}
We refer the readers to \cite[Theorem 4.4.4]{brenner_mathematical_2008} and \cite[Propositions 2.5.1, 2.3.4]{boffi_mixed_2013} for \cref{Ihestimate1}, \cref{pihrtestimate1} and \cref{rtconsistency}, respectively. Note that $I_{h}\left(\boldsymbol{v}-I_{h}\boldsymbol{v}\right)=\boldsymbol{0}$. Then replacing $\boldsymbol{v}$ with $\boldsymbol{v}-I_{h}\boldsymbol{v}$ in \cref{Ihestimate1} gives \cref{Ihestimate2}.

Let us prove \cref{pihrtestimate2}. We have
\begin{equation}\nonumber
\begin{aligned}
\left\|\boldsymbol{v}_{h}-\Pi_{h}^{RT}\boldsymbol{v}_{h}\right\|_{p,K}&\leq C h_{K}^{d/p-d/2}\left\|\boldsymbol{v}_{h}-\Pi_{h}^{RT}\boldsymbol{v}_{h}\right\|_{K}\\
&\leq C h_{K}^{d/p-d/2+1}\left\|\nabla\boldsymbol{v}_{h}\right\|_{K}\leq C \left\|\boldsymbol{v}_{h}\right\|_{p,K},
\end{aligned}
\end{equation}
where we repeatedly use the local estimates in \cite[Lemma 4.5.3]{brenner_mathematical_2008} and the interpolation error of $\Pi_{h}^{RT}$ \cref{pihrtestimate1}. Then \cref{pihrtestimate2} follows immediately from the triangle inequality. Note that \cref{pihrtestimate2} does not hold if $\boldsymbol{v}_{h}$ is an arbitrary function in $V$.
\end{proof}
\begin{lemma}
The reconstruction operators defined by \cref{realpih} satisfy \crefrange{divfreedefinition}{convergenceassumption} and \cref{assumption2formu}.
\end{lemma}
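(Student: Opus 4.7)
The plan is to verify the six required properties in turn. The two structural facts on which everything rests are the commuting-diagram identity $\nabla\cdot\Pi_h^{RT}\boldsymbol{w}=P_h(\nabla\cdot\boldsymbol{w})$, where $P_h$ is the $L^2$-projection onto the piecewise $P^{k-1}$ space $W_h$, and the orthogonality \cref{rtconsistency} of $(1-\Pi_h^{RT})\boldsymbol{w}$ against $[P^{k-2}(K)]^d$.

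\textbf{Properties \cref{bconsistency} and \cref{divfreedefinition}.} I would begin by writing $\Pi_h\boldsymbol{v}_h = I_h\boldsymbol{v}_h + \Pi_h^{RT}(\boldsymbol{v}_h-I_h\boldsymbol{v}_h)$, taking divergence and applying the commuting diagram; since $\nabla\cdot I_h\boldsymbol{v}_h$ is piecewise $P^{k-1}$ and therefore already in $W_h$ (so fixed by $P_h$), this collapses to $\nabla\cdot\Pi_h\boldsymbol{v}_h = P_h(\nabla\cdot\boldsymbol{v}_h)$. Testing against $q_h\in W_h$ gives \cref{bconsistency}. For \cref{divfreedefinition}, $\boldsymbol{v}_h\in V_h^0$ yields $P_h(\nabla\cdot\boldsymbol{v}_h)=0$, and since $\Pi_h\boldsymbol{v}_h\in X_h$ its divergence is piecewise $P^{k-1}$, so vanishing of the projection yields pointwise vanishing.

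\textbf{Properties \cref{boundassumption} and \cref{convergenceassumption}.} These are scaling arguments. Nodal interpolation onto $\boldsymbol{P}^k$ preserves constants, so on the reference element $\|I_{\hat K}\hat{\boldsymbol{v}}\|_{1,\infty,\hat K}\leq C|\hat{\boldsymbol{v}}|_{1,\infty,\hat K}$ by Bramble--Hilbert on the finite-dimensional polynomial space of bounded degree spanned by $\boldsymbol{\mathcal{P}}_k(\hat K)$; the affine Piola/pullback gives \cref{boundassumption}. For \cref{convergenceassumption}, I would combine the analogous Bramble--Hilbert bound $\|\boldsymbol{v}_h - I_h\boldsymbol{v}_h\|_{\infty,K}\leq C h_K|\boldsymbol{v}_h|_{1,\infty,K}$ with the $L^\infty$-stability \cref{pihrtestimate2} of $\Pi_h^{RT}$ to conclude $\|\Pi_h^R\boldsymbol{v}_h\|_{\infty,K}=\|\Pi_h^{RT}(\boldsymbol{v}_h-I_h\boldsymbol{v}_h)\|_{\infty,K}\leq Ch_K|\boldsymbol{v}_h|_{1,\infty,K}$.

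\textbf{Property \cref{consistency}.} Write $\boldsymbol{v}_h-\Pi_h\boldsymbol{v}_h=(1-\Pi_h^{RT})(\boldsymbol{v}_h-I_h\boldsymbol{v}_h)$. Let $\boldsymbol{g}_h$ denote the element-wise $L^2$-projection of $\boldsymbol{g}$ onto $[P^{k-2}(K)]^d$ (for $k=1$ simply take $\boldsymbol{g}_h=\boldsymbol{0}$). By \cref{rtconsistency} we may subtract $\boldsymbol{g}_h$ without changing the pairing, and then apply Cauchy--Schwarz element by element together with: the standard projection estimate $\|\boldsymbol{g}-\boldsymbol{g}_h\|_K\leq Ch_K^{k-1}|\boldsymbol{g}|_{k-1,K}$; the triangle inequality and \cref{pihrtestimate2} to get $\|\boldsymbol{v}_h-\Pi_h\boldsymbol{v}_h\|_K\leq 2\|\boldsymbol{v}_h-I_h\boldsymbol{v}_h\|_K$; and a Bramble--Hilbert argument (again via scaling on the polynomial space $\boldsymbol{\mathcal{P}}_k(K)$, since $\boldsymbol{v}_h$ is piecewise polynomial) to get $\|\boldsymbol{v}_h-I_h\boldsymbol{v}_h\|_K\leq Ch_K\|\nabla\boldsymbol{v}_h\|_K$. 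Summing over $K$ collects one factor of $h^k$ and returns $\|\nabla\boldsymbol{v}_h\|$, as required.

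\textbf{Property \cref{assumption2formu}.} Both $\boldsymbol{e}_i$ and $\boldsymbol{x}\times\boldsymbol{e}_i$ lie in $\boldsymbol{P}^1\subset\boldsymbol{P}^k$. I would define $\chi(\boldsymbol{g})\in V_h$ by prescribing all Lagrange nodal values of the $\boldsymbol{P}^k$ part inside $\overline{\hat{\mathcal{T}_h}}$ to equal $\boldsymbol{g}$ and tapering off the remaining (exterior) nodal values so as to meet the homogeneous boundary condition. Since a $P^k$ polynomial is determined by its nodal values, $\chi(\boldsymbol{g})|_K=\boldsymbol{g}$ for every $K\in\hat{\mathcal{T}_h}$, hence $I_h\chi(\boldsymbol{g})|_K=\boldsymbol{g}$, giving $\Pi_h^1\chi(\boldsymbol{g})|_{\hat{\mathcal{T}_h}}=\boldsymbol{g}$. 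For $\Pi_h$, use that $\Pi_h^{RT}$ is element-local (its DOFs on $K$ depend only on $\boldsymbol{w}\cdot\boldsymbol{n}|_{\partial K}$ and moments on $K$): since $\chi(\boldsymbol{g})-I_h\chi(\boldsymbol{g})$ vanishes on every $K\in\hat{\mathcal{T}_h}$, so does its RT interpolant, giving $\Pi_h\chi(\boldsymbol{g})|_{\hat{\mathcal{T}_h}}=\boldsymbol{g}$.

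The hardest step is \cref{consistency}, because the gain of a full power $h^k$ rests on a delicate matching of the RT orthogonality order $k-2$ against the Sobolev regularity of $\boldsymbol{g}$ and an appropriate $H^1$-stability of $I_h$ on the full bubble-enriched space $V_h$ (rather than just on $\boldsymbol{P}^k$); the case $k=1$ must be treated separately since the local polynomial space $P^{k-2}$ becomes trivial and one bounds the pairing directly via $\|\boldsymbol{g}\|$ using $\|\boldsymbol{v}_h-\Pi_h\boldsymbol{v}_h\|_K\leq Ch_K\|\nabla\boldsymbol{v}_h\|_K$.
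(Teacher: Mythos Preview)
Your proposal is correct and follows essentially the same route as the paper: commuting-diagram for \cref{bconsistency}/\cref{divfreedefinition}, the RT orthogonality \cref{rtconsistency} plus approximation for \cref{consistency}, and $L^\infty$ scaling/Bramble--Hilbert for \cref{boundassumption}/\cref{convergenceassumption}. The only tactical difference is that the paper obtains $\|\boldsymbol{v}_h-I_h\boldsymbol{v}_h\|_K\leq Ch_K\|\nabla\boldsymbol{v}_h\|_K$ by first passing through $Ch_K^2|\boldsymbol{v}_h|_{2,K}$ via \cref{Ihestimate1} and then applying an inverse inequality, whereas you invoke Bramble--Hilbert directly at order one; and the paper dismisses \cref{assumption2formu} as ``clearly satisfied'' where you supply the explicit cutoff construction of~$\chi$.
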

\begin{proof}
\Cref{assumption2formu} is clearly satisfied. Furthermore since \cref{divfreedefinition} is implied in \cref{bconsistency} due to $\nabla\cdot\Pi_{h}\boldsymbol{v}_{h}\in W_{h}$ for any $\boldsymbol{v}_{h}\in V_{h}$ (see \cref{divfreeinclude}), we only prove \crefrange{bconsistency}{convergenceassumption}.

Denote by $P_{h}: L^{2}\left(\Omega\right)\rightarrow W_{h}$ the $L^{2}$ projection to $W_{h}$. Applying the commuting diagram property for $\Pi_{h}^{RT}$ and $P_{h}$ (e.g., see \cite[Remark 2.5.2]{boffi_mixed_2013}) and \cref{divfreeinclude} one can obtain
\begin{displaymath}
\begin{aligned}
P_{h}\nabla\cdot\Pi_{h}\boldsymbol{v}_{h}&=\nabla\cdot\Pi_{h}\boldsymbol{v}_{h}=
\nabla\cdot\Pi_{h}^{1}\boldsymbol{v}_{h}+\nabla\cdot\Pi_{h}^{RT}\left(1-\Pi_{h}^{1}\right)\boldsymbol{v}_{h}\\&
=\nabla\cdot\Pi_{h}^{1}\boldsymbol{v}_{h}+
P_{h}\nabla\cdot\left(1-\Pi_{h}^{1}\right)\boldsymbol{v}_{h}=P_{h}\nabla\cdot\boldsymbol{v}_{h},
\end{aligned}
\end{displaymath}
which is exactly \cref{bconsistency}.

The proof of \cref{consistency} is very similar to the analysis in \cite{linke_robust_2016}. In fact, for any $\boldsymbol{v}_{h}\in V_{h}$, from \cref{realpih} we have
\begin{equation}\label{vminuspihv}
\boldsymbol{v}_{h}-\Pi_{h}\boldsymbol{v}_{h}=\left(\boldsymbol{v}_{h}-I_{h}\boldsymbol{v}_{h}\right)-\Pi_{h}^{RT}\left(\boldsymbol{v}_{h}-I_{h}\boldsymbol{v}_{h}\right),
\end{equation}
which, together with \cref{rtconsistency}, implies that
\begin{equation}\label{consistency2}
\left(\boldsymbol{v}_{h}-\Pi_{h} \boldsymbol{v}_{h}, \boldsymbol{w}\right)_{K}=0 \quad \forall~ \boldsymbol{w} \in \left[P^{k-2}(K)\right]^{d}, K \in \mathcal{T}_{h}.
\end{equation}
On the other hand, from \cref{vminuspihv}, \cref{pihrtestimate2}, \cref{Ihestimate1} and inverse inequalities, it follows that
\begin{equation}\label{consistency3}
\left\|\boldsymbol{v}_{h}-\Pi_{h}\boldsymbol{v}_{h}\right\|_{K}\leq C \left\|(1-I_{h})\boldsymbol{v}_{h}\right\|_{K}\leq C h_{K}^{2}\left|\boldsymbol{v}_{h}\right|_{2,K}\leq C h_{K}\left\|\nabla\boldsymbol{v}_{h}\right\|_{K}.
\end{equation}
Then a combination of \cref{consistency2}, \cref{consistency3} and approximation theory gives
\begin{displaymath}
\left|\left(\boldsymbol{g},\left(1-\Pi_{h}\right)\boldsymbol{v}_{h}\right)\right|=\left|\left(\boldsymbol{g}-P_{h}^{k-2}\boldsymbol{g},\left(1-\Pi_{h}\right)\boldsymbol{v}_{h}\right)\right|\leq C h^{k}\left|\boldsymbol{g}\right|_{k-1}\left\|\nabla\boldsymbol{v}_{h}\right\|,
\end{displaymath}
where $P_{h}^{k-2}$ is the $L^{2}$ projection operator to the space of piecewise polynomials of degree no more than $k-2$. This completes the proof of \cref{consistency}.

By \cite[Theorem 4.4.4]{brenner_mathematical_2008} we have
\begin{equation}\label{inftyapproximation}
\left\|\left(1-I_{h}\right)\boldsymbol{v}_{h}\right\|_{\infty,K}+h_{K}\left|\left(1-I_{h}\right)\boldsymbol{v}_{h}\right|_{1,\infty,K}\leq C h_{K} \left|\boldsymbol{v}_{h}\right|_{1,\infty,K},
\end{equation}
for all $K\in \mathcal{T}_{h}, \boldsymbol{v}_{h}\in V_{h}^{0}$.
The inequality \cref{boundassumption} follows immediately from a combination of the triangle inequality and \cref{inftyapproximation}.

For \cref{convergenceassumption}, similarly to \cref{consistency3}, using \cref{inftyapproximation} we have
\begin{displaymath}
\left\|\Pi_{h}^{R}\boldsymbol{v}_{h}\right\|_{\infty,K}\leq C \left\|\left(1-I_{h}\right)\boldsymbol{v}_{h}\right\|_{\infty,K}\leq C h_{K} \left|\boldsymbol{v}_{h}\right|_{1,\infty,K}.
\end{displaymath}
Thus we complete the proof.
\end{proof}

Up to now, we have proven that a class of divergence-free reconstruction operators satisfy the assumptions in \cref{sec:21}. Thus it admits an a priori error estimate in \cref{mainresulttheorem}. The only question is whether the right-hand side of \cref{mainresult} is an $O\left(h^{k}\right)$ quantity if $\boldsymbol{u}$ is sufficiently smooth. The non-trivial terms are the ones corresponding to $\Pi_{h}\boldsymbol{\eta}$ (including $E_{d}\left(\boldsymbol{\eta}\right)$), $\Pi_{h}^{1}\boldsymbol{\eta}$, $\Pi_{h}^{R}\boldsymbol{\eta}$ (including $E_{d}\left(\boldsymbol{\eta}\right)$ and $|||\boldsymbol{\eta}|||_{*}$) and $\left\|\Delta \boldsymbol{u} \circ\left(1-\Pi_{h}\right)\right\|_{\left(V_{h}^{0}\right)^{'}}$. The following lemmas are to answer this question.
\begin{lemma}\label{consistencylemma}
Let $\boldsymbol{u}$ be the solution of \cref{weakcontinuousform}. Suppose that $\boldsymbol{u}\left(t\right)\in \boldsymbol{H}^{k+1}\left(\Omega\right)$ $(t\in J)$. Then we have
\begin{equation}\label{conssistencyestimate}
\left\|\Delta \boldsymbol{u}\left(t\right) \circ\left(1-\Pi_{h}\right)\right\|_{\left(V_{h}^{0}\right)^{'}}\leq C h^{k} \left|\boldsymbol{u}\left(t\right)\right|_{k+1}.
\end{equation}
\end{lemma}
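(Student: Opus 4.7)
The plan is to apply the consistency property \cref{consistency} directly, with the specific choice $\boldsymbol{g}=\Delta\boldsymbol{u}(t)$. Since $\Pi_{h}$ has already been proven (in the preceding lemma of \cref{sec:4}) to satisfy \cref{consistency}, essentially all the work is done; this final lemma is just the translation of that abstract estimate into a concrete convergence rate under a smoothness assumption on $\boldsymbol{u}$.

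In more detail, I would first unfold the dual norm: for any $\boldsymbol{v}_{h}\in V_{h}^{0}\setminus\{\boldsymbol{0}\}$, the quantity to be bounded is $\left(\Delta\boldsymbol{u}(t),(1-\Pi_{h})\boldsymbol{v}_{h}\right)/\|\nabla\boldsymbol{v}_{h}\|$. Next, observe that the regularity hypothesis $\boldsymbol{u}(t)\in\boldsymbol{H}^{k+1}(\Omega)$ implies $\Delta\boldsymbol{u}(t)\in\boldsymbol{H}^{k-1}(\Omega)$, with the trivial estimate $|\Delta\boldsymbol{u}(t)|_{k-1}\leq C|\boldsymbol{u}(t)|_{k+1}$ (each second partial derivative is a bounded linear map $\boldsymbol{H}^{k+1}\to\boldsymbol{H}^{k-1}$, so the same holds for the Laplacian). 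Then \cref{consistency} applied with $\boldsymbol{g}=\Delta\boldsymbol{u}(t)$ yields
\begin{equation*}
\left(\Delta\boldsymbol{u}(t),\boldsymbol{v}_{h}-\Pi_{h}\boldsymbol{v}_{h}\right)\leq Ch^{k}\,|\Delta\boldsymbol{u}(t)|_{k-1}\,\|\nabla\boldsymbol{v}_{h}\|\leq Ch^{k}\,|\boldsymbol{u}(t)|_{k+1}\,\|\nabla\boldsymbol{v}_{h}\|.
\end{equation*}
Dividing by $\|\nabla\boldsymbol{v}_{h}\|$ and taking the supremum over $\boldsymbol{v}_{h}\in V_{h}^{0}\setminus\{\boldsymbol{0}\}$ gives \cref{conssistencyestimate}.

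There is essentially no obstacle: the only thing one must notice is the index shift ($k-1$ on the data side versus $k+1$ on $\boldsymbol{u}$), which comes exactly from the fact that $\Delta$ costs two derivatives while \cref{consistency} is an estimate in terms of $|\boldsymbol{g}|_{k-1}$. The lemma can therefore be proved in just a few lines, and no additional scaling or inverse-inequality argument is needed beyond what has already been established for the reconstruction operator.
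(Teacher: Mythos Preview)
Your proposal is correct and matches the paper's own proof exactly: the paper simply states that \cref{conssistencyestimate} follows immediately from \cref{consistency} by taking $\boldsymbol{g}=\Delta\boldsymbol{u}(t)$. Your more detailed unfolding of the dual norm and the observation that $|\Delta\boldsymbol{u}(t)|_{k-1}\leq C|\boldsymbol{u}(t)|_{k+1}$ merely makes explicit what the paper leaves implicit.
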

\begin{proof}
The inequality \cref{conssistencyestimate} follows immediately from \cref{consistency} by taking $\boldsymbol{g}=\Delta\boldsymbol{u}\left(t\right)$.
\end{proof}
Introduce the seminorm $|||\cdot|||_{2}$ on $\boldsymbol{H}^{2}\left(\mathcal{T}_{h}\right)$ by
\begin{displaymath}
|||\boldsymbol{v}|||_{2}^{2}:=\sum_{K\in\mathcal{T}_{h}} \left|\boldsymbol{v}\right|_{2,K}^{2} \quad\forall~ \boldsymbol{v}\in \boldsymbol{H}^{2}\left(\mathcal{T}_{h}\right).
\end{displaymath}
\begin{lemma}\label{piestimatelemma}
Let $\boldsymbol{u}$ be the solution of \cref{weakcontinuousform} and $\boldsymbol{\eta}=\boldsymbol{u}-\Pi_{h}^{S}\boldsymbol{u}$. Suppose that $\boldsymbol{u}\in \boldsymbol{H}^{\frac{3}{2}+\epsilon}\left(\Omega\right)\cap \boldsymbol{H}^{2}\left(\mathcal{T}_{h}\right)$ with $\epsilon>0$. Then we have
\begin{equation}\label{piestimate}
\left\|\Pi_{h}\boldsymbol{\eta}\right\|\leq \left\|\boldsymbol{\eta}\right\|+ C \left(\left\|\boldsymbol{u}-I_{h}\boldsymbol{u}\right\|+\left\|\boldsymbol{\eta}\right\|+ h^{2} |||\boldsymbol{\eta}|||_{2}\right),
\end{equation}
\begin{equation}\label{piestimate1}
\left\|\nabla\Pi_{h}^{1}\boldsymbol{\eta}\right\|\leq 2 \left\|\nabla\boldsymbol{\eta}\right\|+ \left\|\nabla\left(\boldsymbol{u}-I_{h}\boldsymbol{u}\right)\right\|+C h |||\boldsymbol{\eta}|||_{2},
\end{equation}
\begin{equation}\label{piestimater}
\left\|\Pi_{h}^{R}\boldsymbol{\eta}\right\|\leq C \left(\left\|\boldsymbol{u}-I_{h}\boldsymbol{u}\right\|+\left\|\boldsymbol{\eta}\right\|+ h^{2} |||\boldsymbol{\eta}|||_{2}\right),
\end{equation}
and
\begin{equation}\label{seminormestimate}
|||\boldsymbol{\eta}|||_{*} \leq C \left(\left\|\nabla\left(\boldsymbol{u}-I_{h}\boldsymbol{u}\right)\right\|+\left\|\nabla\boldsymbol{\eta}\right\|+h |||\boldsymbol{\eta}|||_{2}\right).
\end{equation}
\end{lemma}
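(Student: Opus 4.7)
The plan is to reduce each estimate to the interpolation properties of $I_h$ and $\Pi_h^{RT}$ collected in \cref{essentiallemma2}. The key leverage comes from the extension convention \cref{pihontruesolution}: since $\Pi_h\boldsymbol{u}=\boldsymbol{u}$, $\Pi_h^1\boldsymbol{u}=\boldsymbol{u}$ and $\Pi_h^R\boldsymbol{u}=\boldsymbol{0}$, linearity and the identifications $\Pi_h^1|_{V_h}=I_h$, $\Pi_h^R|_{V_h}=\Pi_h^{RT}\circ(1-I_h)$ produce the identities
\begin{displaymath}
\Pi_h\boldsymbol{\eta}-\boldsymbol{\eta}=(1-\Pi_h)\Pi_h^S\boldsymbol{u},\ \
\Pi_h^1\boldsymbol{\eta}=\boldsymbol{u}-I_h\Pi_h^S\boldsymbol{u},\ \
\Pi_h^R\boldsymbol{\eta}=-\Pi_h^{RT}(1-I_h)\Pi_h^S\boldsymbol{u}.
\end{displaymath}
Each right-hand side is an interpolation defect acting on $\Pi_h^S\boldsymbol{u}\in V_h$, and using $\Pi_h^S\boldsymbol{u}=\boldsymbol{u}-\boldsymbol{\eta}$ I split $(1-I_h)\Pi_h^S\boldsymbol{u}=(1-I_h)\boldsymbol{u}-(1-I_h)\boldsymbol{\eta}$: the first piece is kept as is (it appears on the right-hand side of the estimates), while the second is controlled by \cref{Ihestimate1} applied to $\boldsymbol{\eta}\in\boldsymbol{V}(\mathcal{T}_h)$ to get $\|(1-I_h)\boldsymbol{\eta}\|_K\le Ch_K^2|\boldsymbol{\eta}|_{2,K}$ and the analogous gradient bound.

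For the three $L^2$-type bounds, I would first peel off $\Pi_h^{RT}$ via its $L^2$-stability \cref{pihrtestimate2}. The factorization $1-\Pi_h=(1-\Pi_h^{RT})(1-I_h)$ on $V_h$ gives $\|(1-\Pi_h)\Pi_h^S\boldsymbol{u}\|_K\le 2\|(1-I_h)\Pi_h^S\boldsymbol{u}\|_K$, and likewise $\|\Pi_h^R\boldsymbol{\eta}\|_K\le\|(1-I_h)\Pi_h^S\boldsymbol{u}\|_K$; the triangle inequality, \cref{Ihestimate1}, and summation over $K$ then deliver \cref{piestimate} and \cref{piestimater}. For \cref{piestimate1} I would bypass $\Pi_h^{RT}$ entirely and write $\nabla\Pi_h^1\boldsymbol{\eta}=\nabla\boldsymbol{\eta}+\nabla(\boldsymbol{u}-I_h\boldsymbol{u})-\nabla(1-I_h)\boldsymbol{\eta}$, using \cref{Ihestimate1} to bound the last term by $Ch|||\boldsymbol{\eta}|||_2$ after summation.

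The hard part is \cref{seminormestimate}, because the $h_K^{-2}$ weight in $|||\cdot|||_*$ forbids the naive $O(h_K^2)$ $L^2$-interpolation bound from producing an $h$-independent constant. The plan is to gain one extra factor of $h_K$ via a discrete Poincar\'e inequality on the bubble space: since the nodal interpolant realizes the decomposition $V_h=\boldsymbol{P}^k\oplus V_h^b$, one has $(1-I_h)\boldsymbol{v}_h\in V_h^b$ for every $\boldsymbol{v}_h\in V_h$, and on the reference element $|\cdot|_{1,\hat K}$ is a genuine norm on the finite-dimensional subspace $V_h^b(\hat K)$ (no nonzero constant belongs to $V_h^b$). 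The standard affine scaling then yields $\|(1-I_h)\boldsymbol{v}_h\|_K\le Ch_K|(1-I_h)\boldsymbol{v}_h|_{1,K}$, which applied to $\boldsymbol{v}_h=\Pi_h^S\boldsymbol{u}$, combined with the triangle inequality $|(1-I_h)\boldsymbol{\eta}|_{1,K}\le|\boldsymbol{\eta}|_{1,K}+|I_h\boldsymbol{\eta}|_{1,K}$ and \cref{Ihestimate1}, recovers all three terms on the right-hand side of \cref{seminormestimate} after squaring and summing with the $h_K^{-2}$ weight. Verifying that the Poincar\'e constant is uniform across the four element families of \cref{tab:elements} is the main technicality, but it is routine from the explicit bubble bases and shape-regularity \cref{shaperegularity}.
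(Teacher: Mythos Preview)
Your plan for \cref{piestimate}, \cref{piestimate1}, \cref{piestimater} is exactly the paper's approach: write out $\Pi_h\boldsymbol{\eta}$, $\Pi_h^1\boldsymbol{\eta}$, $\Pi_h^R\boldsymbol{\eta}$ in terms of $(1-I_h)\Pi_h^S\boldsymbol{u}$, split the latter using $\Pi_h^S\boldsymbol{u}=\boldsymbol{u}-\boldsymbol{\eta}$, and invoke the bounds in \cref{essentiallemma2}. Your split $(1-I_h)\Pi_h^S\boldsymbol{u}=(1-I_h)\boldsymbol{u}-(1-I_h)\boldsymbol{\eta}$ is in fact slightly cleaner than the paper's and even drops the redundant $\|\boldsymbol{\eta}\|$ term in \cref{piestimate}/\cref{piestimater} and the factor $2$ in \cref{piestimate1}.

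For \cref{seminormestimate} there is a small error in your justification: the identification $(1-I_h)V_h\subset V_h^b$ fails for the higher-order families in \cref{tab:elements}. Whenever the bubbles carry interior degrees of freedom (e.g.\ $b_K\tilde P^{k-2}(K)$ with $k\ge 3$ in 2D, or $k\ge 2$ in 3D), one has $I_h\boldsymbol{v}_h^b\neq 0$ in general, so $1-I_h$ is \emph{not} the projection onto $V_h^b$. Your Poincar\'e inequality $\|(1-I_h)\boldsymbol{v}_h\|_K\le Ch_K|(1-I_h)\boldsymbol{v}_h|_{1,K}$ is nevertheless true, but for a different reason: constants lie in $\boldsymbol{P}^k$, so $I_h$ reproduces them and the only constant in the range of $1-I_h$ is zero; then the scaling argument applies to $(1-I_h)V_h(\hat K)$ directly. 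The paper avoids this detour altogether: it uses \cref{Ihestimate2} to get $\|(1-I_h)\Pi_h^S\boldsymbol{u}\|_K\le Ch_K^2|(1-I_h)\Pi_h^S\boldsymbol{u}|_{2,K}$, then applies the inverse inequality (valid because $(1-I_h)\Pi_h^S\boldsymbol{u}$ is piecewise polynomial) to trade $h_K^2|\cdot|_{2,K}$ for $h_K|\cdot|_{1,K}$, arriving at the same Poincar\'e bound with no reference-element analysis. After that, both proofs finish identically via the $m=1$ case of the split of $(1-I_h)\Pi_h^S\boldsymbol{u}$.
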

\begin{proof}
Note that
\begin{equation}\label{piheta}
\begin{aligned}
\Pi_{h}\boldsymbol{\eta}=\boldsymbol{u}-\Pi_{h}\Pi_{h}^{S}\boldsymbol{u}&=\boldsymbol{u}-I_{h}\Pi_{h}^{S}\boldsymbol{u}-\Pi_{h}^{RT}\left(1-I_{h}\right)\Pi_{h}^{S}\boldsymbol{u}\\
&=\boldsymbol{\eta}+\left[\left(1-I_{h}\right)\Pi_{h}^{S}\boldsymbol{u}-\Pi_{h}^{RT}\left(1-I_{h}\right)\Pi_{h}^{S}\boldsymbol{u}\right],
\end{aligned}
\end{equation}
\begin{equation}\label{piheta1}
\Pi_{h}^{1}\boldsymbol{\eta}=\boldsymbol{u}-I_{h}\Pi_{h}^{S}\boldsymbol{u}=\boldsymbol{\eta}+\left(1-I_{h}\right)\Pi_{h}^{S}\boldsymbol{u},
\end{equation}
and
\begin{equation}\label{pihetaR}
\Pi_{h}^{R}\boldsymbol{\eta}=-\Pi_{h}^{R}\Pi_{h}^{S}\boldsymbol{u}=-\Pi_{h}^{RT}\left(1-I_{h}\right)\Pi_{h}^{S}\boldsymbol{u}.
\end{equation}
A common term in the right-hand sides of \crefrange{piheta}{pihetaR} is
\begin{equation}\label{mainerrorsplit}
\left(1-I_{h}\right)\Pi_{h}^{S}\boldsymbol{u}=\left(\boldsymbol{u}-I_{h}\boldsymbol{u}\right)+I_{h}\boldsymbol{\eta}.
\end{equation}
Note that \cref{Ihestimate1} guarantees with the triangle inequality
\begin{equation}\label{Ihbound1}
\left\|I_{h}\boldsymbol{v}\right\|_{K}\leq \left\|\boldsymbol{v}\right\|_{K}+C h_{K}^{2} \left|\boldsymbol{v}\right|_{2,K},
\end{equation}
and
\begin{equation}\label{Ihbound2}
\left\|\nabla I_{h}\boldsymbol{v}\right\|_{K}\leq \left\|\nabla \boldsymbol{v}\right\|_{K}+C h_{K} \left|\boldsymbol{v}\right|_{2,K},
\end{equation}
for all $\boldsymbol{v}\in \boldsymbol{C}^{0}\left(\bar{\Omega}\right)\cap \boldsymbol{H}^{2}\left(\mathcal{T}_{h}\right), K\in\mathcal{T}_{h}$.
Substituting \cref{Ihbound1} and \cref{Ihbound2} into \cref{mainerrorsplit} gives
\begin{equation}\label{1minusIhestimate}
\left|\left(1-I_{h}\right)\Pi_{h}^{S}\boldsymbol{u}\right|_{m,K}\leq \left|\boldsymbol{u}-I_{h}\boldsymbol{u}\right|_{m,K}+\left|\boldsymbol{\eta}\right|_{m,K}+C h_{K}^{2-m} \left|\boldsymbol{\eta}\right|_{2,K} \text{ for } m=0,1.
\end{equation}
Substituting \cref{1minusIhestimate} into \crefrange{piheta}{pihetaR} and applying \cref{pihrtestimate2} for \cref{piheta,pihetaR} provide \crefrange{piestimate}{piestimater}.

To estimate $|||\boldsymbol{\eta}|||_{*}$, with \cref{pihetaR}, \cref{pihrtestimate2}, \cref{Ihestimate2} and the inverse inequality one obtains
\begin{displaymath}
\begin{aligned}
\left\|\Pi_{h}^{R}\boldsymbol{\eta}\right\|_{K}\leq C \left\|\left(1-I_{h}\right)\Pi_{h}^{S}\boldsymbol{u}\right\|_{K}&\leq C h_{K}^{2}  \left|\left(1-I_{h}\right)\Pi_{h}^{S}\boldsymbol{u}\right|_{2,K}\\
&\leq C h_{K}  \left\|\nabla\left(1-I_{h}\right)\Pi_{h}^{S}\boldsymbol{u}\right\|_{K}.
\end{aligned}
\end{displaymath}
The above estimate, together with \cref{seminorm,1minusIhestimate}, implies that
\begin{displaymath}
|||\boldsymbol{\eta}|||_{*}\leq C \left\|\nabla\left(1-I_{h}\right)\Pi_{h}^{S}\boldsymbol{u}\right\|\leq C \left(\left\|\nabla\left(\boldsymbol{u}-I_{h}\boldsymbol{u}\right)\right\|+\left\|\nabla\boldsymbol{\eta}\right\|+h |||\boldsymbol{\eta}|||_{2}\right).
\end{displaymath}
This completes the proof.
\end{proof}
Finally, based on the results in \cref{mainresulttheorem}, \cref{consistencylemma} and \cref{piestimatelemma}, as well as the approximation properties of $I_{h}$ and $\Pi_{h}^{S}$ \cite{brenner_mathematical_2008,girault_finite_1986}, we get the convergence rates of the kinetic and dissipation energy errors of $\boldsymbol{u}_{h}$ (or $\Pi_{h}\boldsymbol{u}_{h}$) for \cref{EMAPRsemidiscreteform}, with the elements and reconstruction operators in this section.
\begin{corollary}
Let $\left(\boldsymbol{u},p\right)$ be the solution of \cref{weakcontinuousform} and $\left(\boldsymbol{u}_{h},p_{h}\right)$ be the solution of \cref{EMAPRsemidiscreteform}, with the elements and reconstruction operators used in \Cref{sec:4}. Suppose $\boldsymbol{u}\in L^{\infty}\left(J;\boldsymbol{H}^{k+1}\left(\Omega\right)\right)\cap L^{2}\left(J;\boldsymbol{W}^{1,\infty}\left(\Omega\right)\right)$, $\boldsymbol{u}_{t}\in L^{2}\left(J;\boldsymbol{H}^{k+1}\left(\Omega\right)\right)$. Under \cref{thirdassumption} and the assumption that $\boldsymbol{u}_{h}^{0}=\Pi_{h}^{S}\boldsymbol{u}^{0}$, the following estimate holds:
\begin{equation}\label{maincorollary}
\left\|\boldsymbol{u}-\boldsymbol{u}_{h}\right\|_{L^{\infty}\left(J; \boldsymbol{L}^{2}\left(\Omega\right)\right)}+\nu^{\frac{1}{2}}\left\|\nabla\left(\boldsymbol{u}-\boldsymbol{u}_{h}\right)\right\|_{L^{2}\left(J; \boldsymbol{L}^{2}\left(\Omega\right)\right)}
\leq B\left(\boldsymbol{u},T\right)h^{k},
\end{equation}
where
\begin{displaymath}
\begin{aligned}
B&\left(\boldsymbol{u},T\right)=C \bigg\{h\left|\boldsymbol{u}\right|_{L^{\infty}\left(J; \boldsymbol{H}^{k+1}\left(\Omega\right)\right)}+\left|\boldsymbol{u}\right|_{L^{2}\left(J;\boldsymbol{H}^{k+1}\left(\Omega\right)\right)}
+e^{\frac{1}{2}G\left(\boldsymbol{u},T\right)}\Big[\\& h\left|\boldsymbol{u}_{t}\right|_{L^{2}\left(J;\boldsymbol{H}^{k+1}\left(\Omega\right)\right)}+
\left|\boldsymbol{u}\right|_{L^{2}\left(J;\boldsymbol{H}^{k+1}\left(\Omega\right)\right)}+\left\|\boldsymbol{u}\right\|_{L^{2}
\left(J;\boldsymbol{W}^{1,\infty}\left(\Omega\right)\right)}\left|\boldsymbol{u}\right|_{L^{4}\left(J;\boldsymbol{H}^{k+1}\left(\Omega\right)\right)}\Big]\bigg\},
\end{aligned}
\end{displaymath}
with $C$ independent of $p$, $h$ and inverse powers of $\nu$.
\end{corollary}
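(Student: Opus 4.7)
The plan is to substitute each ingredient on the right-hand side of \cref{mainresult} by its $O(h^{k})$ bound coming from \cref{consistencylemma,piestimatelemma} together with the standard approximation estimates for $I_{h}$ and $\Pi_{h}^{S}$, then extract the two norms on the left of \cref{maincorollary}. First I would recall that $\boldsymbol{u}(t)\in \boldsymbol{H}^{k+1}(\Omega)$ yields $\|\boldsymbol{u}-I_{h}\boldsymbol{u}\|+h\|\nabla(\boldsymbol{u}-I_{h}\boldsymbol{u})\|\leq Ch^{k+1}|\boldsymbol{u}|_{k+1}$, the analogous Stokes-projection bounds $\|\boldsymbol{\eta}\|+h\|\nabla\boldsymbol{\eta}\|\leq Ch^{k+1}|\boldsymbol{u}|_{k+1}$ from \cite{girault_finite_1986}, and the broken-seminorm estimate $|||\boldsymbol{\eta}|||_{2}\leq Ch^{k-1}|\boldsymbol{u}|_{k+1}$. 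Feeding these into \cref{piestimate,piestimate1,piestimater,seminormestimate} gives $\|\Pi_{h}\boldsymbol{\eta}\|,\,\|\Pi_{h}^{R}\boldsymbol{\eta}\|\leq Ch^{k+1}|\boldsymbol{u}|_{k+1}$ and $\|\nabla\Pi_{h}^{1}\boldsymbol{\eta}\|,\,|||\boldsymbol{\eta}|||_{*}\leq Ch^{k}|\boldsymbol{u}|_{k+1}$, so that in particular $E_{d}(\boldsymbol{\eta})\leq Ch^{2(k+1)}|\boldsymbol{u}|_{k+1}^{2}$ and the same chain applied to $\boldsymbol{u}_{t}$ controls $E_{d}(\boldsymbol{\eta}_{t})$. \Cref{consistencylemma} then supplies $\|\Delta\boldsymbol{u}\circ(1-\Pi_{h})\|_{(V_{h}^{0})'}\leq Ch^{k}|\boldsymbol{u}|_{k+1}$.

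Next I would integrate in time. The terms not multiplied by $\|\boldsymbol{u}\|_{1,\infty}$ give contributions of the form $h^{2}|\boldsymbol{u}|_{L^{\infty}(J;\boldsymbol{H}^{k+1})}^{2}h^{2k}$, $|\boldsymbol{u}|_{L^{2}(J;\boldsymbol{H}^{k+1})}^{2}h^{2k}$, and $h^{2}|\boldsymbol{u}_{t}|_{L^{2}(J;\boldsymbol{H}^{k+1})}^{2}h^{2k}$; the convective contribution I would estimate by H\"older's inequality as $\int_{0}^{T}\|\boldsymbol{u}\|_{1,\infty}|\boldsymbol{u}|_{k+1}^{2}\,dt \leq \|\boldsymbol{u}\|_{L^{2}(J;\boldsymbol{W}^{1,\infty})}|\boldsymbol{u}|_{L^{4}(J;\boldsymbol{H}^{k+1})}^{2}$, which is exactly what produces the $L^{4}$-in-time norm in $B(\boldsymbol{u},T)$. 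Collecting these pieces and taking square roots yields the announced $B(\boldsymbol{u},T)h^{k}$ bound for $E_{d}(e_{h}(T))+\tfrac{\nu}{2}\int_{0}^{T}\|\nabla e_{h}\|^{2}\,dt$; the exponential $e^{G(\boldsymbol{u},T)}$ is itself independent of inverse powers of $\nu$, so the whole right-hand side is $Re$-semi-robust.

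Finally I would extract the norms on the left of \cref{maincorollary}. Since \cref{mainresult} is valid with $T$ replaced by any $t\in J$, the bound holds pointwise in time, and the dissipative part $\nu^{1/2}\|\nabla(\boldsymbol{u}-\boldsymbol{u}_{h})\|_{L^{2}(J;\boldsymbol{L}^{2})}$ follows directly from the second term on the left. For the $L^{\infty}(J;\boldsymbol{L}^{2})$ part I would split $\boldsymbol{u}-\boldsymbol{u}_{h}=\boldsymbol{\eta}+\boldsymbol{\phi}_{h}$, handle $\|\boldsymbol{\eta}\|$ by approximation, and bound $\|\boldsymbol{\phi}_{h}\|$ by $\sqrt{E_{d}(\boldsymbol{\phi}_{h})}$ up to a constant. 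The inequality $\|\Pi_{h}^{1}\boldsymbol{\phi}_{h}\|^{2}\leq(4+4/\alpha)E_{d}(\boldsymbol{\phi}_{h})$ from \cite{brezzi_mixed_2005} controls the conforming part, and on the simplicial elements of \cref{sec:4} the bubble remainder $\boldsymbol{\phi}_{h}-\Pi_{h}^{1}\boldsymbol{\phi}_{h}=\boldsymbol{\phi}_{h}-I_{h}\boldsymbol{\phi}_{h}$ lives in a finite-dimensional local space on which $\Pi_{h}^{R}=\Pi_{h}^{RT}\circ(1-I_{h})$ is injective (by a face-moment rank count). A scaled local norm equivalence on shape-regular simplices then yields $\|\boldsymbol{\phi}_{h}-\Pi_{h}^{1}\boldsymbol{\phi}_{h}\|_{K}\leq C(\xi)\|\Pi_{h}^{R}\boldsymbol{\phi}_{h}\|_{K}$, so that $\|\boldsymbol{\phi}_{h}\|^{2}\leq C_{\alpha,\xi}E_{d}(\boldsymbol{\phi}_{h})$ and the claim closes.

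The main obstacle I anticipate is precisely this last step: the discrete energy $E_{d}$ naturally measures only the reconstructed parts $\Pi_{h}\boldsymbol{\phi}_{h}$ and $\Pi_{h}^{R}\boldsymbol{\phi}_{h}$, so passing from the abstract bound in \cref{mainresulttheorem} to a bound on $\boldsymbol{\phi}_{h}$ itself requires exploiting structural features of the concrete elements of \cref{sec:4} (namely, that the bubble complement of $I_{h}$ is seen by $\Pi_{h}^{RT}$ with an $h$-uniform constant). The other steps amount to inserting the lemmas of \cref{sec:3,sec:4} and choosing the correct time-integrability exponents on $\boldsymbol{u}$.
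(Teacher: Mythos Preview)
Your proposal is correct and follows exactly the approach the paper indicates: the paper gives no detailed proof of this corollary, merely stating that it follows by inserting the bounds of \cref{consistencylemma} and \cref{piestimatelemma} and the standard approximation properties of $I_{h}$ and $\Pi_{h}^{S}$ into \cref{mainresulttheorem}, which is precisely what you do. You even go beyond the paper by correctly isolating and resolving the one genuinely nontrivial point---passing from the discrete energy $E_{d}(\boldsymbol{\phi}_{h})$ back to $\|\boldsymbol{\phi}_{h}\|$ via the injectivity of $\Pi_{h}^{RT}$ on the bubble complement $(1-I_{h})V_{h}$ and a scaled local norm equivalence---which the paper leaves implicit.
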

\section{Numerical experiments}
\label{sec:5}
\subsection{Example 1: convergence test and pressure-robustness test}
\label{sec:51}
For the first example we consider the potential flow in \cite[Example 6]{linke2016}. On $\Omega=\left(0,1\right)^{2}$ the velocity is prescribed as $\boldsymbol{u}=\min\left\{t,1\right\}\nabla\chi$ with $\chi=x^{3}y-y^{3}x$. We set $\boldsymbol{f}=0$ such that the pressure gradient exactly balances the gradient filed produced by the velocity terms. Due to the quadratic convective term, the pressure is much more complicated than the velocity \cite{gauger_high-order_2019}. The pressure-robustness will paly a key role on the accuracy of the simulations of this problem. We use this example to show the convergence rates and the pressure-robustness of our method.

We consider the case of $\nu=5\times 10^{-4}$ and apply the first order Bernardi-Raugel element and the second order element mentioned in \cref{sec:4}, $P_{2}^{bubble}/P_{1}^{disc}$. For the time-stepping, the BDF2 scheme is used. For convergence test, here we only consider the spatial effects and neglect the effects of the time discretizations by choosing a small time step: $\Delta t=0.001$ and $T=0.1$. A non-uniform initial mesh is used, which consists of 132 triangles and produces total 506 DOFs for Bernardi-Raugel element and 1246 DOFs for $P_{2}^{bubble}/P_{1}^{disc}$. The pressure-robustness test is performed on the double refinement of the initial mesh, with the time step $\Delta t=0.01$ and $T=2$.
We also give some results from the classical scheme and the pressure-robust reconstruction scheme in \cite{linke2016} with the convective form for the nonlinear term.
In each time step, we solve a linear problem by replacing the advective velocity in trilinear forms with some appropriate extrapolation of the previous step velocities.

Finally, to show the pressure-robustness of our methods further, we also compute the problem with $\boldsymbol{f}=100\nabla\chi$, which only change the pressure in the continuous problem and the discrete pressure for pressure-robust methods. Some results are shown in \cref{tab:1,tab:2,tab:3,tab:4,tab:5,tab:6,tab:7}. For pressure-robust tests, we only show the results with $\alpha=0$. The results below, especially the results in \cref{tab:7}, demonstrate that our methods are robust with respect to the continuous pressure.

For second order tests, we find that our methods give a little worse result than the pressure-robust reconstruction methods in \cite{linke2016}. This is reasonable since we alter the discretization of the convective form. Although our methods are EMA-conserving, these advantages is not easy to shown in a potential flow unless we use a much smaller viscosity.
\begin{table}
\centering
\setlength\tabcolsep{3pt}
\caption{Example 1. Errors by the Bernardi-Raugel element with $\alpha=0, T=0.1$.}
\begin{tabular}{ccccccccc}
\hline level & $\left\|\boldsymbol{u}-\boldsymbol{u}_{h}\right\|$ & eoc & $\left\|\boldsymbol{u}-\Pi_{h}\boldsymbol{u}_{h}\right\|$ & eoc & $\left\|\nabla\left(\boldsymbol{u}-\boldsymbol{u}_{h}\right)\right\|$ & eoc & $\left\|p-p_{h}\right\|$ & eoc \\
\hline 0 & $4.3762  ${e}-$4 $ & $-$ & $5.5822${e}-$4$ & $-$ & $2.7562${e}-$2 $ & $-$ & $2.2363  ${e}-$2 $ & $-$ \\
\hline 1 & $1.0781 ${e}-$4 $ & $2.02 $ & $1.4006 ${e}-$4 $ & $1.99 $ & $1.3462${e}-$2 $ & $1.03$ & $1.1242 ${e}-$2 $ & $0.99$ \\
\hline 2 & $2.6214${e}-$5 $ & $2.04 $ & $3.4968${e}-$5$ & $2.00 $ & $6.5417 ${e}-$3 $ & $1.04$ & $5.6197 ${e}-$3 $ & $1.00$ \\
\hline 3 & $6.4910${e}-$6 $ & $2.01 $ & $8.7537 ${e}-$6$ & $1.99 $ & $3.2321 ${e}-$3 $ & $1.01$ & $2.8093 ${e}-$3$ & $1.00$ \\
\hline
\end{tabular}
\label{tab:1}
\end{table}
\begin{table}
\centering
\setlength\tabcolsep{3pt}
\caption{Example 1. Errors by $P_{2}^{bubble}/P_{1}^{disc}$ with $\alpha=0,T=0.1$.}
\begin{tabular}{ccccccccc}
\hline level & $\left\|\boldsymbol{u}-\boldsymbol{u}_{h}\right\|$ & eoc & $\left\|\boldsymbol{u}-\Pi_{h}\boldsymbol{u}_{h}\right\|$ & eoc & $\left\|\nabla\left(\boldsymbol{u}-\boldsymbol{u}_{h}\right)\right\|$ & eoc & $\left\|p-p_{h}\right\|$ & eoc \\
\hline 0 & $1.1624 ${e}-$5 $ & $-$ & $1.4688${e}-$5$ & $-$ & $9.4495${e}-$4 $ & $-$ & $1.2894 ${e}-$3 $ & $-$ \\
\hline 1 & $1.4175${e}-$6 $ & $3.03 $ & $1.9025 ${e}-$6$ & $2.94 $ & $2.2039${e}-$4 $ & $2.10$ & $3.2351${e}-$4$ & $1.99$ \\
\hline 2 & $1.8333 ${e}-$7 $ & $2.95 $ & $2.4828 ${e}-$7 $ & $2.93$ & $5.4150${e}-$5$ & $2.02$ & $8.0835 ${e}-$5 $ & $2.00$ \\
\hline 3 & $2.3777 ${e}-$8 $ & $2.94 $ & $3.2014 ${e}-$8 $ & $2.95$ & $1.3519 ${e}-$5$ & $2.00$ & $2.0233${e}-$5 $ & $1.99$ \\
\hline
\end{tabular}
\label{tab:2}
\end{table}
\begin{table}
\centering
\setlength\tabcolsep{3pt}
\caption{Example 1. Errors by the Bernardi-Raugel element with $\alpha=1,T=0.1$.}
\begin{tabular}{ccccccccc}
\hline level & $\left\|\boldsymbol{u}-\boldsymbol{u}_{h}\right\|$ & eoc & $\left\|\boldsymbol{u}-\Pi_{h}\boldsymbol{u}_{h}\right\|$ & eoc & $\left\|\nabla\left(\boldsymbol{u}-\boldsymbol{u}_{h}\right)\right\|$ & eoc & $\left\|p-p_{h}\right\|$ & eoc \\
\hline 0 & $4.7428${e}-$4 $ & $-$ & $5.7279${e}-$4$ & $-$ & $2.8068${e}-$2 $ & $-$ & $2.2364 ${e}-$2 $ & $-$ \\
\hline 1 & $1.0993 ${e}-$4 $ & $2.10 $ & $1.4107 ${e}-$4 $ & $2.02 $ & $1.3287${e}-$2 $ & $1.07$ & $1.1242${e}-$2 $ & $0.99$ \\
\hline 2 & $2.6469${e}-$5 $ & $2.05 $ & $3.5056${e}-$5$ & $2.00 $ & $6.5124 ${e}-$3 $ & $1.02$ & $5.6197 ${e}-$3 $ & $1.00$ \\
\hline 3 & $6.5237${e}-$6 $ & $2.02 $ & $8.7597${e}-$6$ & $2.00 $ & $3.2315 ${e}-$3 $ & $1.01$ & $2.8093${e}-$3$ & $1.00$ \\
\hline
\end{tabular}
\label{tab:3}
\end{table}
\begin{table}
\centering
\setlength\tabcolsep{3pt}
\caption{Example 1. Errors by $P_{2}^{bubble}/P_{1}^{disc}$ with $\alpha=1,T=0.1$.}
\begin{tabular}{ccccccccc}
\hline level & $\left\|\boldsymbol{u}-\boldsymbol{u}_{h}\right\|$ & eoc & $\left\|\boldsymbol{u}-\Pi_{h}\boldsymbol{u}_{h}\right\|$ & eoc & $\left\|\nabla\left(\boldsymbol{u}-\boldsymbol{u}_{h}\right)\right\|$ & eoc & $\left\|p-p_{h}\right\|$ & eoc \\
\hline 0 & $1.2393${e}-$5 $ & $-$ & $1.6257${e}-$5$ & $-$ & $9.1896${e}-$4 $ & $-$ & $1.2894 ${e}-$3 $ & $-$ \\
\hline 1 & $1.4720${e}-$6 $ & $3.07 $ & $2.0064 ${e}-$6$ & $3.01 $ & $2.1807${e}-$4 $ & $2.07$ & $3.2351${e}-$4$ & $1.99$ \\
\hline 2 & $1.8534 ${e}-$7 $ & $2.98 $ & $2.5244 ${e}-$7 $ & $2.99 $ & $5.4057${e}-$5$ & $2.01$ & $8.0835 ${e}-$5 $ & $2.00$ \\
\hline 3 & $2.3864 ${e}-$8 $ & $2.95$ & $3.2125${e}-$8 $ & $2.97 $ & $1.3523 ${e}-$5$ & $1.99$ & $2.0233${e}-$5 $ & $1.99$ \\
\hline
\end{tabular}
\label{tab:4}
\end{table}
\begin{table}
\centering
\caption{Example 1. Errors by the Bernardi-Raugel element with $\alpha=0$ on mesh level 2 (classical methods/EMAPR/pressure-robust reconstructions).}
\begin{tabular}{cccc}
\hline t & $\left\|\boldsymbol{u}-\boldsymbol{u}_{h}\right\|$ &  $\left\|\nabla\left(\boldsymbol{u}-\boldsymbol{u}_{h}\right)\right\|$  & $\left\|P_{h}p-p_{h}\right\|$ \\
\hline 0.5 & $1.84${e}-$2 $/$1.61${e}-$4$/$1.62${e}-$4 $ & $3.66$/$3.72${e}-$2 $/$3.86 ${e}-$2 $
  & $1.63${e}-$2 $/$5.02${e}-$5 $/$4.70 ${e}-$5 $ \\
\hline 1 & $4.35 ${e}-$2 $/$4.47 ${e}-$4 $/$4.71 ${e}-$4$ & $8.47$/$8.73 ${e}-$2 $/$9.83${e}-$2 $
  & $5.59${e}-$2 $/$2.46 ${e}-$4$/$2.29${e}-$4 $ \\
\hline 1.5 & $5.50 ${e}-$2 $/$5.56 ${e}-$4$/$6.07 ${e}-$4$ & $9.24$/$8.89 ${e}-$2 $/$0.10$
  & $4.02${e}-$2 $/$2.62 ${e}-$4$/$2.37${e}-$4 $ \\
\hline 2.0 & $5.53 ${e}-$2 $/$5.98${e}-$4 $/$7.37 ${e}-$4 $ & $9.24$/$8.90 ${e}-$2 $/$0.10$
  & $4.09${e}-$2 $/$2.84${e}-$4$/$2.68${e}-$4$ \\
\hline
\end{tabular}
\label{tab:5}
\end{table}
\begin{table}
\centering
\caption{Example 1. Errors by $P_{2}^{bubble}/P_{1}^{disc}$ with $\alpha=0$ on mesh level 2 (classical methods/EMAPR/pressure-robust reconstructions).}
\begin{tabular}{cccc}
\hline t & $\left\|\boldsymbol{u}-\boldsymbol{u}_{h}\right\|$ &  $\left\|\nabla\left(\boldsymbol{u}-\boldsymbol{u}_{h}\right)\right\|$  & $\left\|P_{h}p-p_{h}\right\|$ \\
\hline 0.5 & $4.85${e}-$5 $/$1.61${e}-$6 $/$1.20 ${e}-$6 $ & $1.21${e}-$2 $/$4.55 ${e}-$4 $/$3.36 ${e}-$4$
  & $6.94${e}-$5 $/$1.09${e}-$6$/$5.80 ${e}-$7 $ \\
\hline 1 & $9.81${e}-$5 $/$4.95${e}-$6 $/$3.40${e}-$6 $ & $2.58 ${e}-$2 $/$1.34 ${e}-$3 $/$8.69${e}-$4 $
  & $1.50 ${e}-$4$/$5.14${e}-$6 $/$2.54 ${e}-$6 $ \\
\hline 1.5 & $1.15${e}-$4 $/$5.75 ${e}-$6$/$3.80 ${e}-$6$ & $2.65 ${e}-$2 $/$1.36 ${e}-$3 $/$8.95 ${e}-$4 $
  & $1.44 ${e}-$4 $/$5.29${e}-$6 $/$2.64${e}-$6$ \\
\hline 2.0 & $1.17 ${e}-$4 $/$6.07 ${e}-$6 $/$3.94${e}-$6$ & $2.66${e}-$2 $/$1.37 ${e}-$3 $/$8.98${e}-$4 $
  & $1.43${e}-$4 $/$5.47${e}-$6 $/$2.73 ${e}-$6$ \\
\hline
\end{tabular}
\label{tab:6}
\end{table}
\begin{table}
\centering
\caption{Example 1. Errors with $\alpha=0$ and $\boldsymbol{f}=100\nabla\chi$ on mesh level 2 (classical methods/EMAPR).}
\setlength\tabcolsep{3pt}
\begin{tabular}{cccccc}
\hline & \multicolumn{2}{c}{Bernardi-Raugel}  & &  \multicolumn{2}{c}{$P_{2}^{bubble}/P_{1}^{disc}$}  \\
\cline { 2 - 3 } \cline { 5 - 6 }$t$ &$\|\boldsymbol{u}-\boldsymbol{u}_{h}\|$ &$\|\nabla(\boldsymbol{u}-\boldsymbol{u}_{h})\|$ & &$\|\boldsymbol{u}-\boldsymbol{u}_{h}\|$ &$\|\nabla(\boldsymbol{u}-\boldsymbol{u}_{h})\|$ \\
\hline$0.5$    &$1.51$/$1.61${e}-$4$ &$265.73$/$3.72${e}-$2$ & &$4.80${e}-$3$/$1.61${e}-$6$ &$1.10$/$4.55${e}-$4$ \\
$1$            &$1.62$/$4.47${e}-$4$ &$285.33$/$8.73${e}-$2$ & &$3.74${e}-$3$/$4.95${e}-$6$ &$0.90$/$1.34${e}-$3$ \\
$1.5$          &$1.74$/$5.56${e}-$4$ &$284.10$/$8.89${e}-$2$ & &$3.78${e}-$3$/$5.75${e}-$6$ &$0.90$/$1.36${e}-$3$ \\
$2$            &$1.78$/$5.98${e}-$4$ &$280.08$/$8.90${e}-$2$ & &$3.86${e}-$3$/$6.07${e}-$6$ &$0.90$/$1.37${e}-$3$ \\
\hline
\end{tabular}
\label{tab:7}
\end{table}
\subsection{Example 2: EMA-conserving test: the Gresho problem}
\label{sec:52}
In the second example we consider the Gresho problem \cite{Rebholz2017,EMAC2019,gauger_high-order_2019}, which is a benchmark to test the EMA-conserving properties of a method. With $\boldsymbol{f}=\boldsymbol{0}$ and $\nu=0$, the exact solutions on $\Omega=\left(-0.5,0.5\right)^{2}$ are set as
$$
\begin{array}{c}
r \leq 0.2:\left\{\begin{array}{l}
\boldsymbol{u}=\left(\begin{array}{c}
-5 y \\
5 x
\end{array}\right), \\
p=12.5 r^{2}+\gamma
\end{array}\right.
0.2 \leq r \leq 0.4:\left\{\begin{array}{l}
\boldsymbol{u}=\left(\begin{array}{c}
-\frac{2 y}{r}+5 y \\
\frac{2 x}{r}-5 x
\end{array}\right), \\
p=12.5 r^{2}-20 r+4 \log \left(r\right)+\beta
\end{array}\right.
\end{array}
$$
and all vanish for $r>0.4$, where $r=\sqrt{x^{2}+y^{2}}$ and
$$
\beta=\left(-12.5\right)\left(0.4\right)^{2}+20\left(0.4\right)^{2}-4 \log \left(0.4\right), \gamma=\beta-20\left(0.2\right)+4 \log \left(0.2\right).
$$

We strongly enforce the no-penetration boundary condition in computations. And set $\alpha=0$ and $\alpha=1$ for Bernardi-Raugel element and $P_{2}^{bubble}/P_{1}^{disc}$, respectively. We find that for this problem the gradient of the velocity solution might be very large for higher order elements ($k\geq2$) if $\alpha=0$. This is the reason for the choice of $\alpha$. The Bernardi-Raugel element is tested on a uniform $48\times48$ triangular mesh and the $P_{2}^{bubble}/P_{1}^{disc}$ is tested on a non-uniform mesh with $h=1/25$. To highlight the conservative properties, we apply the Crank-Nicolson scheme for time discretizations with $\Delta t=0.01$ and $T=10$.

To make a comparison, we also compute the results from two classes of conservative methods: one is the EMAC formulation \cite{Rebholz2017,Rebholz2020} with the same elements and meshes as our methods, the other is the classical convective formulation but with the exactly divergence-free elements. For the first order divergence-free elements, we choose the element proposed by Guzm\'{a}n and Neilan in \cite{Neilan2018}, which is performed on the same mesh as the Bernardi-Raugel element. Note that the Guzm\'{a}n-Neilan element has the same DOFs as the Bernardi-Raugel element on a given mesh. The Guzm\'{a}n-Neilan element consists of linear pieceewise polynomials and some modified Bernardi-Raugel bubbles which are constructed on the barycentric refinement of each triangle. For the second order divergence-free elements, we choose the well-known Scott-Vogelius (SV2) element, $P_{2}/P_{1}^{disc}$ \cite{Arnold1992,john_divergence_2017}, which is run on the barycentric refinement of the mesh for $P_{2}^{bubble}/P_{1}^{disc}$ to guarantee the stability. For all methods we solve a nonlinear system in each step by Newton iterations or Picard iterations with a tolerance of $10^{-6}$ for $H^{1}$ norm.

Some results are shown in \cref{Greshofig1} and \cref{Greshofig2}, where the ``momentum" denotes the sum of all the components of the linear momentum. For EMAPR methods, all the quantities are computed by $\Pi_{h}\boldsymbol{u}_{h}$. For first order approximations, the pressure is approximated by piecewise constant. In this time, the effect of the lack of pressure-robustness for the EMAC formulation is obvious. For higher order approximations, all the methods give very similar results, since the continuous pressure is not very complicated (the maximum power is 2) and all the methods are EMA-conserving.
\begin{figure}[htbp]
\centering
\includegraphics[width=0.48\textwidth,height=4.2cm]{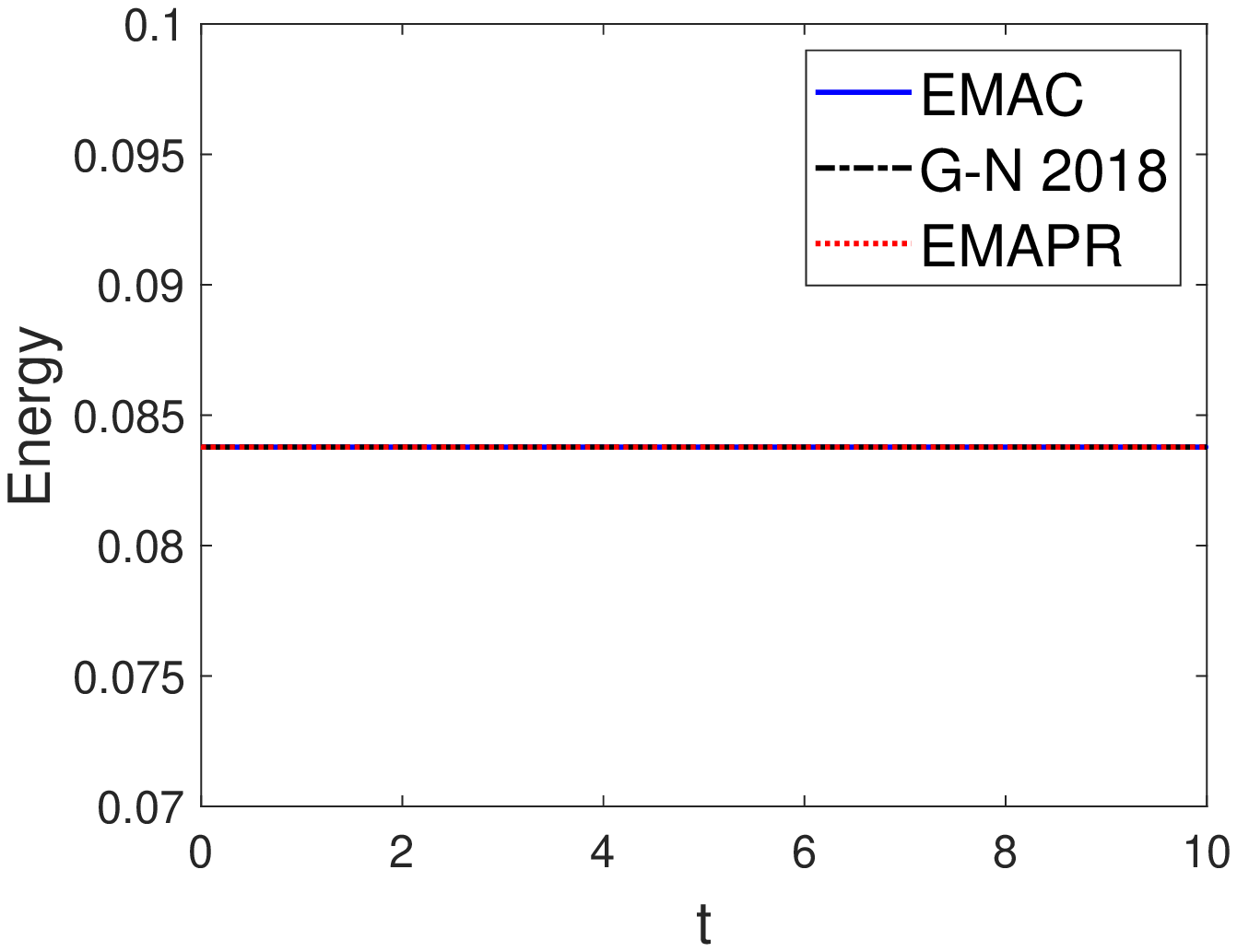}
\includegraphics[width=0.48\textwidth,height=4.2cm]{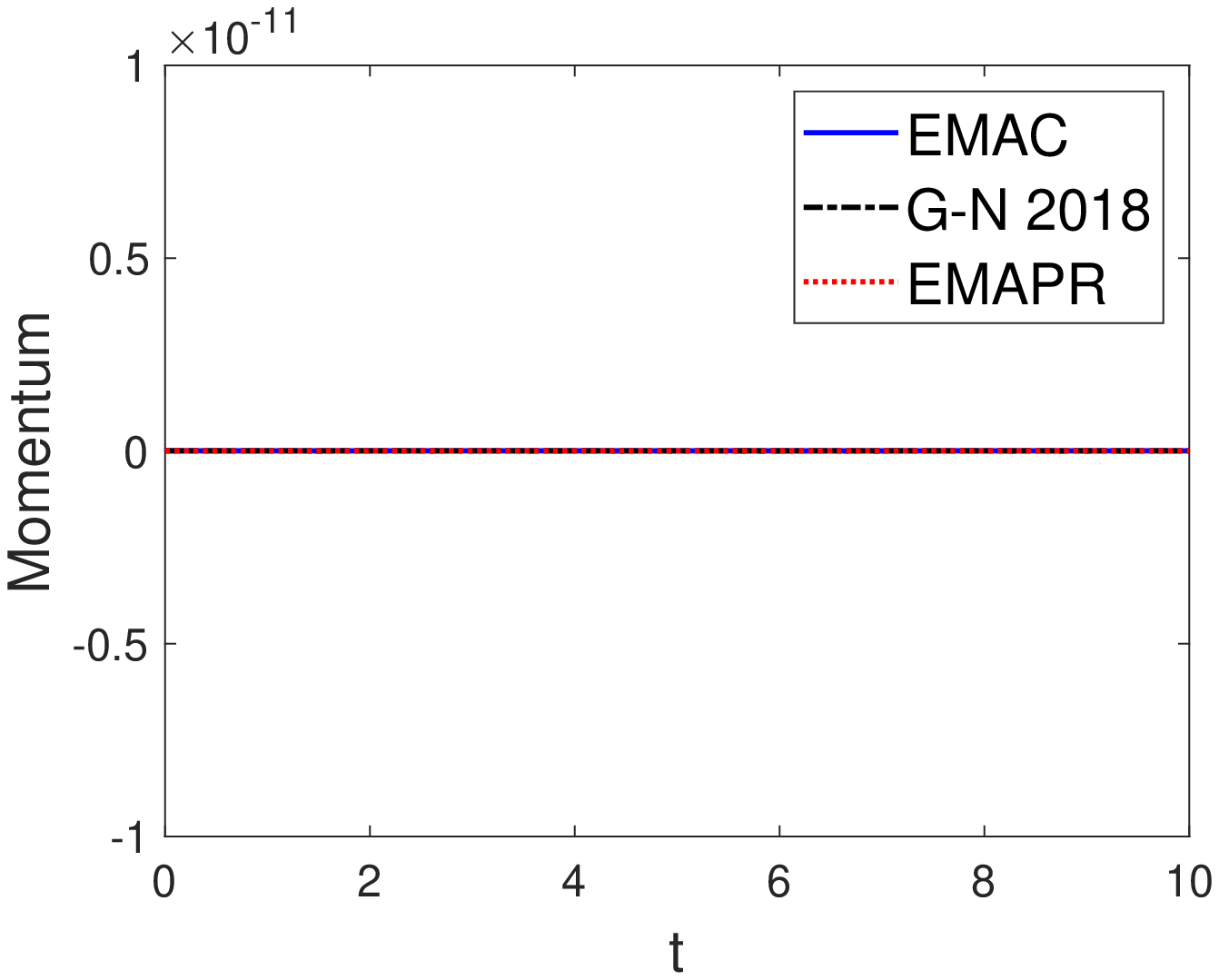}
\includegraphics[width=0.48\textwidth,height=4.2cm]{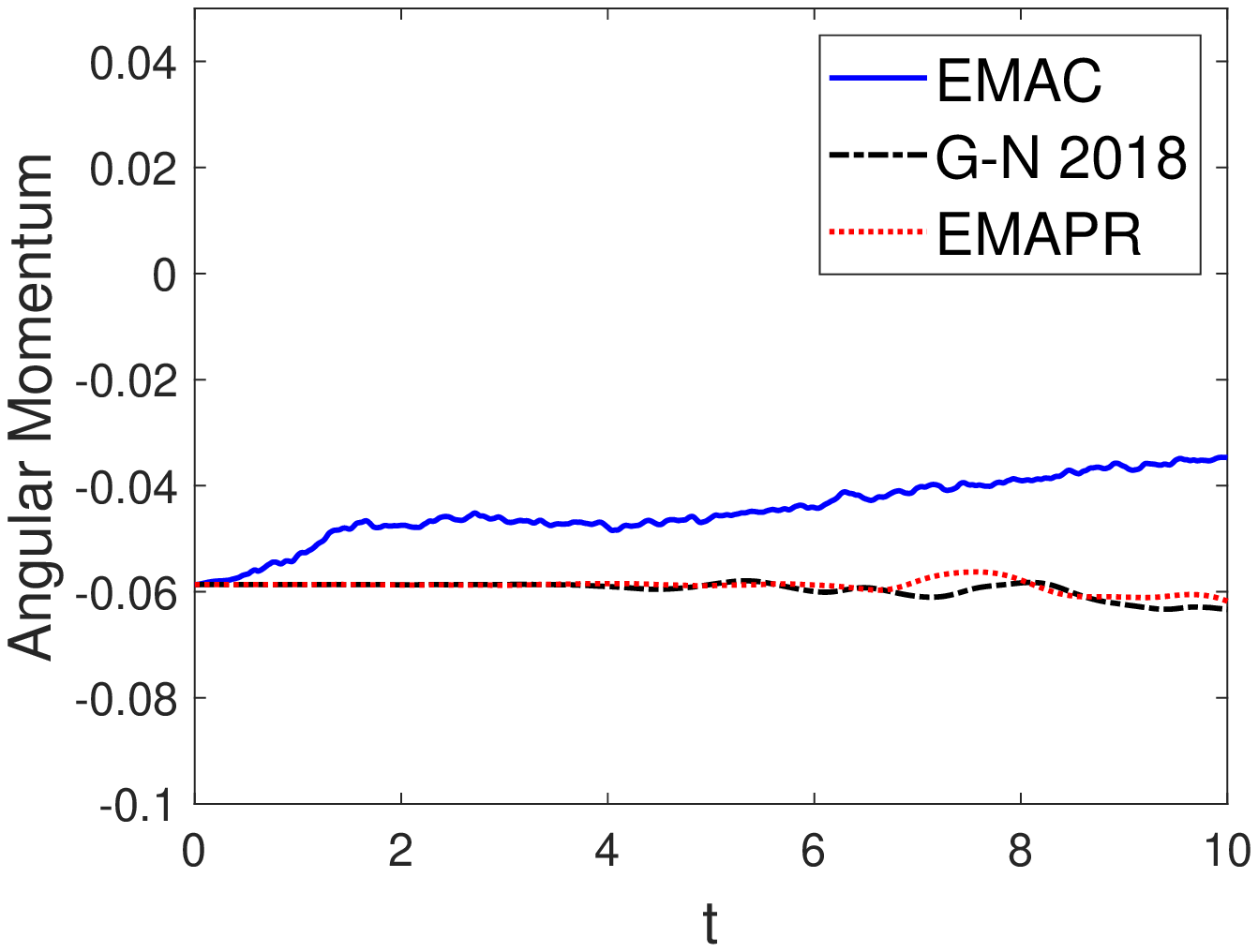}
\includegraphics[width=0.48\textwidth,height=4.2cm]{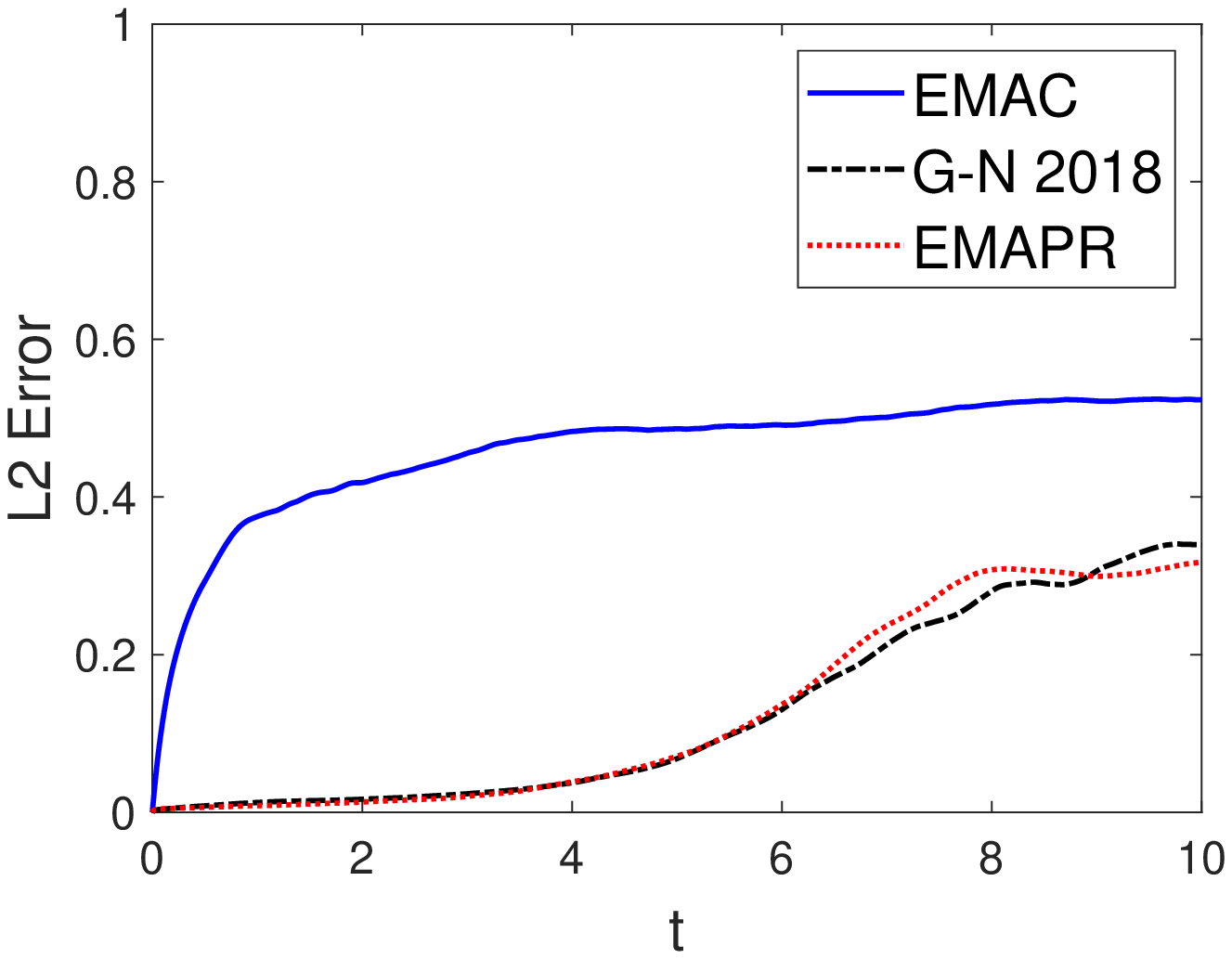}
\caption{Example 2. Plots of kinetic energy, momentum, angular momentum and L2 errors by the Bernardi-Raugel element or Guzm\'{a}n-Neilan element (G-N 2018) versus time.}
\label{Greshofig1}
\end{figure}
\begin{figure}[htbp]
\centering
\includegraphics[width=0.48\textwidth,height=4.2cm]{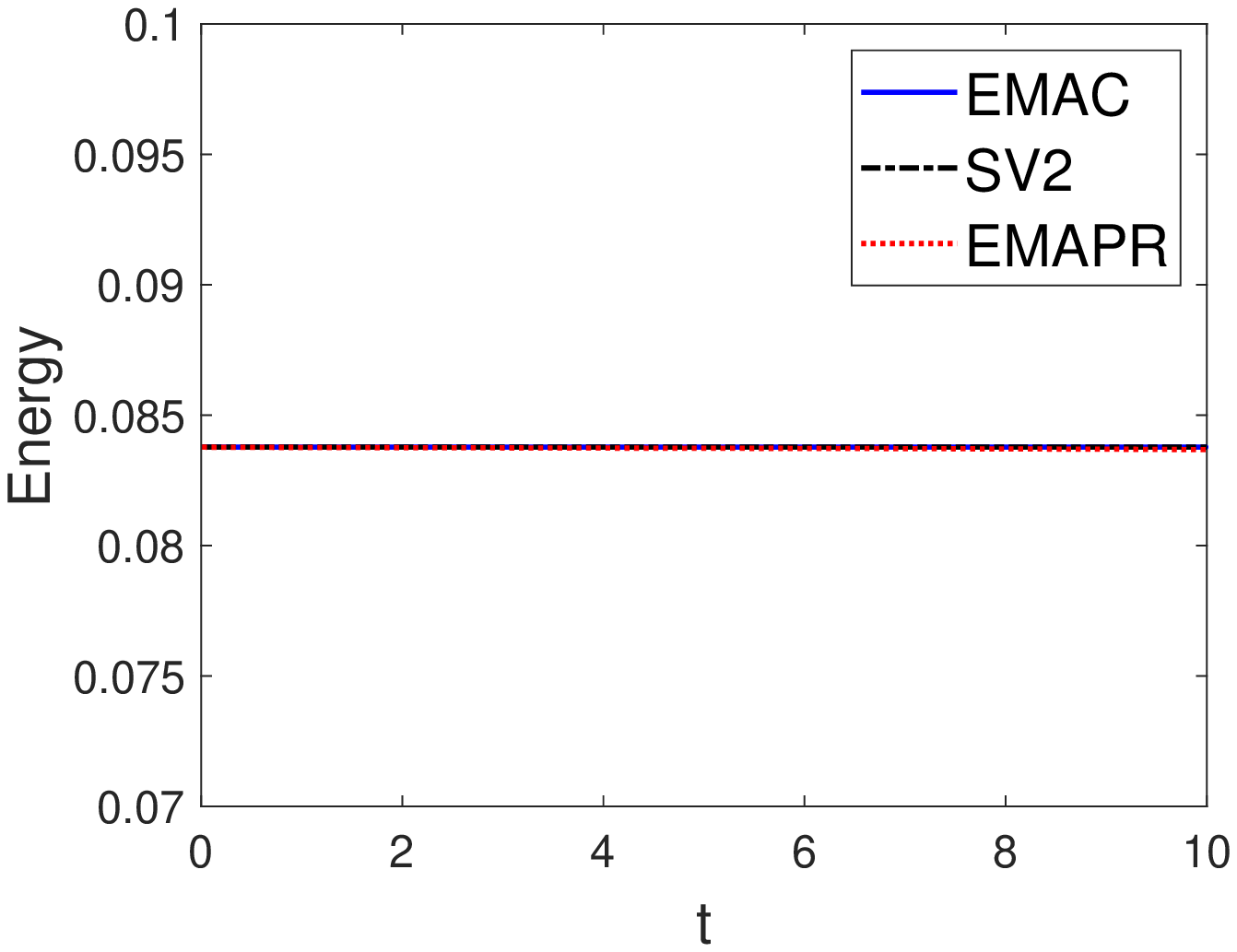}
\includegraphics[width=0.48\textwidth,height=4.2cm]{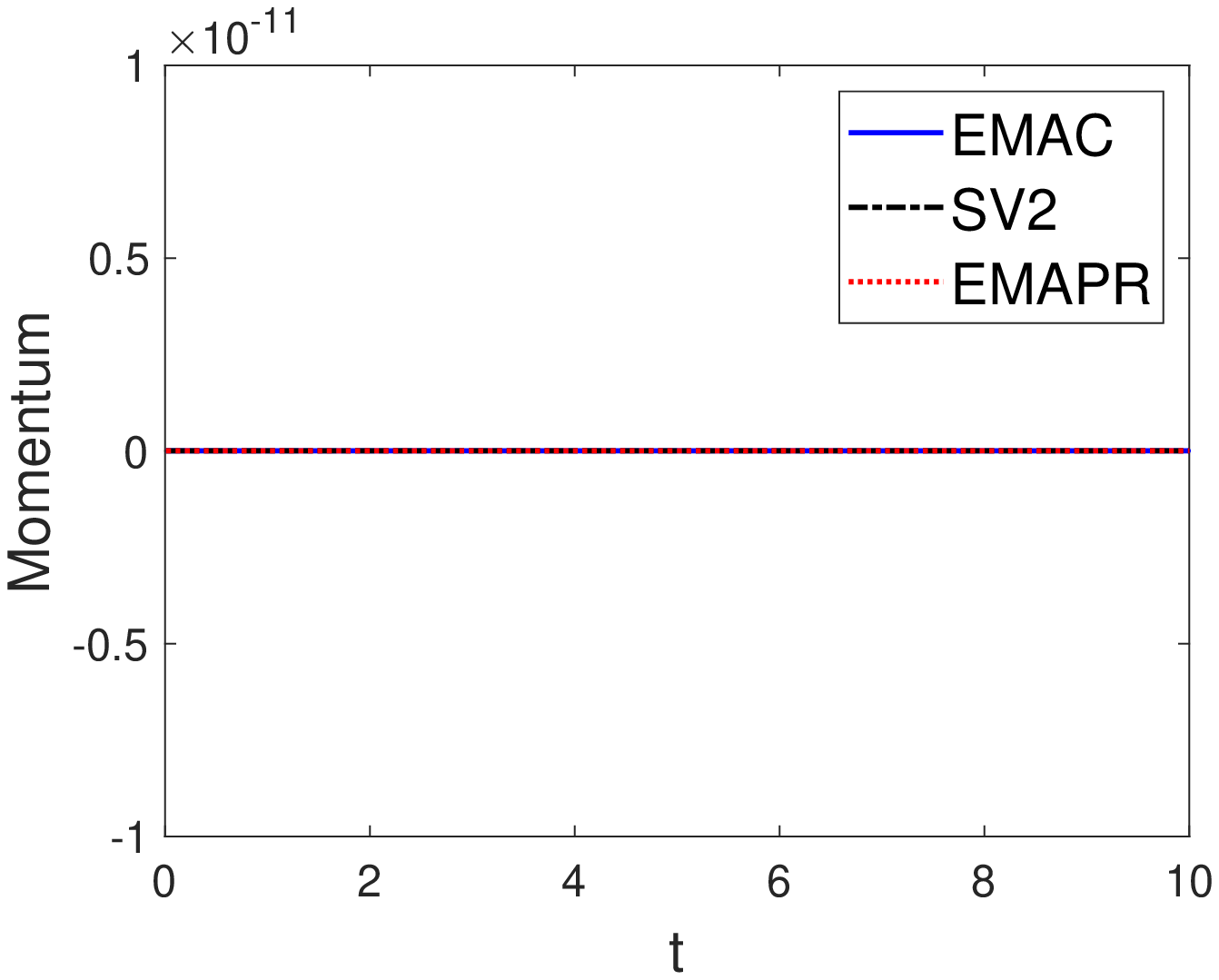}
\includegraphics[width=0.48\textwidth,height=4.2cm]{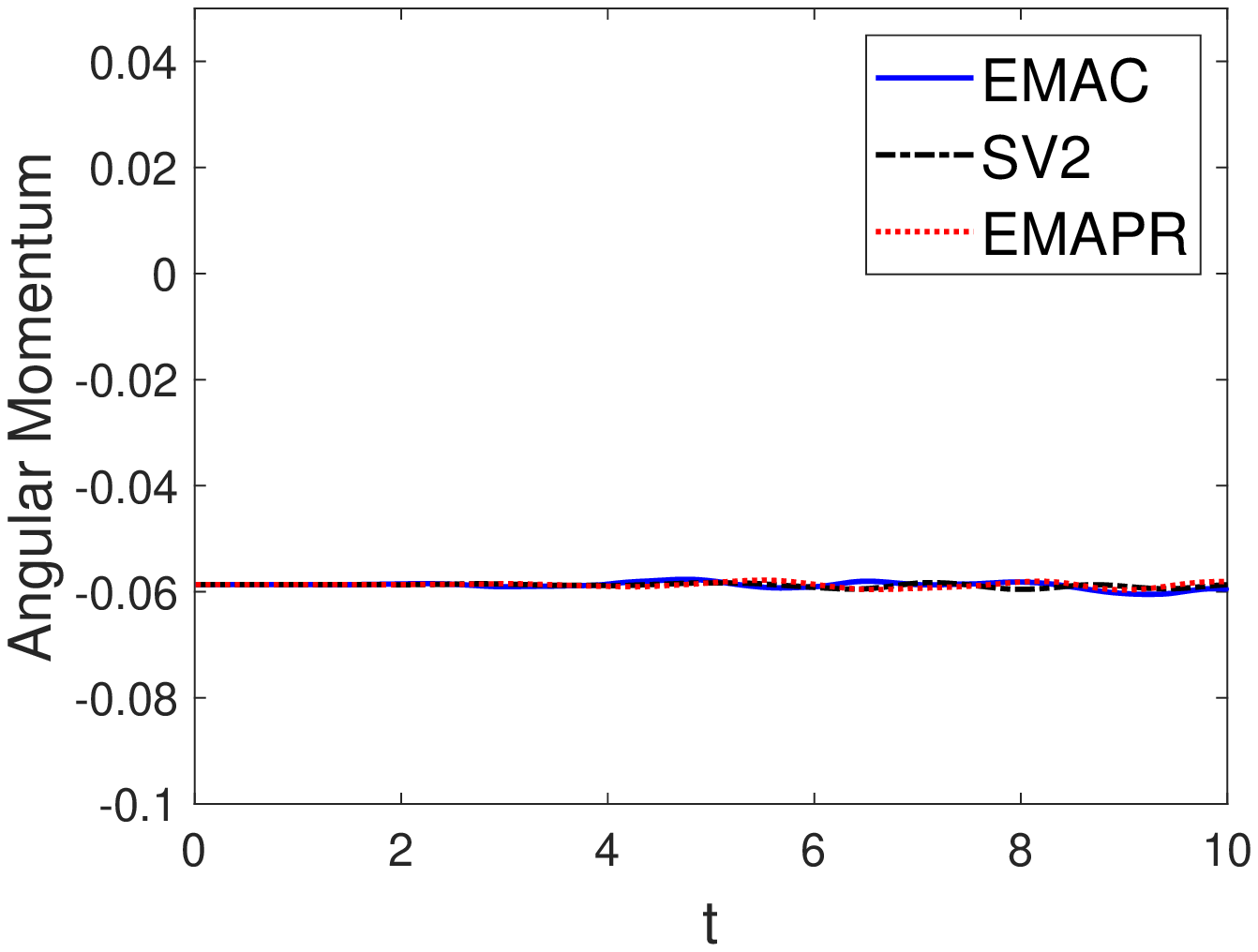}
\includegraphics[width=0.48\textwidth,height=4.2cm]{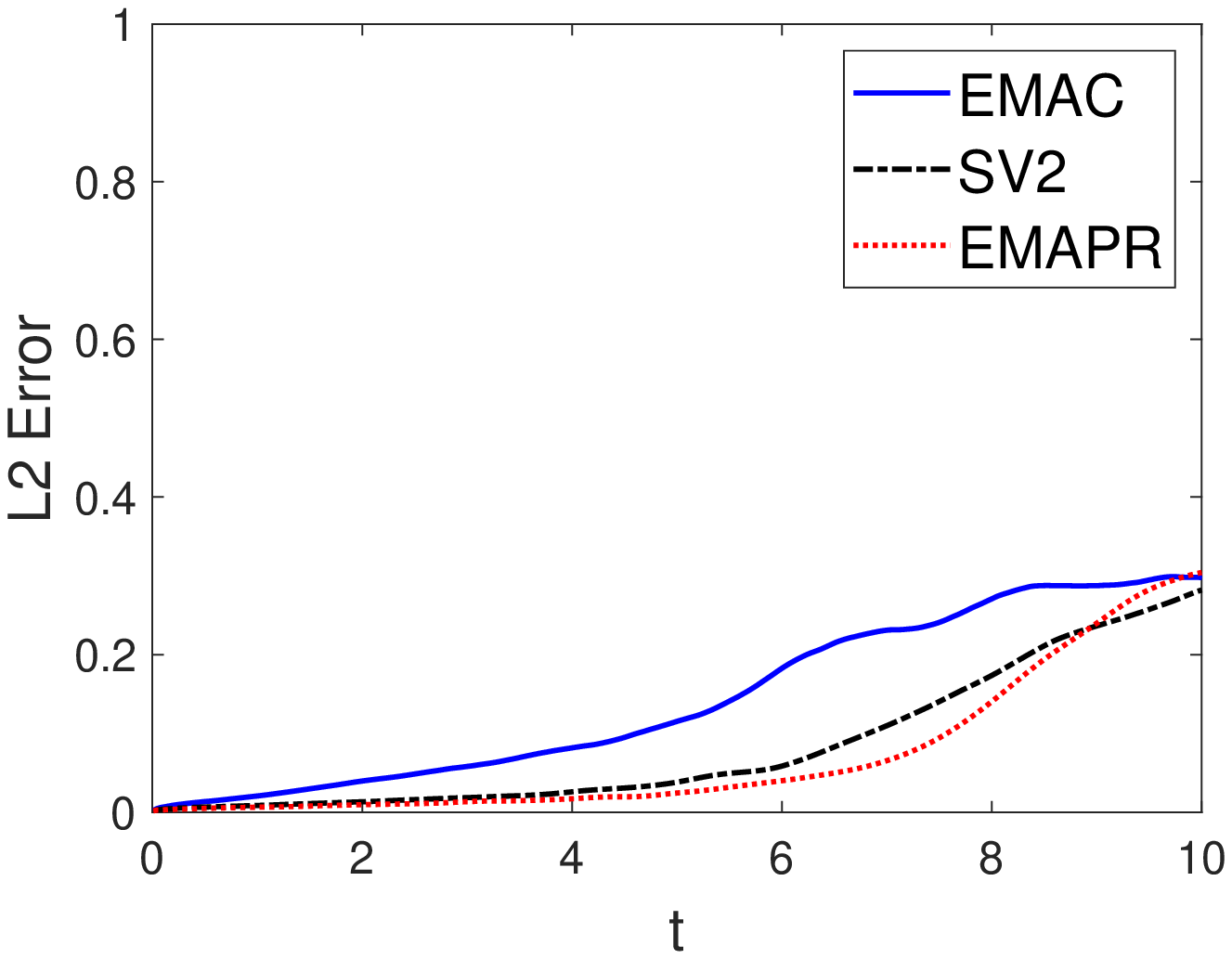}
\caption{Example 2. Plots of kinetic energy, momentum, angular momentum and L2 errors by $P_{2}^{bubble}/P_{1}^{disc}$ or SV2 versus time.}
\label{Greshofig2}
\end{figure}
\subsection{Example 3: $Re$-semi-robustness test: the lattice vortex problem}
\label{sec:53}
In the final example, we consider the lattice vortex problem \cite{Rebholz2020,schroeder_towards_2018} on $\Omega=\left(0,1\right)^{2}$, which is a benchmark to test the exponential growth rates (with respect to time) of the errors. In \cref{sec:4} we have shown that the Gronwall constant is independent of $\nu$. The exact velocity is set as $\boldsymbol{u}\left(t,\boldsymbol{x}\right)=\boldsymbol{u}^{0}\left(\boldsymbol{x}\right)\exp\left({-8\pi^{2}\nu t}\right)$ with $\boldsymbol{u}^{0}\left(\boldsymbol{x}\right)=\left(\sin\left(2\pi x\right)\sin\left(2\pi y\right),\cos\left(2\pi x\right)\cos\left(2\pi y\right)\right)^{\text{T}}$. With an appropriate $p$, $\boldsymbol{u}$ fulfills an exact unsteady NSE with $\boldsymbol{f}=\boldsymbol{0}$. To test the $Re$-semi-robustness, we choose a small $\nu$ and large $T$: $\nu=1\times10^{-5}$ and $T=10$.

The methods (or elements) used in this example are the same as \cref{sec:52}, except replacing the EMAC scheme with the classical skew-symmetric scheme (SKEW), which has been shown not to be $Re$-semi-robust with non-divergence-free elements \cite{Rebholz2020,schroeder_towards_2018}. All the first order methods are run on the uniform $64\times64$ triangular mesh. The $P_{2}^{bubble}/P_{1}^{disc}$ are tested on a non-uniform mesh with the size $h=0.03$, and the SV2 element is performed on the barycentric refinement of the same mesh. For our methods, we give the results for both $\alpha=0$ and $\alpha=1$. Note that from the theoretical analysis the value of $\alpha$ has an effect on the property of $Re$-semi-robustness. For the time discretizations, we use the Crank-Nicolson scheme with $\Delta t=0.001$. We linearize all the methods by replacing the first velocity in trilinear forms with some appropriate extrapolation of the previous step velocities.

Some results are shown in \cref{latticefig1,latticefig2}. One could find that the value of $\alpha$ does have an effect on the growth speed of the errors. For first order approximations, the methods (elements) proposed by Guzm\'{a}n and Neilan in \cite{Neilan2018} give the best results, and our method with $\alpha=1$ gives very close performance in the final time. For second order approximations, our method admits the best performance. Except the SKEW formulation on non-divergence-free elements, all the methods below show a slower growth speed of the errors.
\begin{figure}[htbp]
\centering
\includegraphics[width=0.48\textwidth,height=4.2cm]{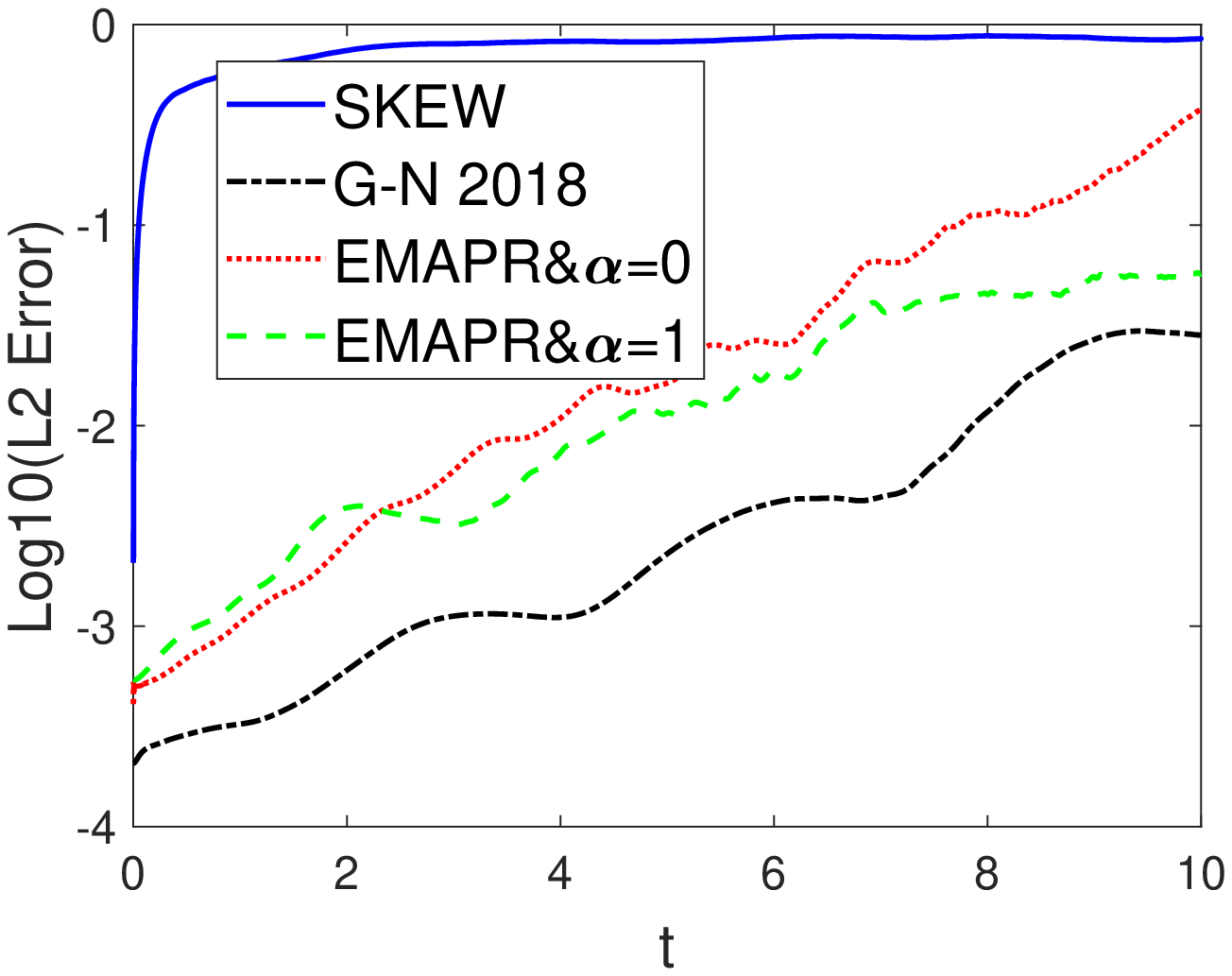}
\includegraphics[width=0.48\textwidth,height=4.2cm]{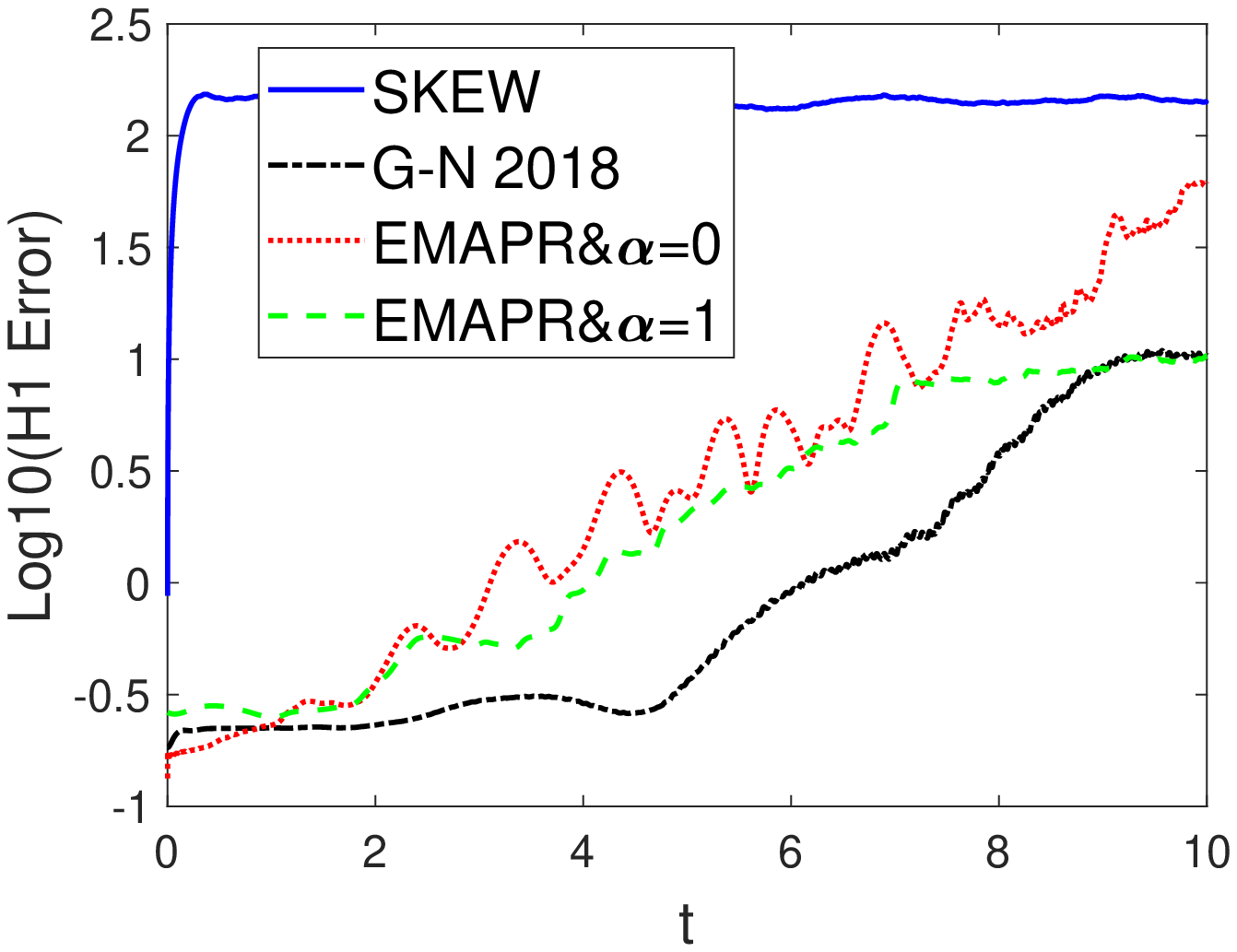}
\caption{Example 3. Plots of L2 errors, H1 errors by the Bernardi-Raugel element or Guzm\'{a}n-Neilan element versus time.}
\label{latticefig1}
\end{figure}
\begin{figure}[htbp]
\centering
\includegraphics[width=0.48\textwidth,height=4.2cm]{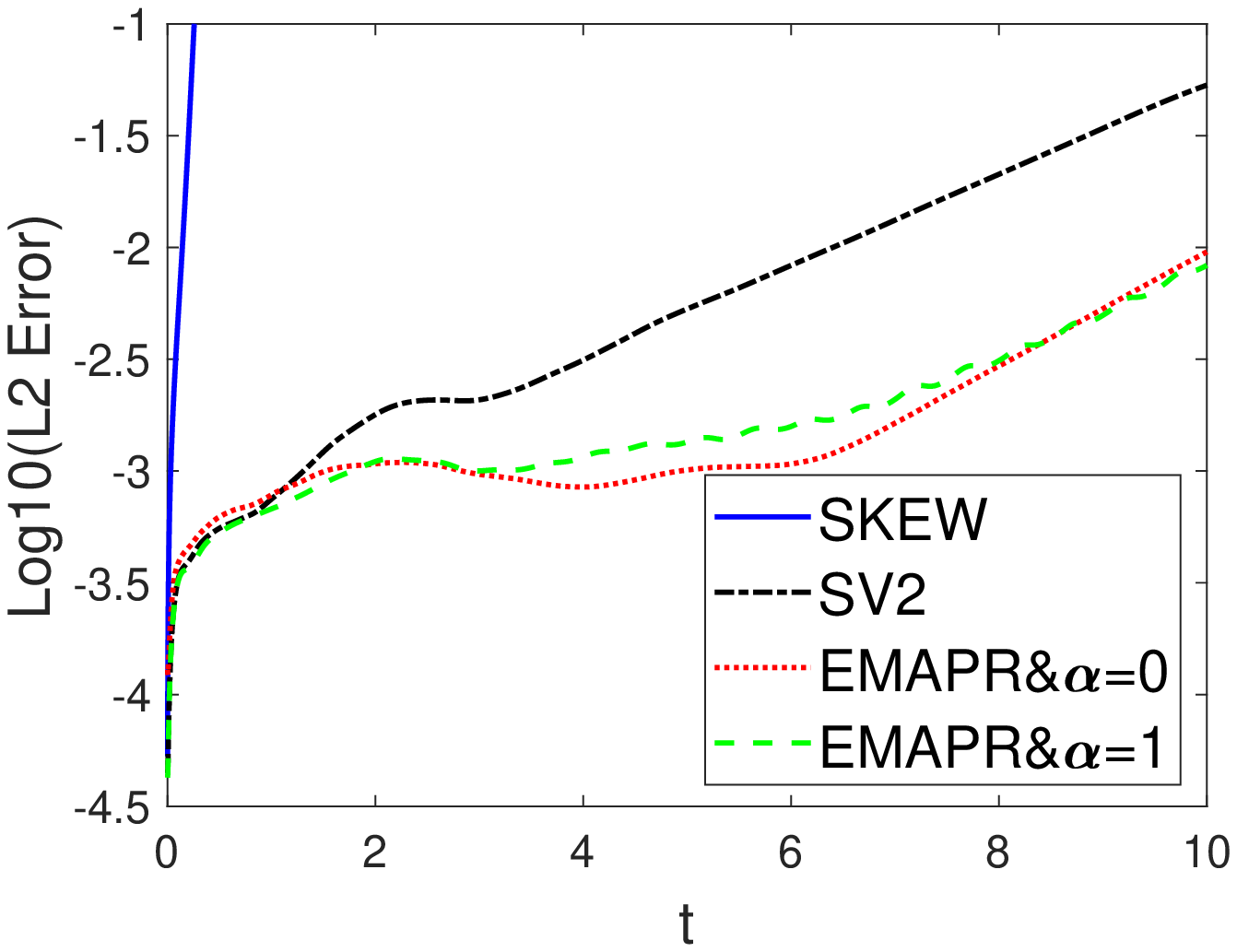}
\includegraphics[width=0.48\textwidth,height=4.2cm]{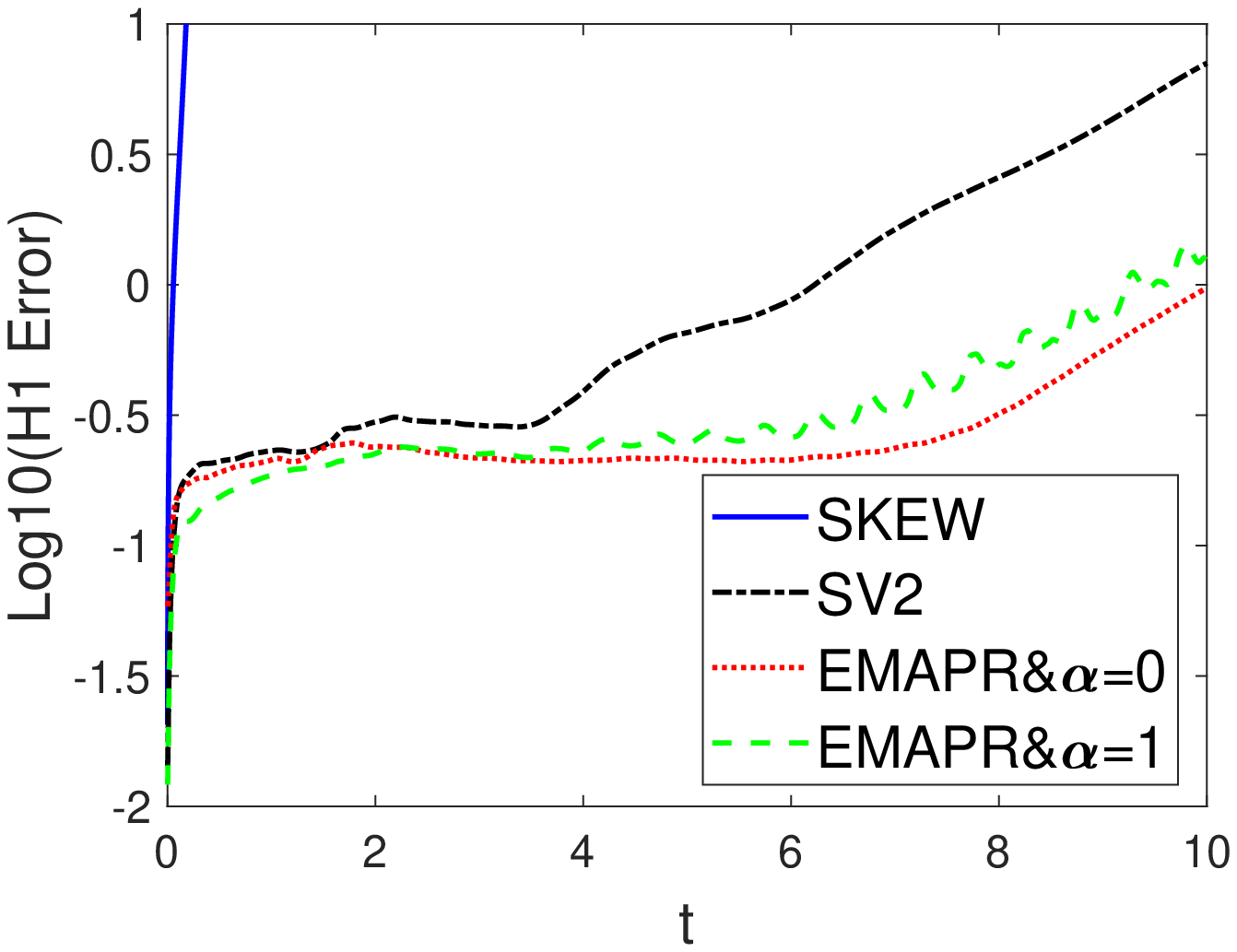}
\caption{Example 3. Plots of L2 errors, H1 errors by $P_{2}^{bubble}/P_{1}^{disc}$ or SV2 versus time.}
\label{latticefig2}
\end{figure}
\bibliographystyle{amsplain}
\bibliography{references}

\providecommand{\bysame}{\leavevmode\hbox to3em{\hrulefill}\thinspace}
\providecommand{\MR}{\relax\ifhmode\unskip\space\fi MR }
\providecommand{\MRhref}[2]{%
  \href{http://www.ams.org/mathscinet-getitem?mr=#1}{#2}
}
\providecommand{\href}[2]{#2}
\begin{thebibliography}{10}

\bibitem{Abramov2003}
R.V. Abramov and A.J. Majda, \emph{{Discrete approximations with additional
  conserved quantities: deterministic and statistical behavior}}, Methods Appl.
  Anal. \textbf{10} (2003), no.~2, 151--190.

\bibitem{Arakawa1966}
Akio Arakawa, \emph{{Computational design for long-term numerical integration
  of the equations of fluid motion: Two dimensional incompressible flow, Part
  I}}, J. Comput. Phys. \textbf{1} (1966), 119--143.

\bibitem{Arnold1992}
D.~N. Arnold and J.~Qin, \emph{{Quadratic} velocity/linear pressure {Stokes}
  elements}, Advances in Computer Methods for Partial Differential
  Equations-VII, R. Vichnevetsky, D. Knight \& G. Richter, eds., IMACS, New
  Brunswick, NJ (1992), 28--34.

\bibitem{bernardi_analysis_1985}
Christine Bernardi and Genevieve Raugel, \emph{Analysis of some finite elements
  for the {Stokes} problem}, Math. Comp. \textbf{44} (1985), no.~169, 71--79.

\bibitem{boffi_mixed_2013}
Daniele Boffi, Franco Brezzi, and Michel Fortin, \emph{Mixed finite element
  methods and applications}, Springer Series in Computational Mathematics,
  vol.~44, Springer Berlin Heidelberg, 2013.

\bibitem{brenner_mathematical_2008}
Susanne~C. Brenner and L.~Ridgway Scott, \emph{The mathematical theory of
  finite element methods}, Texts in {Applied} {Mathematics}, vol.~15, Springer
  New York, New York, NY, 2008.

\bibitem{brezzi_mixed_2005}
F.~Brezzi, T.~J.~R. Hughes, L.~D. Marini, and A.~Masud, \emph{Mixed
  discontinuous {Galerkin} methods for {Darcy} flow}, J. Sci. Comput.
  \textbf{22-23} (2005), no.~1-3, 119--145.

\bibitem{CIP2007}
Erik Burman and Miguel~A. Fern\'{a}ndez, \emph{Continuous interior penalty
  finite element method for the time-dependent {Navier}-{Stokes} equations:
  space discretization and convergence}, Numer. Math. \textbf{107} (2007),
  no.~1, 39--77.

\bibitem{Linke2011}
Michael~A. Case, Vincent~J. Ervin, Alexander Linke, and Leo~G. Rebholz, \emph{A
  connection between {Scott-Vogelius} and grad-div stabilized {Taylor-Hood}
  {FE} approximations of the {Navier-Stokes} equations}, SIAM J. Numer. Anal.
  \textbf{49} (2011), no.~4, 1461--1481.

\bibitem{Rebholz2017}
Sergey Charnyi, Timo Heister, Maxim~A. Olshanskii, and Leo~G. Rebholz,
  \emph{{On conservation laws of Navier-Stokes Galerkin discretizations}}, J.
  Comput. Phys. \textbf{337} (2017), 289--308.

\bibitem{EMAC2019}
Sergey Charnyi, Timo Heister, Maxim~A. Olshanskii, and Leo~G. Rebholz,
  \emph{Efficient discretizations for the {EMAC} formulation of the
  incompressible {Navier-Stokes} equations}, Appl. Numer. Math. \textbf{141}
  (2019), 220--233.

\bibitem{christiansen_generalized_2018}
Snorre~H. Christiansen and Kaibo Hu, \emph{Generalized finite element systems
  for smooth differential forms and {Stokes}' problem}, Numer. Math.
  \textbf{140} (2018), no.~2, 327--371.

\bibitem{Ciarlet2002The}
Philippe~G. Ciarlet, \emph{The finite element method for elliptic problems},
  Society for Industrial and Applied Mathematics, 2002.

\bibitem{Cockburn2007}
Bernardo Cockburn, Guido Kanschat, and Dominik Sch${\rm\ddot{o}}$tzau, \emph{A
  note on discontinuous {Galerkin} divergence-free solutions of the
  {Navier-Stokes} equations}, J. Sci. Comput. \textbf{31} (2007), no.~1-2,
  61--73.

\bibitem{graddiv2018}
Javier de~Frutos, Bosco Garc\'{i}a-Archilla, Volker John, and Julia Novo,
  \emph{Analysis of the grad-div stabilization for the time-dependent
  {Navier}-{Stokes} equations with inf-sup stable finite elements}, Adv.
  Comput. Math. \textbf{44} (2018), no.~1, 195--225.

\bibitem{LPS2019}
\bysame, \emph{Error analysis of non inf-sup stable discretizations of the
  time-dependent {Navier}-{Stokes} equations with local projection
  stabilization}, IMA J. Numer. Anal. \textbf{39} (2019), no.~4, 1747--1786.

\bibitem{di_pietro_mathematical_2012}
Daniele~Antonio Di~Pietro and Alexandre Ern, \emph{Mathematical aspects of
  discontinuous {Galerkin} methods}, Mathematiques et {Applications}, vol.~69,
  Springer Berlin Heidelberg, Berlin, Heidelberg, 2012.

\bibitem{Evans20132}
John~A. Evans and Thomas J.~R. Hughes, \emph{{Isogeometric
  divergence-conforming B-splines for the unsteady Navier-Stokes equations}},
  J. Comput. Phys. \textbf{241} (2013), 141--167.

\bibitem{Fix1975}
George~J. Fix, \emph{{Finite Element Models for Ocean Circulation Problems}},
  SIAM J. Appl. Math. \textbf{29} (1975), no.~3, 371--387.

\bibitem{SPS2021}
Bosco Garc\'{i}a-Archilla, Volker John, and Julia Novo, \emph{Symmetric
  pressure stabilization for equal-order finite element approximations to the
  time-dependent {Navier}-{Stokes} equations}, IMA J. Numer. Anal. \textbf{41}
  (2021), no.~2, 1093--1129.

\bibitem{gauger_high-order_2019}
Nicolas~R. Gauger, Alexander Linke, and Philipp~W. Schroeder, \emph{On
  high-order pressure-robust space discretisations, their advantages for
  incompressible high {Reynolds} number generalised {Beltrami} flows and
  beyond}, The SMAI journal of computational mathematics \textbf{5} (2019),
  89--129.

\bibitem{girault_max-norm_2015}
V.~Girault, R.~H. Nochetto, and L.~R. Scott, \emph{Max-norm estimates for
  {Stokes} and {Navier}-{Stokes} approximations in convex polyhedra}, Numer.
  Math. \textbf{131} (2015), no.~4, 771--822.

\bibitem{girault_finite_1986}
Vivette Girault and Pierre-Arnaud Raviart, \emph{Finite element methods for
  {Navier}-{Stokes} equations}, Springer {Series} in {Computational}
  {Mathematics}, vol.~5, Springer Berlin Heidelberg, Berlin, Heidelberg, 1986.

\bibitem{johnny_family_2012}
Johnny Guzm\'{a}n and Michael Neilan, \emph{{A family of nonconforming elements
  for the Brinkman problem}}, {IMA} J. Numer. Anal. \textbf{32} (2012), no.~4,
  1484--1508.

\bibitem{guzman_conforming_2013}
\bysame, \emph{Conforming and divergence-free {Stokes} elements on general
  triangular meshes}, Math. Comp. \textbf{83} (2013), no.~285, 15--36.

\bibitem{Neilan2014}
Johnny Guzm{\'{a}}n and Michael Neilan, \emph{{Conforming and divergence-free
  Stokes elements in three dimensions}}, IMA J. Numer. Anal. \textbf{34}
  (2014), 1489--1508.

\bibitem{Neilan2018}
Johnny Guzm\'{a}n and Michael Neilan, \emph{Inf-sup stable finite elements on
  barycentric refinements producing divergence--free approximations in
  arbitrary dimensions}, SIAM J. Numer. Anal. \textbf{56} (2018), no.~5,
  2826--2844.

\bibitem{John2016}
Volker John, \emph{Finite element methods for incompressible flow problems},
  Springer, New York, 2016.

\bibitem{john_finite_2018}
Volker John, Petr Knobloch, and Julia Novo, \emph{Finite elements for scalar
  convection-dominated equations and incompressible flow problems: a never
  ending story?}, Comput. Visual Sci. \textbf{19} (2018), no.~5-6, 47--63.

\bibitem{john_divergence_2017}
Volker John, Alexander Linke, Christian Merdon, Michael Neilan, and Leo~G.
  Rebholz, \emph{On the divergence constraint in mixed finite element methods
  for incompressible flows}, {SIAM} Rev. \textbf{59} (2017), no.~3, 492--544.

\bibitem{konno2011}
Juho K\"{o}nn\"{o} and Rolf Stenberg, \emph{{$H(\operatorname{div})$-conforming
  finite elements for the Binkman problem}}, Math. Models Methods Appl. Sci.
  \textbf{21} (2011), no.~11, 2227--2248.

\bibitem{Lederer2016}
Philip~L. Lederer, \emph{Pressure-robust discretizations for {Navier-Stokes}
  equations: Divergence-free reconstruction for {Taylor-Hood} elements and high
  order hybrid discontinuous {Galerkin} methods}, Master's thesis, Vienna
  Technical University, Vienna, 2016.

\bibitem{lederer_hybrid_2019}
Philip~L. Lederer, Christoph Lehrenfeld, and Joachim Sch\"{o}berl, \emph{Hybrid
  {discontinuous} {Galerkin} methods with relaxed \textit{{H}}(div)-conformity
  for incompressible flows. {Part} {II}}, ESAIM: Math. Model. Numer. Anal.
  \textbf{53} (2019), no.~2, 503--522.

\bibitem{Linke2017}
Philip~L. Lederer, Alexander Linke, Christian Merdon, and Joachim Sch{\"o}berl,
  \emph{Divergence-free reconstruction operators for pressure-robust {Stokes}
  discretizations with continuous pressure finite elements}, SIAM J. Numer.
  Anal. \textbf{55} (2017), no.~3, 1291--1314.

\bibitem{Linke2014on}
Alexander Linke, \emph{{On the role of the Helmholtz decomposition in mixed
  methods for incompressible flows and a new variational crime}}, Comput.
  Methods Appl. Mech. Engrg. \textbf{268} (2014), 782--800.

\bibitem{linke_robust_2016}
Alexander Linke, Gunar Matthies, and Lutz Tobiska, \emph{Robust arbitrary order
  mixed finite element methods for the incompressible {Stokes} equations with
  pressure independent velocity errors}, ESAIM: Math. Model. Numer. Anal.
  \textbf{50} (2016), no.~1, 289--309.

\bibitem{linke2016}
Alexander Linke and Christian Merdon, \emph{Pressure-robustness and discrete
  {Helmholtz} projectors in mixed finite element methods for the incompressible
  {Navier}-{Stokes} equations}, Comput. Methods Appl. Mech. Engrg. \textbf{311}
  (2016), 304--326.

\bibitem{linke2019jcp}
Alexander Linke and Leo~G. Rebholz, \emph{Pressure-induced locking in mixed
  methods for time-dependent {Navier}-{Stokes} equations}, J. Comput. Phys.
  \textbf{388} (2019), 350--356.

\bibitem{Neilan2015}
Michael Neilan, \emph{{Discrete and conforming smooth de Rham complexes in
  three dimensions}}, Math. Comp. \textbf{84} (2015), 2059--2081.

\bibitem{Neilan2021}
Michael Neilan and Baris Otus, \emph{Divergence-free {Scott--Vogelius} elements
  on curved domains}, SIAM J. Numer. Anal. \textbf{59} (2021), no.~2,
  1090--1116.

\bibitem{Olshanskii2004}
M.~A. Olshanskii and A.~Reusken, \emph{{Grad-div stabilization for Stokes
  equations}}, Math. Comp. \textbf{73} (2004), 1699--1718.

\bibitem{Rebholz2020}
Maxim~A. Olshanskii and Leo~G. Rebholz, \emph{{Longer time accuracy for
  incompressible Navier-Stokes simulations with the EMAC formulation}}, Comput.
  Methods Appl. Mech. Engrg. \textbf{372} (2020), 113369.

\bibitem{Palha2017}
A.~Palha and M.~Gerritsma, \emph{{A mass, energy, enstrophy and vorticity
  conserving (MEEVC) mimetic spectral element discretization for the 2D
  incompressible Navier-Stokes equations}}, J. Comput. Phys. \textbf{328}
  (2017), 200--220.

\bibitem{Rebholz2007}
Leo~G. Rebholz, \emph{An energy- and helicity-conserving finite element scheme
  for the {Navier-Stokes} equations}, SIAM J. Numer. Anal. \textbf{45} (2007),
  no.~4, 1622--1638.

\bibitem{Rhebergen2020}
Sander Rhebergen and Garth~N. Wells, \emph{An embedded-hybridized discontinuous
  {Galerkin} finite element method for the {Stokes} equations}, Comput. Methods
  Appl. Mech. Engrg. \textbf{358} (2020), 112619.

\bibitem{schroeder_towards_2018}
Philipp~W. Schroeder, Christoph Lehrenfeld, Alexander Linke, and Gert Lube,
  \emph{Towards computable flows and robust estimates for inf-sup stable {FEM}
  applied to the time-dependent incompressible {Navier-Stokes} equations}, SeMA
  \textbf{75} (2018), no.~4, 629--653.

\bibitem{wang2008}
Junping Wang, Xiaoshen Wang, and Xiu Ye, \emph{Finite element methods for the
  {Navier-Stokes} equations by {$\textit{H} (\operatorname{div})$} elements},
  J. Comput. Math. \textbf{26} (2008), 410--436.

\bibitem{wang_new_2007}
Junping Wang and Xiu Ye, \emph{New finite element methods in computational
  fluid dynamics by ${H}(\operatorname{div})$ elements}, SIAM J. Numer. Anal.
  \textbf{45} (2007), no.~3, 1269--1286.

\bibitem{Zhang2005}
Shangyou Zhang, \emph{A new family of stable mixed finite elements for the {3D}
  {Stokes} equations}, Math. Comp. \textbf{74} (2005), no.~250, 543--554.

\end{thebibliography}
\end{document}